\theoremstyle{plain}
\newtheorem{theorem}{Theorem}[subsection]
\newtheorem{proposition}[theorem]{Proposition}
\newtheorem{corollary}[theorem]{Corollary}
\newtheorem{lemma}[theorem]{Lemma}
\newcommand{\PP}{\mathbb{P}}
\theoremstyle{definition}
\newtheorem{definition}[theorem]{Definition}
\newtheorem{remark}[theorem]{Remark}
\def\lra{\longrightarrow}
\def\g{\gamma}
\def\cB{{\cal B}}
\def\cC{{\cal C}}
\def\cE{{\cal E}}
\def\cR{{\cal R}}
\def\cS{{\cal S}}
\def\fH{{\frak H}}
\def\cM{{\cal M}}
\def\cO{\mathcal {O}}
\def\Z{{\mathbb Z}}
\def\R{{\mathbb R}}
\def\C{{\mathbb C}}
\def\N{{\mathbb N}}
\def\Q{{\mathbb Q}}
\def\Hom{\text{Hom}}
\def\Aut{\text{Aut}}
\def\ker{\text{ker}}
\def\Ker{\text{Ker}}
\def\Vect0{\text{Vect}_0}
\def\bx {{\bf x}}
\def\by {{\bf y}}
\def\ulambda {{\underline{\lambda}}}
\def\sign {{\rm sign}}
\def\BP{{\mathbf{P}}}
\def\l{\lambda} 
\def\d{\delta}
\def\g{\gamma}
  \newcommand{\be}{\begin{eqnarray*}}
  \newcommand{\ee}{\end{eqnarray*}}
  \newcommand{\bea}{\begin{eqnarray}}
  \newcommand{\eea}{\end{eqnarray}}
  \newcommand{\Res}{\operatorname{Res}}
  \newcommand{\GL}{\operatorname{GL}}
  \newcommand{\SL}{\operatorname{SL}}
 \newcommand{\ba}{\mathbf{a}}
  \newcommand{\bb}{\mathbf{b}}
\newcommand{\tfH}{\tilde{\mathfrak{H}}}
\newcommand{\bc}{{\bf c}}
\newcommand{\Dist}{\operatorname{Dist}}
\newcommand{\Spec}{\operatorname{Spec}}
\definecolor{red}{RGB}{255,0,0}
\begin{document}

\title{Modular Symbols with Values in Beilinson-Kato Distributions}

\author{Cecilia Busuioc\footnote{\texttt{celiabusuioc@ucl.ac.uk}, Department of Mathematics, 
University College London, 
25 Gordon Street, London, WC1H 0AY}, Jeehoon Park\footnote{ \texttt{jpark.math@gmail.com}, QSMS, Seoul National University, 1 Gwanak-ro, Gwanak-gu, Seoul 08826, South Korea}, Owen Patashnick\footnote{ \texttt{owen.patashnick@kcl.ac.uk}, Department of Mathematics, King's College London, London, UK; School of Mathematics, University of Bristol, Bristol, UK; The Heilbronn Institute for Mathematical Research, Bristol, UK}, and Glenn Stevens\footnote{ \texttt{ghs@math.bu.edu}, Department of Mathematics, Boston University, Boston, MA 02215}}

\date{\today}

\maketitle
\begin{abstract}
		For each integer $n\geq 1$, we construct a $\GL_n(\Q)$-invariant modular symbol $\bm\xi_n$ with coefficients in a space of distributions that takes values in the Milnor $K_n$-group of the modular function field.
	The Siegel distribution $\bm\mu$ on $\Q^2$, with values in the modular function field, serves as the building block for $\bm\xi_n$; we define $\bm\xi_n$ essentially by taking the $n$-Steinberg product of $\bm\mu$.  
	The most non-trivial part of this construction is the cocycle property of $\bm\xi_n$; we prove it by using an induction on $n$ based on the first two cases $\bm\xi_1$ and $\bm\xi_2$; the first case is trivial, and the second case essentially follows from the fact that Beilinson-Kato elements in the Milnor $K_2$-group modulo torsion satisfy the Manin relations.

\end{abstract}

\tableofcontents

\section{Introduction}


The aim of this note is to 
define modular symbols with coefficients in $K$-theoretic-valued distributions.  Although the various elements that go into this definition appear in various places in the literature, to our knowledge no-one has yet packaged them in this way.  
In future work we hope to show  that this point of view may be useful for describing new and known Euler systems i.e. ``norm-compatible systems of cohomology classes" 
(especially that of Kato \cite{Kat}) via the distribution property of our modular symbols, and relating them to computing special values of $L$-functions and zeta functions.

The Siegel units on the upper half plane studied in \cite{Kat} by K. Kato play a key role in his construction of an Euler system for elliptic curves; see also \cite{Sch} for related Siegel-Kato functions on the universal elliptic curve.
Two key features of the Siegel units are known, namely,

(S-1) the Siegel units satisfy a  distribution property; see Kubert-Lang \cite{KubertLang} for details,

(S-2) the Siegel units satisfy the Manin relations in the Milnor $K_2$-group of a modular curve; 
see Brunault \cite{Bru} and (independently) Goncharov \cite{Gon}.

This distribution property and the Manin relations suggest the  existence of the aforementioned $\GL_2(\Q)$-invariant modular symbol. The goal of this article is to construct such a modular symbol explicitly.

We now recall the ``classical" notion of modular symbol over a subgroup $G$ of $\GL_2(\Q)$ taking values in a contravariant $G$-module $M$.   
For this we let $\Delta_0 :=\operatorname{Div}^0(\BP^1(\Q))$ which we endow with the action induced by the usual covariant action of $\GL_2(\Q)$ on  
$\BP^1(\Q)$.   An additive homomorphism 
$$
\xi : \Delta_0 \lra M
$$ 
 is said to be an $M$-valued modular symbol over $G$ if it satisfies the condition 
 $$
 \gamma^\ast \cdot \xi(\gamma D) = \xi(D)
 $$ 
 for all $D\in \Delta_0$ and all $\gamma\in G$.  We will also call this a $G$-invariant modular symbol, with the understanding that $G$ acts on $\Hom_\Z(\Delta_0, M)$ on the right by $(\xi,\gamma) \longmapsto \xi | \gamma$ where $\xi|\gamma \in \Hom_\Z(\Delta_0, M)$ is given by
$$
(\xi | \gamma)(D) = \gamma^\ast \cdot \xi ( \gamma D).
$$
Thus $\xi$ is said to be a modular symbol over $G$ if and only if it is fixed by $G$ with the above-defined right (i.e.\ contravariant) action.
Finally we will denote the group of all $M$-valued modular symbols over $G$ by $\operatorname{Symb}_G(M)$.

\noindent

For an integer $N\geq 1$, let $\Gamma(N)$ denote the modular congruence subgroup of full level $N$ structure and $\fH/\Gamma(N)$ the associated modular curve. 
It is well known that there is a smooth affine algebraic curve $Y(N)$ over $\C$ such that the associated analytic space $Y(N)(\C)$ is isomorphic to $\fH/\Gamma(N)$; see Appendix \ref{appendix} for details. Let $k_N=\C(Y(N))$ be the field of rational functions on $Y(N)$. 
For each $n \geq 1$, let $K_n^M(k_N)$ be the $n$th Milnor $K$-group of $k_N$. Let $k_\infty$ be the union of the $k_N$'s and let $\tilde K^M_2(k_\infty)$ be the associated direct limit of the groups
\be
\tilde K^M_2(k_N):=K^M_2(k_N) \otimes_\Z \Q.
\ee
We sometimes use the notation $ r \{x,y\}= \{x,y\} \otimes_\Z r$ where $r \in \Q$ and $x,y \in k_\infty^\times$.

%
%

Using this language, our main result is the construction of an explicit modular symbol
\be
{\bm\xi} \in \operatorname{Symb}_{\GL_2(\Q)} \biggl(\Dist(M_{2\times2}(\Q), \tilde K^M_2(k_\infty))\biggl)
\ee
using Siegel units.
Here $\Dist(M_{2\times2}(\Q), \tilde K^M_2(k_\infty))$ is the space of $\tilde K^M_2(k_\infty)$-valued distributions on the space $M_{2\times2}(\Q)$ of $2\times 2$ matrices with coefficients in $\Q$.


There is an important technical point regarding (S-2). For our construction of ${\bm\xi}$, we need the Manin relations for all $N\geq 1$.  Unfortunately \cite[Theorem 1.4]{Bru} only proves them for $N$ not divisible by $3$, and we find that Goncharov's claimed proof for all $N$ in \cite{Gon} has a gap.  (See Section \ref{GonchMethod} for more details)

Our key technical contribution is to salvage his argument and  actually construct symbols of theta functions on Milnor $K_3$ of universal elliptic curves whose images under the appropriate residue map give the Manin relations by Suslin's reciprocity.

We should also point out that \cite{Bru2} contains a proof different from ours which works for $\GL_2(\Q)$ uniformly for all levels $N$.

Finally, we also provide a generalization of ${\bm\xi}$ to a $\GL_n(\Q)$-setting, using the language of Ash-Rudolph \cite{AR} for $\GL_n(\Q)$-invariant modular symbols. More precisely, for each $n \geq 1$, we explicitly construct a $\GL_n(\Q)$-invariant modular symbol ${\bm\xi}_n$ with values in $\Dist (M_{n\times2}(\Q), \tilde K^M_n(k_\infty))$, which equals ${\bm\xi}$ when $n=2$. Here $M_{n\times2}(\Q)$ is the vector space of $n\times 2$ matrices with coefficients in $\Q$.

In the recent paper \cite{SV}, Sharifi and Venkatesh use motivic complexes (with theta functions similar to our article) to construct a $\GL_2(\Z)$-group cocycle with values in $K_2$-group of a modular curve. This construction is  closely related to our $\GL_2(\Q)$-invariant modular symbol $\bm\xi$ as discussed in Theorem \ref{modularsymbol}. Peter Xu generalized the method of \cite{SV} to a $\GL_n(\Z)$-setting: he sketched a construction of a variation of a $\GL_n(\Z)$-version of \cite{SV}, which he calls $\Theta_{\cC}^M$ in \cite[Section 4.2.2]{Xu}. 
It would be a good project to compare this generalization with our work.


 Bergeron, Charollois, and Garcia \cite{BCG} interpreted a construction of Eisenstein cohomology classes of $\GL_n$ as a transgression of the Euler class of a real vector bundle with $\SL_n(\Z)$-action. Peter Xu worked out in \cite{Xu} that ``a regulator" of his $\Theta_{\cC}^M$ is essentially the same as the Eisenstein cohomology class of \cite{BCG}.
 

Also note that  in \cite{FK}, Fukaya and Kato constructed ``zeta maps" (including $p$-adic \'etale regulators) to $K_2$ of modular curves and computed the image of the the Steinberg symbol of Siegel units under these maps,
which was a crucial part in their work on Sharifi's conjecture.




Under the well known isomorphisms $K_n^M(F) \cong \mathrm{CH}^n(F, n)$ for $F$ a field (see for example \cite[p. 177]{Totaro}), we have actually  constructed modular symbols with values in the non-torsion parts of certain motivic cohomology groups. We think it would be 
interesting to explore the putative existence of, and the putative properties of, modular symbols with values in more general motivic cohomology groups, 
 as well as relating them via  realizations to other well-known modular symbols with values in interesting $\Gamma$-modules (for $\Gamma$ a discrete subgroup of $\GL_n(\Q)$ which fixes a lattice in $\Q^n$).  
\subsection{Statement of theorems and the Manin relations}


We start by defining Siegel units of level $N$ following Kato (\cite[subsection 1.9]{Kat}). Let $A(N):=\frac{1}{N}\Z^2/\Z^2$.

\begin{definition}\label{Siegelunits}\cite[Definition 1.1]{Bru}
For $\ba = (a_1 + \Z, a_2 +\Z)\in A(N)$ with $0\le a_1,a_2 < 1$, we define $g_\ba = 1$ if $\ba = {\bf 0}$ and otherwise define
$$
g_\ba(\tau) := q^{\frac{1}{2}B_2(a_1)} \prod_{n\ge 0} \left(1 - q^{n+a_1} e^{2\pi i a_2} \right) \prod_{n\ge 1} \left(1 - q^{n-a_1} e^{-2\pi i a_2} \right)
$$
where $q=e^{2 \pi i \tau}$ and $B_2(x)=x^2-x+\frac{1}{6}.$ We also use the notation $g_{a_1,a_2}=g_\ba$ later in Definition \ref{glnm}.
\end{definition}

Let $\cR_N$ be the subring of $k_N$ consisting of functions that are holomorphic on the upper half-plane and let
$$
\cR_\infty := \bigcup_N{\cR_N}.
$$
Then $g_\ba^{12N}$ is an element of $\cR_N^\times$ (cf. \cite{KubertLang} p. 182).
Therefore we can regard $g_\ba$ as a well-defined element ``$g_\ba^{12N} \otimes_\Z \frac{1}{12N}$" of $\tilde K^M_1(k_N):= K^M_1(k_N) \otimes_\Z \Q \simeq k_N^\times \otimes_\Z \Q$.

Let $\cS(\Q^2)$ be the abelian group of $\Z$-valued locally constant functions on $\Q^2$ with bounded support; see \eqref{testftn} for details.
We let $\Q^\times$ act on $\cS(\Q^2)$ and $k_\infty$ through the action of $\GL_2(\Q)$ via the diagonal embedding of $\Q^\times$ in $\GL_2(\Q)$. 
Thus $\Q^\times$ acts trivially on $k_\infty$ and it acts on $\cS(\Q^2)$ as follows:
\be
(t^* f)(x)=f(tx), \quad t \in \Q^\times, x \in \Q^2.
\ee

The key feature of the Siegel units is the distribution property:
for all $0\neq t \in \Z$ and all $\ba \in A(N)$ we have
\be
g_\ba= \sum_{\substack{\bb \in A(N)\\ t \bb=\ba}} g_\bb,
\ee
which can be reformulated as the following proposition.
\begin{proposition}[The Siegel distribution]\cite[Theorem 1.8]{Colmez}\label{Sieg}
There is a unique $\Q^\times$-invariant (i.e. $t \in \Q^\times$, $\bm\mu(t^* f)=\bm\mu(f)$) homomorphism\footnote{This $\Q^\times$-invariance plays a role in defining $\GL_2(\Q)$-invariant modular symbols: see the first sentence of the proof of Proposition \ref{wdefined}.}
$$
{\bm\mu} : \cS(\Q^2) \lra \tilde K^M_1(k_\infty) 
$$
for which 
$$
{\bm\mu}([\ba+\Z^2]) = g_{\ba}\ \hbox{\rm for all ${\bf a}\in \Q^2/\Z^2$}
$$
where $[\ba+\Z^2]$ is the characteristic function on $\bf a +\Z^2$.
\end{proposition}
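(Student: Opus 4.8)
The plan is to work with the presentation of $\cS(\Q^2)$ by characteristic functions of lattice cosets. As recorded in \eqref{testftn}, every element of $\cS(\Q^2)$ is, for a suitable $c\in\Z_{>0}$, constant on the cosets of $c\Z^2$ and supported on finitely many of them; hence $\cS(\Q^2)$ is generated as an abelian group by the functions $[\bx+c\Z^2]$ (with $\bx\in\Q^2$, $c\in\Z_{>0}$), and all relations among these generators are consequences of the identifications $[\bx+c\Z^2]=[\bx'+c\Z^2]$ for $\bx\equiv\bx'\pmod{c\Z^2}$ together with the \emph{refinement relations} $[\bx+c\Z^2]=\sum_{\by}[\by+c'\Z^2]$ whenever $c\mid c'$, the sum running over representatives $\by$ of the cosets of $c'\Z^2$ inside $\bx+c\Z^2$. (Equivalently $\cS(\Q^2)=\varinjlim_{c}F_c$, with $F_c$ free abelian on $\Q^2/c\Z^2$ and transition maps given by refinement.) For uniqueness, note that $[\bx+c\Z^2]=t^{\ast}[t\bx+\Z^2]$ for $t=1/c$, since $(t^{\ast}[t\bx+\Z^2])(\by)=1$ iff $t\by\in t\bx+\Z^2$ iff $\by\in\bx+c\Z^2$; so any $\Q^{\times}$-invariant $\bm\mu$ with $\bm\mu([\ba+\Z^2])=g_{\ba}$ is forced to satisfy $\bm\mu([\bx+c\Z^2])=g_{c^{-1}\bx}$ (read modulo $\Z^2$) on a generating set, and hence is unique.

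For existence I would simply declare $\bm\mu([\bx+c\Z^2]):=g_{c^{-1}\bx\bmod\Z^2}$, where $g_{\ba}$ is the Siegel unit of Definition \ref{Siegelunits} (note $c^{-1}\bx\in A(Nc)$ once $\bx\in\tfrac1N\Z^2$, so this lies in $\tilde K^M_1(k_\infty)$), and then check the two families of relations. The first is immediate, since $c^{-1}\bx\bmod\Z^2$ depends only on $\bx\bmod c\Z^2$. The refinement relations are the heart of the matter: writing $k=c'/c$, as $\by$ runs over the cosets of $c'\Z^2$ in $\bx+c\Z^2$ the classes $(c')^{-1}\by\bmod\Z^2$ run precisely over $\{\bm\beta\in\Q^2/\Z^2:k\bm\beta=c^{-1}\bx\bmod\Z^2\}$, so that $\sum_{\by}\bm\mu([\by+c'\Z^2])=\sum_{k\bm\beta=c^{-1}\bx}g_{\bm\beta}$, and this equals $g_{c^{-1}\bx}=\bm\mu([\bx+c\Z^2])$ by exactly the distribution property of Siegel units recalled above (with $t=k$, the sum taken over all preimages). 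Thus $\bm\mu$ is a well-defined homomorphism, and taking $c=1$ recovers $\bm\mu([\ba+\Z^2])=g_{\ba}$.

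Finally I would verify $\Q^{\times}$-invariance $\bm\mu(t^{\ast}f)=\bm\mu(f)$ on the generators $f=[\bx+c\Z^2]$. For $t=1/s$ with $s\in\Z_{>0}$ one has $t^{\ast}f=[s\bx+sc\Z^2]$, and $\bm\mu([s\bx+sc\Z^2])=g_{(sc)^{-1}s\bx}=g_{c^{-1}\bx}=\bm\mu(f)$; for $t=s\in\Z_{>0}$ one has $t^{\ast}f=[s^{-1}\bx+s^{-1}c\Z^2]$, and refining this box along the index-$s^2$ inclusion $c\Z^2\subseteq s^{-1}c\Z^2$ reduces $\bm\mu(t^{\ast}f)=\bm\mu(f)$ to the distribution property again by the same computation as before; a general $t\in\Q_{>0}$ is a product of these two types, and $t=-1$ acts trivially on $\bm\mu$ because $g_{-\bm\alpha}$ and $g_{\bm\alpha}$ differ by a root of unity, which is trivial in $\tilde K^M_1(k_\infty)=k_\infty^{\times}\otimes_{\Z}\Q$. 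I expect the main obstacle to be organizational rather than deep: pinning down, via \eqref{testftn}, that refinement relations generate \emph{all} relations in $\cS(\Q^2)$ (so that the formula for $\bm\mu$ is literally well defined), and then keeping careful track of lattice indices in the invariance step, where the one subtlety is that $\Q^{\times}$ acts only through the scalar lattices $\lambda\Z^2$ while $\cS(\Q^2)$ involves all lattices, so invariance too must be routed through the Siegel distribution relation after a refinement. All genuine content sits in that distribution property, which we may take as given.
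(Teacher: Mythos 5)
The paper does not give a proof of Proposition \ref{Sieg}; it cites \cite[Theorem 1.8]{Colmez}, and the distribution property of Siegel units is simply recalled as the key input. So there is no ``paper's own proof'' to compare against. Your argument is a correct, self-contained derivation of the statement from that distribution property, and it is the natural one.

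The structural claims all check out: $\cS(\Q^2)=\varinjlim_{c}F_c$ with $F_c$ the free abelian group on $\Q^2/c\Z^2$ and transition maps given by refinement, because every lattice $L\subseteq\Q^2$ contains some $c\Z^2$, and any $\Z$-linear relation among cosets of various lattices reduces, after refinement to a common $c\Z^2$, to a relation among distinct cosets of a single lattice, which must be trivial. The uniqueness argument via $[\bx+c\Z^2]=(1/c)^{\ast}[c^{-1}\bx+\Z^2]$ pins down the value on every generator. For existence, your refinement check is the distribution property $g_{\ba}=\sum_{k\bm\beta=\ba}g_{\bm\beta}$ after the change of variables $\bm\beta=(c')^{-1}\by$, and the two halves of the $\Q^{\times}$-invariance ($t=1/s$ directly, $t=s$ via refinement through $c\Z^2\subseteq s^{-1}c\Z^2$, $t=-1$ because $g_{-\ba}/g_{\ba}$ is a root of unity hence torsion in $k_\infty^\times\otimes\Q$) are both sound. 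The only thing worth tightening in a final write-up is to say explicitly that the generators $[\bx+c\Z^2]$ with $c\in\Z_{>0}$ suffice because the lattices $c\Z^2$ are cofinal, and to write out the $t=s$ case fully rather than gesturing at it; but there is no gap in the mathematics.
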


If $a_1, \cdots, a_n \in \Q^2$, let $A(a_1, \cdots, a_n) \in M_{n\times 2}(\Q)$ be the $n\times2$ matrix whose rows are $a_1, \cdots, a_n$.
In the following we will identify $\cS(\Q^2)\otimes\cdots \otimes \cS(\Q^2)$ with  $\cS(M_{n\times 2}(\Q))$ by identifying $\phi_1\otimes \cdots \otimes \phi_n\in \cS(\Q^2)\otimes \cdots \otimes\cS(\Q^2)$ with the test function on $M_{n\times 2}(\Q)$ defined by
\bea \label{identify}
(\phi_1\otimes \cdots \otimes \phi_n) : A(a_1,\cdots, a_n)\longmapsto \phi_1(a_1)\cdots \phi_n(a_n)\quad\hbox{\rm for all $a_i\in \Q^2$}.
\eea
For $\delta\in \Dist(M_{n\times 2}(\Q),\tilde K^M_n(k_\infty))$ and $\alpha\in \GL_n(\Q)$, we define the group action 
\be
\alpha^*\delta \in \Dist(M_{n \times 2}(\Q),\tilde K^M_n(k_\infty)), \quad 
(\alpha^* \delta)(\phi) = \delta((\alpha^{-1})^{*}\phi)
\ee
for an arbitrary element $\phi\in \cS(M_{n\times 2}(\Q))$, where $\alpha^*(\phi)(A) = \phi(\alpha A)$ 
for any $A\in M_{n\times 2}(\Q)$.

\begin{definition}\cite[1.4.2]{Colmez}\label{BKdist}
We define the Beilinson-Kato distribution 
$$
{{\bm\mu}^2} \in \Dist(M_{2\times2}(\Q),\tilde K^M_2(k_\infty)), \quad
{{\bm\mu}^2}(\phi_1\otimes \phi_2) := \left\{\,{\bm\mu}(\phi_1),{\bm\mu}(\phi_2)\,\right\}.
$$
\end{definition}

\begin{theorem}\label{modularsymbol}
There is a unique $\GL_2(\Q)$-invariant modular symbol
$$
{\bm\xi} : \Delta_0 \lra \Dist(M_{2\times2}(\Q),\tilde K^M_2(k_\infty))
$$
with the property that 
$${\bm\xi}\biggl((0)-(\infty)\biggr) = {{\bm\mu}^2}. $$
In other words, $\bm\xi$ is a group homomorphism satisfying
\be
\bm\xi (\g D) = (\g^{-1})^* \bm\xi(D), \quad D \in \Delta_0, \g \in \GL_2(\Q).
\ee
\end{theorem}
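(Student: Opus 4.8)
The plan is to reduce Theorem~\ref{modularsymbol} to an elementary presentation of the group $\Delta_0$ together with the Manin relations for Beilinson--Kato elements, i.e.\ feature (S-2). Recall that $\Delta_0=\operatorname{Div}^0(\BP^1(\Q))$ is generated as an abelian group by the divisors $(r)-(s)$ with $r\neq s\in\BP^1(\Q)$, subject only to the three-term relations
\be
\bigl((r)-(s)\bigr)+\bigl((s)-(t)\bigr)+\bigl((t)-(r)\bigr)=0
\ee
indexed by triples of pairwise distinct points $r,s,t$ (the ``two-term'' relations $(r)-(s)=-((s)-(r))$ being consequences). Since $\GL_2(\Q)$ acts transitively on such triples, every three-term relation is a $\GL_2(\Q)$-translate of the one attached to $(0,\infty,1)$.

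\emph{Uniqueness.} A $\GL_2(\Q)$-invariant modular symbol $\bm\xi$ with $\bm\xi((0)-(\infty))=\bm\mu^2$ necessarily satisfies $\bm\xi\bigl((\gamma 0)-(\gamma\infty)\bigr)=\bm\xi\bigl(\gamma\cdot((0)-(\infty))\bigr)=(\gamma^{-1})^{*}\bm\mu^2$ for every $\gamma\in\GL_2(\Q)$, and as $\gamma$ varies these divisors run over all generators of $\Delta_0$; hence $\bm\xi$ is determined.

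\emph{Existence.} For an ordered pair $r\neq s$ in $\BP^1(\Q)$ put $\Phi(r,s):=(\gamma^{-1})^{*}\bm\mu^2\in\Dist(M_{2\times 2}(\Q),\tilde K^M_2(k_\infty))$, where $\gamma\in\GL_2(\Q)$ is any matrix with $\gamma 0=r$ and $\gamma\infty=s$. Two such matrices differ on the right by a diagonal matrix (the stabilizer of the divisor $(0)-(\infty)$ in $\GL_2(\Q)$ being the group of diagonal matrices), so $\Phi(r,s)$ is well defined as soon as $\bm\mu^2$ is invariant under all diagonal matrices; this is exactly where the $\Q^\times$-invariance of $\bm\mu$ (Proposition~\ref{Sieg}) is used, since a diagonal matrix acts on $\phi_1\otimes\phi_2\in\cS(\Q^2)\otimes\cS(\Q^2)$ by rescaling $\phi_1$ and $\phi_2$ independently, and $\bm\mu^2(\phi_1\otimes\phi_2)=\{\bm\mu(\phi_1),\bm\mu(\phi_2)\}$ is unaffected by rescaling in either slot (Definition~\ref{BKdist}). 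Directly from the definition, $\Phi$ is $\GL_2(\Q)$-equivariant: $\Phi(\gamma r,\gamma s)=(\gamma^{-1})^{*}\Phi(r,s)$. To promote $\Phi$ to a homomorphism $\bm\xi\colon\Delta_0\to\Dist(M_{2\times 2}(\Q),\tilde K^M_2(k_\infty))$ with $\bm\xi((r)-(s))=\Phi(r,s)$, the presentation above, together with equivariance and the transitivity of the $\GL_2(\Q)$-action on triples, reduces everything to the single identity
\be
\bm\mu^2+(\tau^{-1})^{*}\bm\mu^2+(\tau^{-2})^{*}\bm\mu^2=0\quad\text{in }\Dist(M_{2\times 2}(\Q),\tilde K^M_2(k_\infty)),
\ee
where $\tau\in\GL_2(\Q)$ satisfies $\tau(0)=\infty$, $\tau(\infty)=1$, $\tau(1)=0$, so that the left side is $\Phi(0,\infty)+\Phi(\infty,1)+\Phi(1,0)$. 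Granting this identity, the resulting $\bm\xi$ is a group homomorphism, is $\GL_2(\Q)$-invariant because $\Phi$ is equivariant, and satisfies $\bm\xi((0)-(\infty))=\Phi(0,\infty)=\bm\mu^2$.

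The main obstacle is precisely the displayed three-term identity for $\bm\mu^2$, which is the Manin-relation property of the Beilinson--Kato distribution. I would prove it by testing both sides on a spanning set of functions in $\cS(M_{2\times 2}(\Q))=\cS(\Q^2)\otimes\cS(\Q^2)$: the distribution property and $\Q^\times$-invariance of $\bm\mu$ reduce the check to elementary tensors $\phi_1\otimes\phi_2$ of characteristic functions of cosets $\ba+\Z^2$, on which $\bm\mu$ takes the value $g_\ba$, and one verifies that $\tau^{*}$ and $(\tau^{2})^{*}$ carry such a tensor again to an elementary tensor of the same shape. On these functions the identity becomes a three-term relation among Steinberg symbols $\{g_\ba,g_\bb\}$ of Siegel units in $\tilde K^M_2(k_N)$, one for each level $N\geq 1$ --- that is, feature (S-2), proved by Brunault \cite{Bru} for $N$ prime to $3$ and by Goncharov \cite{Gon} for all $N$. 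Since Goncharov's argument has the gap flagged in the introduction when $3\mid N$, the genuine work lies in repairing it; this is carried out in Section~\ref{GonchMethod} via symbols of theta functions in the Milnor $K_3$-group of the universal elliptic curve together with Suslin reciprocity. Modulo that input, the construction of $\bm\xi$ is complete.
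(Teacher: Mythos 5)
Your plan follows the paper's own strategy closely: translate $\bm\mu^2$ by elements of $\GL_2(\Q)$, use the $T(\Q)$-invariance of $\bm\mu^2$ (which comes from the $\Q^\times$-invariance of the Siegel distribution) to make $\Phi(r,s)$ well defined, and reduce the cocycle property to the Manin relations for $\bm\mu^2$ supplied by Theorem \ref{sres}. This matches the paper's construction via \eqref{BKD} and Proposition \ref{wdefined}.

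However, your claimed presentation of $\Delta_0$ is incorrect, and this leaves a genuine gap. You assert that $\Delta_0$ is generated by the $(r)-(s)$ subject \emph{only} to the three-term relations $\bigl((r)-(s)\bigr)+\bigl((s)-(t)\bigr)+\bigl((t)-(r)\bigr)=0$ over pairwise distinct triples, with the two-term relations $(r)-(s)=-\bigl((s)-(r)\bigr)$ being consequences. They are not: in the free abelian group on ordered pairs $[r,s]$ modulo those three-term relations, the element $[r,s]+[s,r]$ is nonzero --- the homomorphism sending every generator $[r,s]$ to $1\in\Z/3\Z$ annihilates every three-term relation (it maps to $3=0$) yet sends $[r,s]+[s,r]$ to $2\neq 0$. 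Consequently, besides the single three-term identity you isolate, your construction also requires the two-term identity $\Phi(r,s)+\Phi(s,r)=0$, equivalently $(1+(S^{-1})^{*})\bm\mu^2=0$ with $S=\left(\begin{smallmatrix}0&-1\\1&0\end{smallmatrix}\right)$. The paper proves precisely this in part (b) of Proposition \ref{wdefined}, where it is noted to follow immediately from the skew-symmetry $\{x,y\}=-\{y,x\}$ of the Steinberg symbol. Once you add that easy check, your argument closes and is essentially equivalent to the paper's, which packages the same verifications via the exact sequence $0\to M\to\Z[\GL_2(\Q)]\to\Delta_0\to 0$ with annihilator of $(\infty\to 0)$ generated by $\{\delta-1:\delta\in T(\Q)\}\cup\{1+S,\ 1+R+R^2\}$.
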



For any triple $r,s,t\in \PP^1(\Q)$ of distinct cusps, we need to show that the Manin relations hold
\be
&&{\bm\xi}(r\to s) + {\bm\xi}(s\to r) =0 \\
&&{\bm\xi}(r\to s) + {\bm\xi}(s\to t) + {\bm\xi}(t\to r) = 0.
\ee
The first relation follows immediately from the antisymmetry of the Steinberg symbol. 
However, the proof of the second Manin relation is more involved.
We prove the second Manin relation \eqref{Maninrelation} for Siegel units by applying Suslin's  reciprocity law to certain elements in $K_3$ of the function field of an elliptic surface.

We define the complex manifold $\cE_N:=(\C \times \fH)/ \tilde \Gamma(N)$ and a natural quotient map
\bea \label{univell}
\pi_N: \cE_N:=(\C \times \fH)/ \tilde \Gamma(N) \to Y_N:=\fH/\Gamma(N)
\eea
where the action of $\tilde \Gamma(N)=\Z^2 \rtimes \Gamma(N)$ on $\C \times \fH$ is the one described in the proof of Proposition \ref{Theta:transform} (see displayed equation \eqref{saction})
and $\pi_N$ is induced from the natural projection from $\C \times \fH$ to $\fH$. 
For $N\geq 3$, it is known that there is an algebraic variety $\cE(N)$ over $Y(N)$ of relative dimension $1$ such that the associated analytic space $\cE(N)(\C)$ is isomorphic to $\cE_N$ as complex manifolds; in fact, there is an elliptic surface $\overline \cE(N)$ over $X(N)$, which is the smooth compactification of $Y(N)$, in the sense of \cite[Definition on p. 202]{Silverman}; see Appendix \ref{appendix} for details.
Let $E(N)$ be the generic fiber of $\overline \cE(N)$, i.e. $E(N)=\Spec(k_N)\times_{X(N)}\overline \cE(N)$:
\[\begin{tikzcd}
	{E(N)} & {\overline \cE(N)} \\
	{\Spec(k_N)} & {X(N).}
	\arrow[" ", from=1-1, to=1-2]
	\arrow["{}"', from=1-1, to=2-1]
	\arrow[from=1-2, to=2-2]
	\arrow[" ", from=2-1, to=2-2]
\end{tikzcd}\]
Note that $E(N)$ is an elliptic curve over $k_N$ and the group of $E(N)(k_N)$ of $k_N$-rational points of $E(N)$ is exactly the group of points of order $N$ of $E(N)$ for $N\geq 3$; see \cite[Remark 5.5, Theorem 5.5]{Shioda} for more details.
\begin{definition} \label{fn}
For $N\geq 3$, let $F_N=k_N(E(N))$ be the field of rational functions on $E(N)$ over the field $k_N$. 
\end{definition}

Let $F_\infty:= \bigcup_{N} F_N$. Then it is easy to see that $F_\infty$ is a field extension of $k_\infty$ of transcendence degree 1, since $\cE(N)$ has relative dimension 1 over $Y(N)$.
We will use Theorem \eqref{SRL} (Suslin's reciprocity law for Milnor $K$-groups) applied to $$\Res: \tilde K^M_3(F_\infty) \to \tilde K^M_2(k_\infty)$$ 
as follows:

%
For each integer $N \geq 3$, we construct an element $\Phi_N \in K^M_3(F_N)$ such that
\bea
\Res(\Phi_N)=N^2(12N^2)^3\left( \{g_{\ba-\bb},g_{\bc-\ba}\} +  \{g_{\bc-\ba},g_{\bb-\bc}\} +  \{g_{\bb-\bc},g_{\ba-\bb}\} \right)
\eea
where $\ba,\bb,\bc \in A(N)=\frac{1}{N}\Z^2/\Z^2$;  
$$
\Phi_N:=\sum_{\bx \in A(N)} \left\{\frac{_{N}\!\Theta_{\ba}(u,\tau)^{12}}{_{N}\!\Theta_{\bx}(u,\tau)^{12}},\frac{_{N}\!\Theta_{\bb}(u,\tau)^{12}}{_{N}\!\Theta_{\bx}(u,\tau)^{12}},\frac{_{N}\!\Theta_{\bc}(u,\tau)^{12}}{_{N}\!\Theta_{\bx}(u,\tau)^{12}}\right\}.
$$
where $_{N}\!\Theta_{*}(u,\tau)^{12}$ are specific theta functions on universal elliptic curves which will be defined in the body of the paper;
see Lemma \ref{bcomputation} and Proposition \ref{klem} for further details.
By applying Suslin's reciprocity law to $\Res(\Phi_N)$, we get the following identity for all $N\geq 1$ (cf. Theorem \ref{sres}):
\bea\label{Maninrelation}
\{g_\ba, g_\bb \}+\{g_\bb, g_\bc\}+\{g_\bc, g_\ba\}=0\quad\text{in}\quad  \tilde K^M_2(k_N),
\eea
for $\ba,\bb,\bc \in A(N)$ satisfying $ \ba+\bb+\bc=0$.

For the $\GL_n(\Q)$-invariant modular symbol for $n\geq 1$, we use the language of Ash-Rudolph \cite{AS}: see Definition \ref{AR} for a precise definition.
We define the distribution ${\bm\mu}^n$ in the $\GL_n(\Q)$-setting.
\begin{definition}\label{glnSD}
The distribution ${\bm\mu}^n \in \Dist(M_{n\times2}(\Q),\tilde K^M_n(k_\infty))$ is defined by
\be
{\bm\mu}^n(\phi_1 \otimes \cdots \otimes \phi_n) = \{{\bm\mu}(\phi_1), \ldots, {\bm\mu}(\phi_n) \},
\quad \phi_i \in \cS(\Q^2),
\ee
where $\phi_1 \otimes \cdots \otimes \phi_n \in \cS(\Q^2) \otimes \cdots \otimes \cS(\Q^2) \simeq \cS(M_{n\times 2}(\Q))$.
\end{definition}
Note that ${\bm\mu}^1$ is exactly same as the Siegel distribution ${\bm\mu}$ in Proposition \ref{Sieg} and ${\bm\mu}^2$ is the Beilinson-Kato distribution in Definition \ref{BKdist}.
Let $\{e_1, \cdots, e_n\}$ be the standard basis of $\Q^n$ and $\{e_1^*, \cdots, e_n^*\}$ denote the dual basis of $(\Q^n)^*:=\Hom(\Q^n,\Q)$.

\begin{theorem}\label{glnmodularsymbol}
For each $n \geq 1$ there is a unique $\GL_n(\Q)$-invariant modular symbol ${\bm\xi}_n$
\bea
{\bm\xi}_n: (\operatorname{Hom}(\Q^n,\Q))^n \to \Dist(M_{n\times 2}(\Q), \tilde K^M_n(k_\infty))
\eea
such that ${\bm\xi}_n (e_1^*, \cdots, e_n^*) = {\bm\mu}^n$.
\end{theorem}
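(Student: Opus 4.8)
The plan is to reduce Theorem \ref{glnmodularsymbol} to the Ash--Rudolph formalism for $\GL_n(\Q)$-invariant modular symbols together with the cocycle property of the Steinberg product of the Siegel distribution ${\bm\mu}$. Recall that in the Ash--Rudolph language a $\GL_n(\Q)$-invariant modular symbol with values in a $\GL_n(\Q)$-module $M$ is an alternating function $\mathbf{c}\colon (\Q^n)^{*\,n}\to M$ (or equivalently on $n$-tuples of cusps in $\PP^{n-1}(\Q)$) satisfying the $(n+1)$-term coboundary relation $\sum_{i=0}^{n}(-1)^i\,\mathbf{c}(v_0,\dots,\widehat{v_i},\dots,v_n)=0$ and the equivariance $\gamma^*\bigl(\mathbf{c}(\gamma v_0,\dots,\gamma v_n)\bigr)=\mathbf{c}(v_0,\dots,v_n)$. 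So the content of the theorem is: (i) the assignment on the ``standard simplex'' $(e_1^*,\dots,e_n^*)\mapsto{\bm\mu}^n$ extends uniquely by $\GL_n(\Q)$-equivariance to a well-defined alternating function on all of $(\Q^n)^{*\,n}$, and (ii) the resulting function satisfies the coboundary relation.

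First I would address uniqueness and the construction. Uniqueness is immediate: $\GL_n(\Q)$ acts transitively on ordered $n$-tuples of vectors in $(\Q^n)^*$ spanning a fixed flag type (in particular on bases), so equivariance forces the value on any basis, and then the coboundary relation together with the alternating property propagates the definition to degenerate tuples; since Theorem \ref{glnmodularsymbol} asserts existence of such a symbol, at most one exists. For existence I would define, for a general $n$-tuple $(v_1,\dots,v_n)$ of elements of $(\Q^n)^*$, the candidate value by choosing $\gamma\in\GL_n(\Q)$ with $\gamma^* e_i^* = v_i$ when the $v_i$ form a basis and setting $\mathbf{\xi}_n(v_1,\dots,v_n):=(\gamma^{-1})^*{\bm\mu}^n$; one checks this is independent of the choice of $\gamma$ precisely because ${\bm\mu}^n$ is invariant under the stabilizer of $(e_1^*,\dots,e_n^*)$, and here the $\Q^\times$-invariance of ${\bm\mu}$ from Proposition \ref{Sieg} (applied diagonally) is what guarantees invariance under the relevant torus and permutation-free stabilizer, exactly as in the $n=2$ case of Theorem \ref{modularsymbol}. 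For non-basis tuples one extends by the Ash--Rudolph recipe: the ``modular symbol of a non-generic simplex'' is expressed via subdivision into unimodular cones, so the value is forced to be the corresponding alternating sum of basis-values, and one must verify this is consistent — again a formal consequence of the two relations below holding on basis tuples.

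The real work is the coboundary (cocycle) relation, and I expect this to be the main obstacle. The abstract is explicit that the cocycle property of ${\bm\xi}_n$ is proved by induction on $n$ anchored at $n=1,2$. So the plan is: the case $n=1$ is trivial (a single-variable alternating function has an empty coboundary condition, or the two-term relation is just antisymmetry, which holds because the symbol on $0$-simplices is literally ${\bm\mu}$), and $n=2$ is exactly Theorem \ref{modularsymbol}, whose hard input is the second Manin relation \eqref{Maninrelation} for Siegel units, established via Suslin reciprocity applied to $\Res(\Phi_N)$. For the inductive step, given a $5$-or-more-term relation for ${\bm\xi}_{n+1}$ on vectors $v_0,\dots,v_{n+1}\in(\Q^{n+1})^*$, I would peel off one coordinate: write each needed $n$-tuple value of ${\bm\xi}_{n+1}$ in terms of the Steinberg product $\{{\bm\mu}(\cdot),\dots,{\bm\mu}(\cdot)\}$, use multilinearity and antisymmetry of the Milnor symbol to factor out the ${\bm\mu}$ associated to the distinguished vector, and reduce the identity to the $n$-variable coboundary relation for ${\bm\xi}_n$ together with the $n=2$ Manin relation sitting in the remaining two slots. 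Concretely, the $(n+2)$-term alternating sum telescopes: after fixing a codimension-one subspace and expanding the top Steinberg symbol $\{x_0,\dots,x_{n+1}\}$ using the identity $\{a/c,b/c\}=\{a,b\}-\{a,c\}-\{c,b\}$ repeatedly (the same algebraic move used in building $\Phi_N$), each term is rewritten as a combination of $(n{+}1)$-term relations for ${\bm\xi}_n$ and $3$-term relations for ${\bm\xi}_2$, both of which vanish by the inductive hypothesis. The bookkeeping of signs across the subdivision — matching the orientation conventions of Ash--Rudolph simplices with the antisymmetry of the Milnor $K$-symbol — is where the delicacy lies, and I would organize it by first verifying the relation on unimodular ($\GL_{n+1}(\Z)$-translates of the standard) simplices, where all vectors are unit vectors and the Steinberg manipulations are cleanest, and then invoking $\GL_{n+1}(\Q)$-equivariance plus the subdivision formula to get the general case.

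Finally, once the coboundary relation and equivariance are in hand on basis tuples, one records that the extension to all of $(\Q^n)^{*\,n}$ via Ash--Rudolph is forced and consistent, that it recovers ${\bm\mu}^n$ on $(e_1^*,\dots,e_n^*)$ by construction, and that it agrees with ${\bm\xi}$ of Theorem \ref{modularsymbol} when $n=2$ under the identification of $2$-tuples of cusps with $\Delta_0$; this last compatibility check is routine from the defining property ${\bm\xi}((0)-(\infty))={\bm\mu}^2$. That completes the proof.
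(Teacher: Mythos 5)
Your overall architecture matches the paper's: define $\bm\xi_n$ on a basis of $(\Q^n)^\ast$ by pulling back $\bm\mu^n$, use the $\Q^\times$-invariance of $\bm\mu$ to show the choice of pullback matrix is immaterial, get antisymmetry from the Milnor symbol, and prove the $(n{+}2)$-term cocycle relation by induction on $n$ anchored at the $n=2$ Manin relation. But the cocycle argument has two genuine gaps. First, you describe the value on non-basis tuples as an alternating sum of basis values coming from a ``subdivision into unimodular cones''; this is not what Ash--Rudolph prescribes, nor what the paper does. In Definition \ref{glnm} the symbol is simply declared to be $0$ on linearly dependent $n$-tuples — that is one of the defining axioms, equation \eqref{five} — and the subdivision machinery in Ash--Rudolph, which reduces a non-unimodular \emph{basis} to unimodular ones, is a \emph{consequence} of the cocycle relation; invoking a ``subdivision formula'' to establish the cocycle relation is therefore circular. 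Second, and more seriously, your inductive step never treats $(n{+}1)$-tuples $(\lambda_0,\ldots,\lambda_n)$ that are not in general position (some proper sub-tuple already linearly dependent); verifying the relation on unimodular simplices and propagating by $\GL_n(\Q)$-equivariance and homogeneity \eqref{two} covers only the generic orbit. The paper devotes much of Section \ref{glngeneral} to precisely this degenerate case: extract a minimal linearly dependent subset $\{\lambda_{j_0},\ldots,\lambda_{j_k}\}$ (automatically in general position), normalize to $\lambda_0=\lambda_1+\cdots+\lambda_k$ using \eqref{two} and \eqref{three}, observe that the terms $\xi(\lambda_0,\ldots,\hat\lambda_i,\ldots,\lambda_n)$ vanish for $i\ge k+1$ by \eqref{five}, and reduce the surviving sum to the purely $K$-theoretic Proposition \ref{cocyclep}.

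A smaller discrepancy: the paper first proves Proposition \ref{cocyclep} as a free-standing statement about Siegel units in $\tilde K^M_n(k_\infty)$, with a specific mechanism — set $\ba_0':=\ba_0-\ba_{m+1}$, multiply the $m$-term relation by $\{g_{\ba_{m+1}}\}$ and by $\{g_{\ba_0}\}$ to obtain two vanishing quantities $A$ and $B$, then compute $A-B$ using the $n=2$ relation $\{g_{\ba_0'},g_{\ba_{m+1}}\}-\{g_{\ba_0},g_{\ba_{m+1}}\}+\{g_{\ba_0},g_{\ba_0'}\}=0$. That last identity is the $n=2$ cocycle relation (i.e.\ the Manin relation), not the bilinearity identity $\{a/c,b/c\}=\{a,b\}-\{a,c\}-\{c,b\}$ you cite; multilinearity alone is not enough, and the inductive step does genuinely consume the $n=2$ case at each stage. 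Your telescoping intuition points in the right direction, but as written it does not produce the cancellation.
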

Note that ${\bm\xi}_1$ reduces to the Siegel distribution ${\bm\mu}$, and 
${\bm\xi}_2$ reduces to the Beilinson-Kato modular symbol ${\bm\xi}: \operatorname{Div}^0(\BP^1(\Q)) \to \Dist(M_{2\times2}(\Q), \tilde K^M_2(k_\infty))$: see Proposition \ref{cgcom} for details.
In Section \ref{glngeneral}, we will actually prove a basis-free version of Theorem \ref{glnmodularsymbol}, as we work with a $\Q$-vector space of dimension $n$ instead of $\Q^n$.

\vspace{1em}
We are aware that our result is not the most general possible.  We expect that our results may extend to curves over more general rings.
Note in particular that (almost) integrality is clear from our explicit computations of the residue map on Steinberg symbols of theta functions; for example, Lemma \ref{bcomputation} shows the precise denominator $\frac{1}{(12N^2)^3}$ in the computation. 
It seems possible to show that the computation generalizes to arbitrary moduli spaces of elliptic curves over a base scheme $S$ and their associated modular curves.
A potential sticking point is a precise generalization of Suslin's reciprocity result, though we feel confident that more recent advances in the field should be applicable (see for example \cite{Kriz}).\
It would certainly be a worthwhile project to pursue this abstraction.  However, we felt that a paper that explains the complex analytic case clearly and correctly would be a valuable prior contribution to the literature.

\vspace{1em}
The structure of the rest of the paper is as follows: 
Following a brief digression (Section \ref{GonchMethod}) on the gap in the proof in \cite{Gon}, we begin in Section \ref{background} by recalling the concepts of distributions, Milnor $K$-theory, and Suslin reciprocity.  In Section \ref{BeilinsonKato}, we continue by recalling theta functions and how to construct elliptic functions out of theta functions.  We then prove that the Manin relations are satisfied and complete the construction of the $\GL_2(\Q)$-invariant modular symbol.  In Section \ref{glngeneral}, we generalize our construction of modular symbols to those with coefficients in $\tilde K^M_n(k_\infty)$.  

\subsection{Remark on Goncharov's method}\label{GonchMethod}
In this section we briefly explain the argument of Goncharov in \cite{Gon}, we point out where we believe a gap exists in his argument, and we explain how we clarify it.

Let $E$ be an elliptic curve over an algebraically closed field $k$.
Let $k(E)$ be a function field of $E/k$.
For any field $F$, Goncharov considers the Bloch complex (\cite[section 2.1]{Gon})
\be
\cB_2(F) \xrightarrow{\delta_2} \wedge^2 F^\times
\ee
and the commutative diagram
\[\begin{tikzcd}
	{\cB_2(k(E))\otimes k(E)^\times} & {\wedge^3 k(E)^\times} \\
	{\cB_2(k)} & {\wedge^2k^\times.}
	\arrow["{\delta_2\wedge \mathrm{Id}}", from=1-1, to=1-2]
	\arrow["{\mathrm{Res}}", from=1-1, to=2-1]
	\arrow["{\mathrm{Res}}", from=1-2, to=2-2]
	\arrow["{\delta_2}", from=2-1, to=2-2]
\end{tikzcd}\]

Goncharov \cite[Theorem 2.4]{Gon} proved (in a previous paper) that there exists a map $h$ such that the following diagram commutes:
\[\begin{tikzcd}
	{\cB_2(k(E))\otimes k(E)^\times} & {\wedge^3k(E)^\times} \\
	{\cB_2(k)} & {\wedge^2k^\times.}
	\arrow["{\delta_2\wedge \mathrm{Id}}", from=1-1, to=1-2]
	\arrow["{\mathrm{Res}}", from=1-1, to=2-1]
	\arrow["h", from=1-2, to=2-1]
	\arrow["{\mathrm{Res}}", from=1-2, to=2-2]
	\arrow["{\delta_2}", from=2-1, to=2-2]
\end{tikzcd}\]
For each $N$-torsion point $a$ in $E(k)$ Goncharov defines an element (\cite[Corollary 2.6]{Gon})
\be
\theta_E(a) \in k^\times \otimes \Q.
\ee
Then, for any $N$-torsion points $a,b$ on $E(k)$, he claims that the function
\be
t \mapsto \frac{\theta_E(t-a)^N}{\theta_E(t-b)^N}
\ee
is an element of $k(E)^\times$ satisfying $\mathrm{Div} (\frac{\theta_E(t-a)^N}{\theta_E(t-b)^N}) = N (\{a\} - \{b\})$.
(see \cite[section 2.4, page 1889-1890]{Gon}).  
Note that Goncharov is claiming something quite strong here, namely that he has constructed a function on $E$ with a very specific form.  Although it is true that the divisor $N (\{a\} - \{b\})$ on an elliptic curve over a field is principal, there is no sense in which such a function is canonical or has a canonical form (see for example the opening sentence of \cite[section 1.2]{Sch}).  Furthermore, it is no longer true in general that $N (\{a\} - \{b\})$ is principal on an elliptic curve over a base scheme $S$ different from a field. (For an explicit example see the last remark in \cite[section 1.2]{Sch}.)

Goncharov then computes the image of the wedge product of theta functions $\frac{\theta_E(t-a)^N}{\theta_E(t-b)^N}$ in $k(E)^\times$ under the map $h$ (in \cite[Theorem 2.4]{Gon}) in order to prove the Manin relations for Siegel units.  
So for Goncharov, the form of his alleged function is crucial to his proof.
The problem is that he does not actually define $\theta_E(t)$ for $t \in E(k)$ (Note that $\theta_E(t)$ is defined only for each $N$-torsion point $t$) in \cite{Gon} and thus the notation $\frac{\theta_E(t-a)^N}{\theta_E(t-b)^N}$ does not make sense as an element of $k(E)^\times$.
	
There is a hint as to what Goncharov may have had in mind here regarding the construction of $\theta_E(t)$ in a reference \cite{GL} to a previous paper with A. Levin.  However, this does not seem to help. In \cite[Remark, p 400]{GL} the authors specify the need to view $t\mapsto \theta_E(t)$ as valued in the Bloch group $B_2(E)$, and it is clear from its properties that the group $B_2(E)$ does not contain $k(E)^\times$ as a subgroup.


Our clarification has two important features that differentiate it from \cite{Gon}.  
The first is that we do not use the map $h$ (in \cite[Theorem 2.4]{Gon}), but rather a related result that follows from the Suslin reciprocity law for $\wedge^3 k(E)^\times \xrightarrow{\mathrm{Res}} \wedge^2 k^\times$, and the second is that we  explicitly (Proposition \ref{dcomputation}) find a function $f_{a,b}$ which satisfies $\mathrm{Div}(f_{a,b}) = 12N^2 (\{a\}-\{b\})$ using the Kato-Siegel functions (see for example \cite[Theorem 1.2.]{Sch} for details about how such functions are constructed over a more general base). 



Note that we do not see how to recover the function Goncharov wants from our construction; if one defines $\Theta_{\ba}$ in a similar way without $N$,
	 then $\Theta_{\ba}^{12}/\Theta_{\bb}^{12}$ would not be invariant under $\tilde \Gamma(N)$ (see the footnote after displayed equation \eqref{univell})  and so is not a meromorphic function.  In particular, $(\Theta_{\ba}^{12}/\Theta_{\bb}^{12})^N$ does not define a function with divisor $N (\{a\} - \{b\})$.


To be clear, our proof in Section \ref{ManinRelsProof} is similar in outline to Goncharov's plan in \cite{Gon} for the proof of the Manin relations.  In particular, we use (a corrected version of) Goncharov's beautiful trick (Lemma \ref{gonchvariantlemma}; see also \cite{Gon}, Lemma 2.11) of summing over all $N$-torsion points to get zero.

%
%
%

\subsection{Acknowledgements}
All four authors warmly thank three anonymous referees for their close reading of our paper and for their  substantive, perceptive, and constructive suggestions for improving it. 
 
The work of 
the second author was supported by the National Research Foundation of Korea (NRF-2021R1A2C1006696) and the National Research Foundation of Korea (NRF) grant funded by the Korea government (MSIT) (No.2020R1A5A1016126).  The work of the third author was supported by the Heilbronn Institute for Mathematical Research, Bristol, UK.

\section{Background and notation}\label{background}

\subsection{Test functions and distributions}
Throughout this paper, $V$ will be a finite dimensional $\Q$-vector space of dimension $n$ and $V^\ast := \Hom(V,\Q)$ will denote the dual space.
We let $G := \Aut_\Q(V)$ be the group of $\Q$-linear automorphisms of $V$.

By a lattice in $V$ we mean a finitely generated additive subgroup of $V$ that spans $V$ as a $\Q$-vector space.  A subset
$X\subseteq V$ is said to be {\it uniform} if there is a lattice $L$ in $V$ for which $X+\ell = X$ for every $\ell \in L$, in which case we say
$X$ is uniform with respect to $L$.  
We say $X$ is {\it bounded} if $X$ is contained in a lattice $M$ in $V$, in which case we say $X$ is bounded by $M$.   The collection of bounded uniform subsets of $V$ generates a topology on $V$ which we call the {\it lattice topology} on $V$.   

For a function $f$ on 
$V$ with bounded support and a vector $\ell\in V$ we say $\ell$ is a period of $f$ if $f(x+\ell) = f(x)$ for all $x\in V$.   The set of all periods of $f$ will be denoted 
$L_f$.   We say $f$ is locally constant if $L_f$ contains a lattice in $V$.  If $f\ne 0$, then both $L_f$ and the subgroup $M_f$ of $V$ generated 
by the support of $f$ are lattices in $V$.  We then call $L_f$ the period lattice of $f$ and $M_f$ the support lattice of $f$ and note that $L_f \subseteq M_f$.
We define
\bea \label{testftn}
\cS(V) := \{\, f : V \lra \Z\,|\, f \hbox{ has bounded support and is locally constant}\}.
\eea
We note that $G$ acts contravariantly on $\cS(V)$ by pull-back:  $(\g^\ast f) (x) = f( \g x)$.
For an arbitrary abelian group $A$, we define the group of $A$-valued distributions on
$V$ to be the group
$$
\Dist(V,A) := \Hom(\cS(V),A).
$$
For $\mu \in \Dist(V,A)$ and $f\in \cS(V)$ 
we sometimes write $\mu(f)$ in one or more of the following (suggestive) forms:
$$
\mu(f) =: \int f\cdot d\mu = \int f(x)d\mu(x) = \int_V f(x)d\mu(x).
$$
 In the applications $A$ will always be endowed with a contravariant action by a subgroup $H \subseteq G$.   In that case,  $\Dist(V,A)$ inherits a natural contravariant action by $H$, satisfying
\bea \label{distributionaction}
\int \gamma^\ast f \cdot d\gamma^\ast\mu = \gamma^\ast \left(\int f(x)d\mu(x)\right),
\eea
for any $\mu\in \Dist(V,A)$, $f\in \cS(V), x \in V$ and $ \gamma\in H$.

%

\subsection{Milnor $K$-groups and Suslin reciprocity}

We briefly recall the Milnor ring of a field.
Let $F$ be a field. The Milnor $K$-groups are defined as follows:
\begin{eqnarray*}
K^M_0(F) &=& \Z, \quad K^M_1(F) = F^\times \\
K^M_2(F) &=& F^\times \otimes F^\times / \langle a\otimes b :  a+b= 1 \rangle \\
K^M_n(F) &=& F^\times \otimes \cdots \otimes F^\times / \langle a_1 \otimes \cdots \otimes a_n
 : \text{ there exists } i \text { such that } a_i+a_{i+1}= 1 \rangle \\
 K^M_\ast(F) &=& \bigoplus_{n\geq 0} K^M_n(F).
\end{eqnarray*} 
Then $K^M_\ast(F)$ is a graded $\Z$-algebra which is a quotient of the tensor algebra on copies of $F^\times$ modulo the homogenous ideal in degree 2 generated by the Steinberg relation ($a_i+a_{i+1}=1$)\footnote{we will hereafter abuse notation and refer to this ideal as ``the Steinberg relations''}.
  We use the notation $\{x_1, \cdots, x_n\} \in K^M_n(F)$ to denote the element $x_1\otimes \cdots \otimes x_n$ modulo the Steinberg relations.  
The abelian group structure of $K^M_n(F)$ is determined by the following multi-linearity property:
$$
\{x_1, x_2, \ldots, x_n\} +\{x_1', x_2, \ldots, x_n  \}=\{x_1 \cdot x_1', x_2, \ldots, x_n\}.
$$
where similar multi-linearity holds for the other components. 

A straightforward computation (\cite[Lemma 1.1]{Sus}) shows that 
\begin{eqnarray*}
&&\{x,y\} = -\{y, x\}, \quad x,y \in A^\times, \\
&&\xi \cdot\eta = (-1)^{nm} \eta \cdot \xi, \quad \xi \in K^M_n(A), \eta \in K^M_m(A)\\
&&\{x_1, \ldots, x_n\}=0, \quad \text{if} \quad x_1+\cdots+x_n=0 \text{ or } 1. 
\end{eqnarray*}

We use the notation
\be
\tilde K^M_n(F):=K^M_n(F) \otimes_\Z \Q.
\ee

Let $v$ be a discrete valuation on $F$ and $\cO_v=\{x \in F : v(x) \geq 0\}$ be the corresponding valuation ring whose maximal ideal is denoted by $\frak m_v=\{x \in F : v(x) > 0\}$.
Let $k(v)=\cO_v/\frak m_v$ be the residue field.
For an element $a \in \cO_v$ we denote by $\bar a$ the canonical image of $a$ in $k(v)$.

\begin{lemma}\cite[Lemma 1.3]{Sus} \label{boundary}
There is an additive homomorphism $\partial_v$ from the additive group of the ring $K^M_\ast(F)$ to the additive group of $K^M_{\ast-1}(k(v))$ such that
\begin{enumerate}[(1)]
\item $\partial_v (\{u_1, \ldots, u_{n-1}, a\} = v(a) \cdot \{ \bar u_1, \ldots, \bar u_{n-1}\}$ where $u_1, \ldots, u_{n-1} \in \cO^\times$ and $a \in F^\times$.
\item $\partial_v$ is an epimorphism of graded groups of degree -1 whose kernel contains $\{1 + \frak m_v\}\cdot K^M_\ast (F)$.
\item the following diagrams are commutative:
\[\begin{tikzcd}
	{K^M_1(F)} & {K^M_0(k(v))} && {K^M_2(F)} & {K^M_1(k(v))} \\
	{F^\times} & \Z && {(\wedge^2 F^\times)/\sim} & {k(v)^\times}
	\arrow["{\partial_v}", from=1-1, to=1-2]
	\arrow["{\partial_v}", from=1-4, to=1-5]
	\arrow["v", from=2-1, to=2-2]
	\arrow["{( \ ,\ )_v}", from=2-4, to=2-5]
	\arrow["\simeq", from=2-1, to=1-1]
	\arrow["\simeq", from=1-2, to=2-2]
	\arrow["\simeq", from=2-4, to=1-4]
	\arrow["\simeq", from=1-5, to=2-5]
\end{tikzcd}\]
where $\sim$ denotes the Steinberg relations and $(a,b)_v =\bar c$, where $c=(-1)^{v(a)v(b)}\frac{a^{v(b)}}{b^{v(a)}}$.  
\end{enumerate}
\end{lemma}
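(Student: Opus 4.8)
\textbf{Proof plan for Lemma \ref{boundary}.}
The plan is to follow Milnor's original construction of the tame symbol, exactly as presented in \cite[Lemma 1.3]{Sus}. First I would work not with $K^M_\ast(F)$ directly but with an auxiliary graded ring. Fix a uniformizer $\pi$ for the valuation $v$ (noting that the final map will be independent of this choice). Consider the graded $K^M_\ast(k(v))$-algebra $L_\ast := K^M_\ast(k(v)) \oplus K^M_{\ast-1}(k(v))\cdot\xi$, where $\xi$ is a formal degree-one symbol with $\xi^2 = \{-1\}\cdot\xi$ (so that $L_\ast$ is graded-commutative). I would then define a map $d_v \colon F^\times \to L_1$ on the generators of degree one by $d_v(u\pi^m) = \{\bar u\} + m\,\xi$ for $u \in \cO_v^\times$, $m \in \Z$, using the decomposition $F^\times \cong \cO_v^\times \times \pi^{\Z}$ together with the reduction map $\cO_v^\times \to k(v)^\times$. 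The key verification is that $d_v$ kills the Steinberg relations: given $a, b \in F^\times$ with $a + b = 1$, one checks by a short case analysis (according to whether $v(a), v(b)$ are positive, zero, or negative, and using that $a+b=1$ forces $\min(v(a),v(b)) \le 0$ with equality of the two smaller valuations when negative) that $d_v(a)\cdot d_v(b) = 0$ in $L_2$. This is the heart of the argument and is where essentially all the work lies; the remaining statements are formal consequences.

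Once $d_v$ is shown to respect the Steinberg relations, it extends by the universal property of the Milnor ring to a graded ring homomorphism $d_v \colon K^M_\ast(F) \to L_\ast$. I would then \emph{define} $\partial_v \colon K^M_\ast(F) \to K^M_{\ast-1}(k(v))$ to be the composite of $d_v$ with the projection $L_\ast \to K^M_{\ast-1}(k(v))$ picking off the $\xi$-component. Since $d_v$ is additive (indeed a ring homomorphism) and the projection is additive, $\partial_v$ is an additive homomorphism of graded groups of degree $-1$. Formula (1) is then immediate: if $u_1, \dots, u_{n-1} \in \cO_v^\times$ and $a = u\pi^m \in F^\times$ with $u \in \cO_v^\times$, then $d_v(\{u_1,\dots,u_{n-1},a\}) = \{\bar u_1\}\cdots\{\bar u_{n-1}\}\cdot(\{\bar u\} + m\xi)$, and extracting the $\xi$-component gives $m\cdot\{\bar u_1, \dots, \bar u_{n-1}\} = v(a)\cdot\{\bar u_1, \dots, \bar u_{n-1}\}$, using that the first summand $\{\bar u_1\}\cdots\{\bar u_{n-1}\}\{\bar u\}$ lies purely in $K^M_n(k(v))$ and contributes nothing to the $\xi$-part. (One should also record that $\partial_v$ is independent of the choice of $\pi$, which follows because changing $\pi$ to $\pi' = w\pi$ with $w \in \cO_v^\times$ alters $d_v$ only in the $K^M_\ast(k(v))$-component, not the $\xi$-component, on degree-one generators, hence not at all on $\partial_v$.)

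For (2): surjectivity of $\partial_v$ follows from (1), since already $\partial_v(\{u_1, \dots, u_{n-1}, \pi\}) = \{\bar u_1, \dots, \bar u_{n-1}\}$ and $\{u_1, \dots, u_{n-1}\}$ ranges over a generating set of $K^M_{n-1}(k(v))$ as the $u_i$ range over $\cO_v^\times$ (using that $\cO_v^\times \to k(v)^\times$ is surjective). For the claim that $\{1 + \frak m_v\}\cdot K^M_\ast(F) \subseteq \ker \partial_v$, I would note that for $x \in 1 + \frak m_v$ we have $v(x) = 0$ and $\bar x = 1$, so $d_v(x) = \{\bar x\} = \{1\} = 0$ in $L_1$; hence $d_v$ itself kills $\{1+\frak m_v\}\cdot K^M_\ast(F)$, and a fortiori so does $\partial_v$. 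Finally, for (3), the case $n=1$ is the tautology that the $\xi$-component of $d_v(u\pi^m) = \{\bar u\} + m\xi$ is $m = v(u\pi^m)$, matching the valuation $v \colon F^\times \to \Z$ under $K^M_0 \cong \Z$, $K^M_1 \cong (\cdot)^\times$. For $n=2$, I would compute $\partial_v\{a,b\}$ directly by multilinearity after writing $a = u\pi^{v(a)}$, $b = w\pi^{v(b)}$: expanding $d_v(a)\,d_v(b) = (\{\bar u\} + v(a)\xi)(\{\bar w\} + v(b)\xi)$ and extracting the $\xi$-component gives $v(b)\{\bar u\} - v(a)\{\bar w\} + v(a)v(b)\{\overline{-1}\}$ (the last term from $\xi^2 = \{-1\}\xi$, with a sign from graded-commutativity), which is exactly $\overline{(-1)^{v(a)v(b)} a^{v(b)}/b^{v(a)}}$ inside $k(v)^\times \cong K^M_1(k(v))$; this is the required commutativity with the tame symbol $(\ ,\ )_v$. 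The main obstacle throughout is purely the Steinberg-relation check for $d_v$ in the construction above — a finite but slightly delicate case analysis on valuations — after which everything else is bookkeeping with the two-component grading on $L_\ast$.
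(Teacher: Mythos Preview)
Your proposal is correct and is precisely the standard Milnor--Bass--Tate construction that Suslin records as \cite[Lemma 1.3]{Sus}; note, however, that the paper does not give its own proof of this lemma at all but simply cites it from \cite{Sus}, so there is nothing to compare against beyond observing that your argument is the one being invoked.
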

(Note that we choose to define the tame symbol with a sign matching class field theory rather than the usual definition in classical descriptions of algebraic $K$-theory.  This choice of course has no bearing on the validity of subsequent constructions in this paper.)

Suppose that $F$ is a finite extension of a field $k$.
Then there is a well-defined norm map (see \cite[Lemma 1.8]{Sus} for details)
\be
N_{F/k}:K^M_n(F) \to K^M_n(k), \quad n=0,1,2.
\ee
In general, there is a well-defined norm map
\be
N_{F/k}: \tilde K^M_n(F) \to \tilde K^M_n(k), \quad n\geq 0.
\ee

\begin{theorem}[Suslin's reciprocity law]\cite[Theorem 2.6]{Sus}\label{SRL}
If $F$ is a field extension of a field $k$ with transcendence degree 1, then for each $x\in \tilde K^M_\ast(F)$
\be
\Res(x):=  \sum_{w \in \Sigma(F/k)} N_{k(w)/k}(\partial_w(x))=0 \ \ \text{(this is a finite sum),}\quad  \Res:\tilde K^M_n(F) \to \tilde K^M_{n-1}(k),
\ee
where $\Sigma(F/k)$ is the set of all discrete valuations (including archimedean valuations) of $F$ which are trivial on $k$. This holds for $x \in K^M_n(F)$ for $n=1,2,3$, instead of $\tilde K^M_n(F)$.
\end{theorem}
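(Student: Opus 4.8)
The plan is to deduce the general reciprocity law from the case of a rational function field $k(t)$ by passing through a finite morphism to the projective line, and to settle the $k(t)$-case directly from the Bass--Tate--Milnor description of $K^M_*(k(t))$. (This follows the Bass--Tate/Kato style of argument rather than Suslin's original one, but it is self-contained given Lemma \ref{boundary} and the existence of norm maps.) For $F=k(t)$ the valuations in $\Sigma(k(t)/k)$ are the $v_\pi$ attached to monic irreducible $\pi\in k[t]$, with residue field $k(\pi):=k[t]/(\pi)$, together with $v_\infty$, with residue field $k$. Milnor's localization sequence gives a split short exact sequence
\be
0\lra K^M_n(k)\lra K^M_n(k(t))\xrightarrow{(\partial_{\pi})_{\pi}}\bigoplus_{\pi}K^M_{n-1}(k(\pi))\lra 0,
\ee
so each $x\in K^M_n(k(t))$ has $\partial_\pi(x)=0$ for all but finitely many $\pi$, and $\Res_{k(t)/k}(x)$ is a finite sum. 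The Bass--Tate computation of the residue at infinity gives the identity of homomorphisms
\[
\partial_{v_\infty}\;=\;-\sum_{\pi}N_{k(\pi)/k}\circ\partial_{\pi}\colon K^M_n(k(t))\lra K^M_{n-1}(k),
\]
whence $\Res_{k(t)/k}(x)=\sum_\pi N_{k(\pi)/k}\partial_\pi(x)+\partial_{v_\infty}(x)=0$. For $n=1$ this is $\deg\,\mathrm{Div}(f)=0$ and for $n=2$ it is Weil reciprocity; both can also be read off the commutative squares of Lemma \ref{boundary}(3).

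Next, the reduction of a general $F/k$ of transcendence degree one. Choose a separating transcendence basis $\{t\}$ (automatic in characteristic zero, the situation of this paper; in positive characteristic the purely inseparable part is handled separately), so that $F/k(t)$ is finite separable, and by the primitive element theorem write $F=k(t)(\theta)$. For this fixed generator Bass--Tate's geometric norm construction produces $N_{F/k(t)}:K^M_n(F)\to K^M_n(k(t))$ in every degree $n$, integrally; this is the only norm in degree $\geq 3$ that enters (residue-field norms occur only in degrees $\leq 2$), and is why the integral statement holds for $n=1,2,3$. Each $w\in\Sigma(F/k)$ restricts to some $v\in\Sigma(k(t)/k)$, each $v$ has finitely many extensions $w\mid v$, and norms are transitive, $N_{k(w)/k}=N_{k(v)/k}\circ N_{k(w)/k(v)}$. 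The crucial ingredient is the norm-compatibility of the tame symbol: for each $v\in\Sigma(k(t)/k)$,
\be
\sum_{w\mid v}N_{k(w)/k(v)}\circ\partial_w\;=\;\partial_v\circ N_{F/k(t)}\colon K^M_n(F)\lra K^M_{n-1}(k(v)).
\ee
Granting this, apply $N_{k(v)/k}$ to both sides, sum over $v\in\Sigma(k(t)/k)$, and invoke the $k(t)$-case:
\be
\Res_{F/k}(x)&=&\sum_v N_{k(v)/k}\Bigl(\textstyle\sum_{w\mid v}N_{k(w)/k(v)}\partial_w(x)\Bigr)\\
&=&\sum_v N_{k(v)/k}\,\partial_v\bigl(N_{F/k(t)}(x)\bigr)=\Res_{k(t)/k}\bigl(N_{F/k(t)}(x)\bigr)=0.
\ee

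It remains to prove the norm-compatibility. I would reduce it to a local statement by completing at $v$: since $F\otimes_{k(t)}\widehat{k(t)_v}\cong\prod_{w\mid v}\widehat{F_w}$, and residues and norms are compatible with completion, it suffices to show $\partial_L\circ N_{E/L}=N_{\overline E/\overline L}\circ\partial_E$ on $K^M_n(E)$ for every finite extension $E/L$ of complete discretely valued fields. This is proved by factoring $E/L$ into its unramified and totally ramified parts, using the filtration of $K^M_*$ of a complete discretely valued field by powers of the maximal ideal (whose graded quotients are controlled by the residue field's multiplicative group and Kähler differentials), the explicit action of $\partial$ and $N$ on symbols built from uniformizers and units, and Hensel's lemma. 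The archimedean valuations allowed in $\Sigma$ do not arise in the transcendence-degree-one situation over a field.

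\medskip
\noindent\emph{Main obstacle.} The decisive and most delicate step is the last one, the norm-compatibility $\sum_{w\mid v}N_{k(w)/k(v)}\partial_w=\partial_v N_{F/k(t)}$. It rests on having a sufficiently functorial theory of norms on Milnor $K$-groups — already subtle beyond degree two — together with a careful local computation over complete discretely valued fields in the ramified case. By comparison, the $\PP^1$-reciprocity (from the Bass--Tate--Milnor sequence) and the combinatorial bookkeeping of the finite-morphism reduction are essentially formal.
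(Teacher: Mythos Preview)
The paper does not prove this theorem; it is stated with attribution to \cite[Theorem 2.6]{Sus} and used as a black box, so there is no in-paper argument to compare against.

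Your outline is essentially the strategy Suslin himself uses: establish reciprocity on $\PP^1_k$ from the Milnor--Bass--Tate sequence, then push a general $F$ down to $k(t)$ via the norm and invoke the residue--norm compatibility $\sum_{w\mid v}N_{k(w)/k(v)}\circ\partial_w=\partial_v\circ N_{F/k(t)}$. Two points are worth flagging. First, the Bass--Tate norm you invoke for the simple extension $F=k(t)(\theta)$ is, a priori, dependent on the choice of $\theta$; Suslin's main contribution in \cite{Sus} is precisely to prove it is not, but for the reciprocity law alone this independence is not needed---any one choice, with its projection formula and residue compatibility, suffices. Second, and more seriously, the local compatibility you isolate as the ``main obstacle'' really is the entire content of the theorem: your completion-and-filtration sketch is the right shape, but making it rigorous (especially in the wildly ramified case) requires the full machinery of \cite{Sus} or of Kato's local $K$-theoretic duality. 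So your roadmap is correct and you have located the work accurately, but the paragraph you label as the obstacle is not a lemma one dispatches in passing---it is the theorem.
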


\section{The Beilinson-Kato modular symbol}\label{BeilinsonKato}

\subsection{Siegel units and theta functions}


We begin with the basic theta function $\Theta$ defined on the space $\tfH := \C\times \fH$ by
$$
\Theta(u,\tau) := q^{\frac{1}{12}}(t^{1/2} - t^{-1/2}) \prod_{n=1}^\infty \left(1 - q^nt\right) \left(1 - q^nt^{-1}\right)
$$
with $t = e^{2\pi i u}$ and $q=e^{2\pi i \tau}$.  The product converges uniformly on compact subsets of $\tfH$, hence defines a holomorphic function on that space.  One checks easily that, as a function of $u\in \C$, $\Theta(u,\tau)$ has simple zero at each $\omega\in \Lambda_\tau$ and no zeroes outside of $\Lambda_\tau$.  In particular, for fixed $\tau$, the divisor of $\Theta(u,\tau)$ as function of $u$ is the characteristic function $[\Lambda_\tau]$ of the lattice $\Lambda_\tau=\Z\tau+\Z$ and is therefore invariant under translation by elements of $\Lambda_\tau$.   

\begin{proposition}\label{Theta:transform}
	For all $\omega = r\tau + s \in \Lambda_\tau$ we have
	$$
	\frac{\Theta(u+\omega,\tau)}{\Theta(u,\tau)} = \psi(\omega)e^{-2\pi i r (u + \omega/2)} = (-1)^s(-t)^{-r}q^{-r^2/2}
	$$
	where 
	$$
	\psi(\omega) = \begin{cases}
		1&\hbox{\rm if $\omega\in 2\Lambda_\tau$;}\\
		\\
		-1&\hbox{\rm otherwise.}\\
	\end{cases}.
	$$ 
\end{proposition}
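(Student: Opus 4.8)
The plan is to reduce the statement to the two special cases $\omega=1$ and $\omega=\tau$ by exploiting a cocycle (quasi-periodicity) identity, and then to verify those two cases directly from the product formula for $\Theta$. For $\omega\in\Lambda_\tau$ write $e_\omega(u):=\Theta(u+\omega,\tau)/\Theta(u,\tau)$, and for $\omega=r\tau+s$ with $r,s\in\Z$ set $j(\omega,u):=(-1)^s(-t)^{-r}q^{-r^2/2}$ (the rightmost expression in the proposition); I will first prove $e_\omega=j(\omega,\cdot)$ and then identify $j(\omega,u)$ with $\psi(\omega)e^{-2\pi i r(u+\omega/2)}$. Both $e$ and $j$ satisfy the cocycle identity $F(\omega_1+\omega_2,u)=F(\omega_1,u+\omega_2)\,F(\omega_2,u)$: for $e$ this is an immediate telescoping, and for $j$ it follows by noting that $u\mapsto u+\omega_2$ sends $t$ to $tq^{r_2}$ (because $e^{2\pi i(r_2\tau+s_2)}=q^{r_2}$), so that $j(\omega_1,u+\omega_2)\,j(\omega_2,u)=(-1)^{s_1+s_2}(-t)^{-(r_1+r_2)}q^{-r_1r_2-r_1^2/2-r_2^2/2}=j(\omega_1+\omega_2,u)$. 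Hence $h_\omega:=e_\omega/j(\omega,\cdot)$ is again such a cocycle; it is trivial at $\omega=0$, and since $\Lambda_\tau$ is generated by $1$ and $\tau$ it suffices to check $h_1\equiv 1$ and $h_\tau\equiv 1$.

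The case $\omega=1$ is immediate: $t^{1/2}=e^{\pi i u}$ becomes $-t^{1/2}$, while $t$, $q$ and every factor $1-q^nt^{\pm1}$ are unchanged, so $\Theta(u+1,\tau)=-\Theta(u,\tau)$, matching $j(1,u)=-1$. For $\omega=\tau$ one has $t^{1/2}\mapsto t^{1/2}q^{1/2}$ and $t\mapsto tq$; reindexing the products gives $\prod_{n\ge 1}(1-q^{n+1}t)=\prod_{n\ge 2}(1-q^nt)$ and $\prod_{n\ge 1}(1-q^{n-1}t^{-1})=(1-t^{-1})\prod_{n\ge 1}(1-q^nt^{-1})$, so that
\[
\frac{\Theta(u+\tau,\tau)}{\Theta(u,\tau)}=\frac{(t^{1/2}q^{1/2}-t^{-1/2}q^{-1/2})(1-t^{-1})}{(t^{1/2}-t^{-1/2})(1-qt)}.
\]
Using $t^{1/2}q^{1/2}-t^{-1/2}q^{-1/2}=q^{-1/2}t^{-1/2}(tq-1)$, $t^{1/2}-t^{-1/2}=t^{-1/2}(t-1)$ and $1-t^{-1}=t^{-1}(t-1)$, the fraction collapses to $-q^{-1/2}t^{-1}=j(\tau,u)$. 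This proves $e_\omega=j(\omega,\cdot)$ for all $\omega\in\Lambda_\tau$.

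It remains to identify $j(\omega,u)$ with $\psi(\omega)e^{-2\pi i r(u+\omega/2)}$. Writing $\omega=r\tau+s$ and expanding, $e^{-2\pi i r(u+\omega/2)}=e^{-2\pi i ru}e^{-\pi i r^2\tau}e^{-\pi i rs}=t^{-r}q^{-r^2/2}(-1)^{rs}$, whereas $(-1)^s(-t)^{-r}q^{-r^2/2}=(-1)^{r+s}t^{-r}q^{-r^2/2}$; the two coincide exactly when $\psi(\omega)=(-1)^{r+s+rs}$. Since $(-1)^{r+s+rs}=1$ if and only if $r$ and $s$ are both even, i.e. if and only if $\omega\in 2\Lambda_\tau$, this is precisely the case-split defining $\psi$, and the proof is complete.

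The only genuine computation is the $\tau$-direction quasi-periodicity of the second paragraph — the reindexing of the infinite products and the ensuing cancellation of the prefactors; everything else (the $\omega=1$ case, the cocycle bookkeeping, and the elementary reconciliation of the two closed forms) is formal. So the main, and fairly mild, obstacle is carrying out that step while keeping careful track of the half-integer powers $t^{\pm1/2}$, the shift of summation indices, and the constant $q^{1/12}$.
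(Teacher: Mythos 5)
Your proof is correct. The paper itself supplies no argument here — it simply cites Lang — so you have filled in what the authors chose to delegate to a reference. Your method is the standard one: verify the quasi‑periodicity directly for the two generators $\omega=1$ and $\omega=\tau$ from the product formula, and propagate to all of $\Lambda_\tau$ via the cocycle identity $F(\omega_1+\omega_2,u)=F(\omega_1,u+\omega_2)F(\omega_2,u)$, which both $e_\omega$ (by telescoping) and $j(\omega,\cdot)$ (by the computation with $t\mapsto tq^{r_2}$ and $(r_1+r_2)^2=r_1^2+2r_1r_2+r_2^2$) satisfy. The final reconciliation of the two closed forms — expanding $e^{-2\pi i r(u+\omega/2)}=t^{-r}q^{-r^2/2}(-1)^{rs}$ against $(-1)^s(-t)^{-r}q^{-r^2/2}=(-1)^{r+s}t^{-r}q^{-r^2/2}$ to read off $\psi(\omega)=(-1)^{r+s+rs}$, and observing this equals $1$ precisely when $r,s$ are both even — is also right. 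One small presentational point: to promote the check on generators to all of $\Lambda_\tau$ you implicitly need that a cocycle with $h_0\equiv 1$ is determined by its values on a generating set, including inverses; this follows from $h_{-\omega}(u)=h_\omega(u-\omega)^{-1}$, which you could state in a half‑sentence, but it is a routine observation and does not constitute a gap.
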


\begin{proof} 
	This is well-known \cite{Lang}.
\end{proof}

Let $\R^2$ denote the space of two-dimensional real row vectors and $\tilde G := \R^2 \rtimes \GL^+_2(\R)$ be the semidirect product group in which the operation $\ast$ is given by
$$
(\bx_1,\gamma_1)\ast (\bx_2,\gamma_2) = (\bx_1 + \bx_2\gamma_1^{-1}, \gamma_1\gamma_2)
$$
for any $\bx_i\in \R^2$ and $\gamma_i \in \GL^+_2(\R)$.   Setting $\tfH := \C \times \fH$, one easily checks\footnote{One needs the cocycle condition $j(\g,\tau)\cdot j(\g', \g\tau)=j(\g'\g, \tau)$ where $j(\g,\tau)=c\tau +d$ and $\g=\begin{pmatrix}a&b\\c&d\end{pmatrix}$.} that the map
\bea \label{saction}
\begin{array}{ccccc}
	\tilde G &\times& \tfH &\lra&\tfH \\
	(\bx,\gamma)&,& (u,\tau)&\longmapsto& \biggl(\displaystyle\frac{u+\rho_{\bx\gamma}(\tau)}{c\tau + d}, \gamma\tau\biggr)\\
\end{array}
\eea
defines a left action of $\tilde G$ on $\tfH$ for $\bx = (x_1,x_2)\in \R^2$, where $\rho_\bx(\tau)$ is defined by
$$
\rho_\bx(\tau) := x_1\tau + x_2.
$$ 
The group $\tilde G$ acts on the right by pullback of functions on $\tfH$.
For any function $f(u,\tau)$ on $\tfH$ and any $\tilde\gamma := (\bx,\gamma)\in \tilde G$ we have
$$
\tilde\gamma^\ast f(u,\tau) = f\left(\frac{u + \rho_{\bx\gamma}(\tau)}{c\tau +d},\gamma\tau\right)
$$
Finally, let $\Gamma := \SL_2(\Z)$ and define $\tilde\Gamma := \Z^2 \rtimes \Gamma \subseteq \tilde G$.   Then $\tilde\Gamma$
is easily seen to be a subgroup of $\tilde G$.   

%

\begin{definition} \label{mero}
	We define 
	$_{N}\!\Theta :\tilde \fH=\C \times \fH \lra \C$ by
	\bea \label{thetad}
	_{N}\!\Theta(u,\tau) :=  \Theta(u,\tau)^{N^2}/\Theta(Nu,\tau).
	\eea
Note that $_{N}\!\Theta$ is a meromorphic function on $\tilde \fH$, since the denominator $\Theta(Nu,\tau)$ does not introduce essential singularities; $\Theta$ is a holomorphic function on $\tilde \fH$ and $\Theta(u,\tau)$ has simple zero at each $\omega\in \Lambda_\tau$ and no zeroes outside of $\Lambda_\tau$. 

\end{definition}

\noindent
The main result of this section is the following theorem.

\begin{theorem}\label{vartheta:invariance}
	There is a multiplicative character $\epsilon : \tilde\Gamma \lra {\mathbb \mu}_{12}\subseteq \C^\times$ such that the function $_{N}\!\Theta$ satisfies the relation
	$$
	\tilde\gamma^\ast (_{N}\!\Theta) =\epsilon(\tilde\gamma)\cdot_{N}\!\Theta
	$$
	for all $\tilde\gamma \in \tilde\Gamma$.  More precisely, for $\bx\in \Z^2$,  $T=\begin{pmatrix}1&1\\0&1\end{pmatrix}$, and $S=\begin{pmatrix}0&1\\-1&0\end{pmatrix}$
	we have 
	$$
	\epsilon(\bx, 1) = 1,\ \  \epsilon(0, T) = e^{2\pi i(N^2-1)/12}\ \ \hbox{\rm and}\ \ \epsilon(0,S) = i^{N^2-1}.
	$$
\end{theorem}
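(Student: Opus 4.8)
The plan is to reduce the assertion to a short computation on a generating set of $\tilde\Gamma$ and then exploit the fact that the quasi-period and automorphy factors of $\Theta$ are (at worst quadratic) exponentials in the elliptic variable that cancel once one passes to ${}_{N}\!\Theta=\Theta(u,\tau)^{N^2}/\Theta(Nu,\tau)$. First I would recall that $\tilde\Gamma=\Z^2\rtimes\SL_2(\Z)$ is generated by the translations $(\bx,1)$ with $\bx\in\Z^2$ together with $(0,T)$ and $(0,S)$, since $\Z^2=\langle(1,0),(0,1)\rangle$ and $\SL_2(\Z)=\langle S,T\rangle$. Because \eqref{saction} is a \emph{left} action, pullback of functions on $\tfH$ is a right action of $\tilde\Gamma$; hence if ${}_{N}\!\Theta$ turns out to satisfy $\tilde\gamma^{\ast}({}_{N}\!\Theta)=c(\tilde\gamma)\cdot{}_{N}\!\Theta$ for each generator $\tilde\gamma$ (with $c(\tilde\gamma)\in\C^{\times}$), then for any product $(\tilde\gamma\tilde\delta)^{\ast}({}_{N}\!\Theta)=\tilde\delta^{\ast}(\tilde\gamma^{\ast}{}_{N}\!\Theta)=c(\tilde\gamma)c(\tilde\delta)\cdot{}_{N}\!\Theta$, so by induction on word length ${}_{N}\!\Theta$ is a pullback eigenvector for the whole of $\tilde\Gamma$ and $\epsilon:\tilde\gamma\mapsto c(\tilde\gamma)$ is automatically a homomorphism. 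It then remains only to compute the three eigenvalues and check they lie in $\mu_{12}$.

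For a translation $(\bx,1)$, which acts by $(u,\tau)\mapsto(u+\omega,\tau)$ with $\omega=x_1\tau+x_2\in\Lambda_\tau$, I would substitute Proposition \ref{Theta:transform} into the ratio ${}_{N}\!\Theta(u+\omega,\tau)/{}_{N}\!\Theta(u,\tau)=\bigl(\Theta(u+\omega,\tau)/\Theta(u,\tau)\bigr)^{N^2}/\bigl(\Theta(Nu+N\omega,\tau)/\Theta(Nu,\tau)\bigr)$. The crucial point is that the exponential quasi-period factor is quadratic in the elliptic variable, so the factor $e^{-2\pi iN^{2}x_1(u+\omega/2)}$ produced by the $N^2$-th power in the numerator is cancelled exactly by the one produced by $\Theta(Nu+N\omega,\tau)$ in the denominator, leaving the sign $\psi(\omega)^{N^2}/\psi(N\omega)$; a case check on the parity of $N$ (using $N\omega\in2\Lambda_\tau\iff\omega\in2\Lambda_\tau$ when $N$ is odd) shows this equals $1$. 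Hence $\epsilon(\bx,1)=1$.

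For $T$, which acts by $(u,\tau)\mapsto(u,\tau+1)$, only the prefactor $q^{1/12}=e^{2\pi i\tau/12}$ in the product defining $\Theta$ is changed by $\tau\mapsto\tau+1$, picking up $e^{2\pi i/12}$; thus $\Theta(u,\tau+1)=e^{2\pi i/12}\Theta(u,\tau)$ and so ${}_{N}\!\Theta$ acquires the factor $(e^{2\pi i/12})^{N^2}/e^{2\pi i/12}=e^{2\pi i(N^2-1)/12}$. For $S$, which acts by $(u,\tau)\mapsto(-u/\tau,-1/\tau)$, I would use the classical modular transformation of $\Theta$: since $\Theta(u,\tau)$ equals a nonzero numerical constant times $\vartheta_1(u,\tau)/\eta(\tau)$, where $\vartheta_1$ is the classical odd Jacobi theta function and $\eta$ the Dedekind eta function, the weight-$1/2$ automorphy factors of $\vartheta_1$ and $\eta$ under $\tau\mapsto-1/\tau$ cancel, and combining $\vartheta_1(u/\tau,-1/\tau)=-i(-i\tau)^{1/2}e^{\pi iu^{2}/\tau}\vartheta_1(u,\tau)$ with $\eta(-1/\tau)=(-i\tau)^{1/2}\eta(\tau)$ yields $\Theta(u/\tau,-1/\tau)=-i\,e^{\pi iu^{2}/\tau}\Theta(u,\tau)$; by oddness of $\Theta$ in $u$ this gives $\Theta(-u/\tau,-1/\tau)=i\,e^{\pi iu^{2}/\tau}\Theta(u,\tau)$. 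Applying this with $u$ and with $Nu$, the quadratic exponential $e^{\pi iN^{2}u^{2}/\tau}$ again cancels between the numerator and denominator of ${}_{N}\!\Theta$, leaving $\epsilon(0,S)=i^{N^2}/i=i^{N^2-1}$. Finally $e^{2\pi i/12}$ and $i=e^{2\pi i\cdot3/12}$ lie in $\mu_{12}$, hence so do all three eigenvalues, and by the first paragraph $\epsilon$ extends to a character $\tilde\Gamma\to\mu_{12}$ with the stated values on $(\bx,1)$, $(0,T)$, $(0,S)$.

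The one delicate point I anticipate is pinning down the root of unity $-i$ in the $S$-transformation $\Theta(u/\tau,-1/\tau)=-i\,e^{\pi iu^{2}/\tau}\Theta(u,\tau)$ with the correct branch conventions; a convenient independent check is to compare the leading terms as $u\to0$, where $\Theta(u,\tau)\sim 2\pi i\,u\,\eta(\tau)^{2}$ and $\eta(-1/\tau)^{2}=-i\tau\,\eta(\tau)^{2}$, which indeed forces the constant $-i$. Everything else — the structural reduction, the translation computation, and the $T$-computation — is routine, the recurring mechanism throughout being that in the quotient $\Theta(u,\tau)^{N^2}/\Theta(Nu,\tau)$ every exponential factor, whether linear or quadratic in $u$, disappears and only a twelfth root of unity survives.
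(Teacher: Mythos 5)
Your proof is correct and follows the same overall strategy as the paper: reduce to the generators $(\bx,1)$, $(0,T)$, $(0,S)$ of $\tilde\Gamma$ (the non-vanishing of ${}_{N}\!\Theta$ making the resulting scalar automatically a well-defined character) and verify the three eigenvalues. Your handling of translations via Proposition \ref{Theta:transform} and the parity case-check $\psi(\omega)^{N^2}/\psi(N\omega)=1$, and of $T$ via the $q^{1/12}$ prefactor, matches the paper exactly. The one place you diverge is $S$: the paper derives $\Theta(-u/\tau,-1/\tau)=i\,e^{\pi iu^2/\tau}\Theta(u,\tau)$ from scratch by showing that $\psi(u,\tau)=e^{\pi iu^2/\tau}\Theta(u,\tau)/\Theta(-u/\tau,-1/\tau)$ is an elliptic function without zeros or poles, hence constant, and then evaluating at $u=0$ via $\Theta'(0,\tau)=2\pi i\,\eta(\tau)^2$ and $\eta(\tau)^2=(i/\tau)\eta(-1/\tau)^2$; you instead import the classical transformation laws of $\vartheta_1$ and $\eta$ using $\Theta=i\,\vartheta_1/\eta$. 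Both are correct, and your ``independent check'' via leading terms as $u\to0$ is in fact precisely the paper's evaluation step, so the two arguments converge. (Incidentally your exponent $e^{\pi iu^2/\tau}$ in the $\Theta$-transformation is the correct one, consistent with the paper's own definition of $\psi(u,\tau)$; the displayed line in the paper reading $e^{2\pi iu^2/\tau}$ is a typo that does not affect the ${}_{N}\!\Theta$ conclusion since the quadratic exponential cancels.)
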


\begin{proof}
	Since $(\bx, 1)$, $(0,T)$ and $(0,S)$ generate $\tilde\Gamma$ we need only prove the following four identities:
	\begin{itemize}
		\item[\rm (1)]  $_{N}\!\Theta\left(u + 1,\tau\right)= _{N}\!\Theta(u,\tau)$;
		\item[\rm (2)]  $_{N}\!\Theta\left(u + \tau ,\tau\right)= _{N}\!\Theta(u,\tau)$;
		\item[\rm (3)]  $_{N}\!\Theta\left(u,\tau+1\right)=e^{2\pi i (N^2-1)/12}\cdot_{N}\!\Theta(u,\tau)$; and
		\item[\rm (4)]  $_{N}\!\Theta\left(-\frac{u}{\tau},-\frac{1}{\tau} \right)= i^{N^2-1}\cdot_{N}\!\Theta(u,\tau)$.
	\end{itemize}
	
	To prove (1) and (2) we use Proposition {\ref{Theta:transform}}
	$$
	\Theta(u+r \tau + s,\tau) = (-1)^{r+s+rs}e^{-2\pi i r(u+\frac{r\tau+s}{2})}\cdot \Theta(u,\tau)
	$$
	from which we deduce 
	\begin{itemize}
		\item[\rm (i)] $\Theta(u+1,\tau) = -\Theta(u,\tau)$ and $\Theta(N(u+1),\tau) = (-1)^N\Theta(Nu,\tau)$; 
		\item[\rm (ii)] $\Theta(u+\tau,\tau) = -t^{-1}q^{-1/2}\Theta(u,\tau)$  and  $\Theta(N(u+\tau),\tau) = (-t)^{-N^2}q^{-N^2/2}\Theta(Nu,\tau)$.
	\end{itemize}
	Clearly (i) $\Longrightarrow$ (1), and (ii) $\Longrightarrow$ (2) and (3) follows from inspection of the $q$-expansions.  So only (4) remains to be proved.

	To prove (4) we consider the function
	$$
	\psi(u,\tau) = e^{\pi i u^2/\tau}\cdot\frac{\Theta(u,\tau)}{\Theta\left(-\frac{u}{\tau}, -\frac{1}{\tau}\right)}.
	$$
	A simple calculation using (1) and (2) above shows that $\psi(u+1,\tau) = \psi(u+\tau,\tau)=\psi(u,\tau)$.  Thus $\psi(u,\tau)$ is an elliptic function for the lattice $\Lambda_\tau$.
	But since the numerator and denominator of the above expression for $\psi(u,\tau)$ vanish nowhere outside of $\Lambda_\tau$ and they both have a simple zero at the origin, it follows that
	$\psi(u,\tau)$ has no zeroes or poles in the complex plane. Thus $\psi(u,\tau)$ is a non-zero constant (depending on $\tau$).   To evaluate this constant we calculate $\psi(u,\tau)$ at $u=0$.
	We have
	$$
	\psi(0,\tau) = \displaystyle  \lim_{u\to 0}\frac{\Theta(u,\tau)}{\Theta(-\tfrac{u}{\tau},-\tfrac{1}{\tau})} =\displaystyle -\tau \frac{\Theta^\prime(0,\tau)}{\Theta^\prime(0,-\tfrac{1}{\tau})}
	$$
	Writing 
	$\Theta(u,\tau)	= 2i\sin(\pi u)T(u,\tau)$ where $T(u,\tau)=q^{1/12}\prod_{n>0}(1-q^nt)(1-q^nt^{-1})$, 
	we note that 
	$$
	\Theta^\prime(u,\tau) = 2\pi i\cos(\pi u) \cdot T(u,\tau)  + 2i\sin(\pi u)\cdot T^\prime(u,\tau). 
	$$
	Setting $u=0$ into this identity gives us $\Theta^\prime(0,\tau) = 2\pi i T(0,\tau)$.
	But clearly
	$$
	T(0,\tau) = \eta(\tau)^2
	$$
	where $\eta(\tau)$ is the famous Dedekind $\eta$-function.   Thus
	$$
	\Theta^\prime(0,\tau) = 2\pi i \cdot \eta(\tau)^2  
	$$
	and replacing $\tau$ in this identity by $-1/\tau$ we obtain
	$$
	\Theta^\prime(0,-1/\tau) = 2\pi i \cdot \eta(-1/\tau)^2.
	$$
	It follows now that
	$$
	\psi(0,\tau) = -\tau\cdot\frac{\eta(\tau)^2}{\eta(-1/\tau)^2}
	$$
	and from the well-known identity
	$\eta(\tau)^2 = \frac{i}{\tau} \cdot\eta(-1/\tau)^2$
	we conclude 
	$$
	\psi(0,\tau) = -i.
	$$
	
	We therefore have
	$$
	\Theta\left(-\frac{u}{\tau}, -\frac{1}{\tau}\right) = i e^{2\pi i u^2/\tau} \cdot \Theta(u,\tau)
	$$
	from which it follows at once that
	$$
	_{N}\!\Theta\left(-\frac{u}{\tau}, -\frac{1}{\tau}\right) = \frac{\Theta\left(-\frac{u}{\tau}, -\frac{1}{\tau}\right)^{N^2}} {\Theta\left(-\frac{Nu}{\tau}, -\frac{1}{\tau}\right)} 
	= i^{N^2-1} \frac{\Theta(u,\tau)^{N^2}}{\Theta(Nu,\tau)}  = {i^{N^2-1}} _{N}\!\Theta(u,\tau)
	$$
	and (4) is established.  This completes the proof.
	
\end{proof}

The following corollary follows immediately from Theorem \ref{vartheta:invariance}. Recall that $A(N):=\frac{1}{N}\Z^2/\Z^2$.  
\medskip
\begin{corollary}\label{periodic}
	For $\ba=(a_1+\Z,a_2+\Z) \in A(N)-(0,0)$, define $_{N}\!\Theta_{\ba}: \tfH := \C\times \fH \to \C$ as follows:
	\bea\label{cdiv}
	_{N}\!\Theta_{\ba}(u,\tau):=_{N}\!\Theta (u-a_1\tau-a_2, \tau):=\frac{\Theta(u-a_1\tau-a_2,\tau)^{N^2}}{\Theta(Nu-Na_1\tau-Na_2,\tau)}.
	\eea
	
	(1)
	If $N >0$ is prime to 6, then $_N\Theta(u,\tau)$ is invariant under $\tilde\Gamma=\Z^2 \rtimes \SL_2(\Z)$. In particular $_{N}\!\Theta$ is periodic in $u$ with period $\Lambda_\tau$.
	
	(2) For any positive integer $N$, the function $_N\Theta(u,\tau)^{12}$ is invariant under $\tilde\Gamma=\Z^2 \rtimes \SL_2(\Z)$.
	
	(3) 
	The function $_{N}\!\Theta_{\ba}(u,\tau)^{12}$ is invariant
	under $\tilde \Gamma(N)=\Z^2 \rtimes \Gamma(N)$.
	
\end{corollary}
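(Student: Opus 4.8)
The plan is to derive all three parts from Theorem~\ref{vartheta:invariance}, which gives a character $\epsilon:\tilde\Gamma\to\mu_{12}$ with $\tilde\gamma^\ast({}_N\!\Theta)=\epsilon(\tilde\gamma)\cdot{}_N\!\Theta$, the explicit values $\epsilon(\bx,1)=1$, $\epsilon(0,T)=e^{2\pi i(N^2-1)/12}$, $\epsilon(0,S)=i^{N^2-1}$, and the fact (used already in the proof of that theorem) that $(\bx,1)$, $(0,T)$, $(0,S)$ generate $\tilde\Gamma$. For (1): if $\gcd(N,6)=1$ then $N$ is odd and prime to $3$, so $N^2\equiv1\pmod 8$ and $N^2\equiv1\pmod 3$, hence $N^2\equiv1\pmod{24}$ by the Chinese remainder theorem. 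In particular $12\mid N^2-1$ and $4\mid N^2-1$, so $\epsilon(0,T)=1$ and $\epsilon(0,S)=i^{N^2-1}=1$; since also $\epsilon(\bx,1)=1$ and these elements generate $\tilde\Gamma$, the character $\epsilon$ is trivial and ${}_N\!\Theta$ is $\tilde\Gamma$-invariant. Restricting to $\Z^2\subseteq\tilde\Gamma$, which acts by $(u,\tau)\mapsto(u+\rho_\bx(\tau),\tau)$ with $\rho_\bx(\tau)\in\Lambda_\tau$ for $\bx\in\Z^2$, gives the asserted $\Lambda_\tau$-periodicity in $u$.

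For (2), since $\epsilon$ is valued in the $12$th roots of unity, $\epsilon(\tilde\gamma)^{12}=1$ for every $\tilde\gamma\in\tilde\Gamma$, whence $\tilde\gamma^\ast\bigl({}_N\!\Theta^{12}\bigr)=\bigl(\epsilon(\tilde\gamma)\,{}_N\!\Theta\bigr)^{12}={}_N\!\Theta^{12}$. For (3) — the one genuinely new point — I would first record that by \eqref{cdiv} one has ${}_N\!\Theta_\ba=(-\ba,1)^\ast\,{}_N\!\Theta$, where $\ba\in\tfrac1N\Z^2$ is any lift. Given $\tilde\gamma=(\bx,\gamma)\in\tilde\Gamma(N)$ with $\bx\in\Z^2$ and $\gamma\in\Gamma(N)$, a short computation in the semidirect product $\tilde G=\R^2\rtimes\GL^+_2(\R)$ (using $\gamma\gamma^{-1}=1$) yields the identity
$$(-\ba,1)\ast(\bx,\gamma)=(\bx,\gamma)\ast(-\ba\gamma,1).$$
Combining this with the right-action rule $\tilde\gamma_1^\ast\circ\tilde\gamma_2^\ast=(\tilde\gamma_2\ast\tilde\gamma_1)^\ast$ and with Theorem~\ref{vartheta:invariance} applied to the factor $(\bx,\gamma)\in\tilde\Gamma$, one obtains
$$\tilde\gamma^\ast\,{}_N\!\Theta_\ba=(-\ba\gamma,1)^\ast\bigl((\bx,\gamma)^\ast\,{}_N\!\Theta\bigr)=\epsilon(\bx,\gamma)\cdot(-\ba\gamma,1)^\ast\,{}_N\!\Theta=\epsilon(\bx,\gamma)\cdot{}_N\!\Theta_{\ba\gamma}.$$
Since $\gamma\equiv I\pmod N$ and $\ba\in\tfrac1N\Z^2$, we have $\ba\gamma-\ba\in\Z^2$; and because $\epsilon(\bk,1)=1$ for $\bk\in\Z^2$, the function $(-\bb,1)^\ast\,{}_N\!\Theta$ depends only on $\bb$ modulo $\Z^2$ (this is also the statement that ${}_N\!\Theta_\ba$ is well defined on $A(N)$). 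Hence ${}_N\!\Theta_{\ba\gamma}={}_N\!\Theta_\ba$, so $\tilde\gamma^\ast\,{}_N\!\Theta_\ba=\epsilon(\bx,\gamma)\cdot{}_N\!\Theta_\ba$, and raising to the $12$th power removes $\epsilon(\bx,\gamma)$ exactly as in (2).

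The only real obstacle here is organizational rather than conceptual: one must pin down the right-action convention consistently and verify the conjugation-type identity $(-\ba,1)\ast(\bx,\gamma)=(\bx,\gamma)\ast(-\ba\gamma,1)$ carefully in $\tilde G$ (this is exactly the point where $\gamma\in\Gamma(N)$ enters, via $\ba\gamma\equiv\ba\bmod\Z^2$). Once that bookkeeping is straightened out, parts (1)--(3) are formal consequences of Theorem~\ref{vartheta:invariance} together with the $12$-torsion of $\epsilon$.
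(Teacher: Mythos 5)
Your proof is correct. Parts (1) and (2) are essentially identical to the paper's argument: both read off triviality of $\epsilon$ on the generators $(\bx,1)$, $(0,T)$, $(0,S)$, the only cosmetic difference being that you justify $24\mid N^2-1$ via CRT modulo $8$ and $3$ while the paper just cites $N\equiv\pm1\pmod 6$; and part (2) is the same observation that $\epsilon$ is $12$-torsion.

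For part (3) you take a genuinely different route. The paper verifies $\tilde\Gamma(N)$-invariance by a direct coordinate computation: for the translation part it plugs $u+r\tau+s$ into the definition, and for $\gamma=\begin{pmatrix}1+Nk_1 & Nk_2\\ Nk_3 & 1+Nk_4\end{pmatrix}$ it expands $\frac{u}{c\tau+d}-a_1\frac{a\tau+b}{c\tau+d}-a_2$, collects the error terms $N(a_ik_j)\in\Z$, and then invokes part (2) to absorb them. Your approach instead packages $_{N}\!\Theta_\ba=(-\ba,1)^\ast\,_{N}\!\Theta$ and uses the conjugation identity
$$(-\ba,1)\ast(\bx,\gamma)=(\bx,\gamma)\ast(-\ba\gamma,1)$$
in $\tilde G$ together with the contravariance of pullback, reducing everything to the single observation $\ba\gamma\equiv\ba\pmod{\Z^2}$ for $\gamma\in\Gamma(N)$. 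Both are valid; yours is structurally cleaner and makes the role of the congruence subgroup transparent (it is exactly what makes $\ba\gamma-\ba$ integral), whereas the paper's is self-contained at the level of coordinates and does not require pinning down the right-action convention. One small remark: you could streamline slightly by applying the conjugation identity directly to $((-\ba,1)\ast(\bx,\gamma))^\ast\,_{N}\!\Theta$ without the intermediate reformulation, but as written the chain of equalities is correct.
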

\begin{proof}
	(1) Since $N >0$ is prime to 6, $N=6k+1$ or $N=6k+5$ for some integer $k$. Then
	\be
	\epsilon(0, T) = e^{2\pi i({N}^2-1)/12}=1\ \ \hbox{\rm and}\ \ \epsilon(0,S) = i^{{N}^2-1}=1.
	\ee
	Thus the result follows.
	
	(2) It is clear, since $\epsilon$ has values in the multiplicative group 
	of $12th$ roots of unity.

	%
	%

	(3) First, we have
	\be
	_{N}\!\Theta_{\ba} (u + r\tau +s ,\tau)^{12} = _{N}\!\Theta (u-a_1\tau-a_2+ r\tau +s , \tau)^{12}=_{N}\!\Theta_{\ba} (u,\tau)^{12}, \quad r\tau +s \in \Lambda_\tau=\Z\tau+\Z.
	\ee
	Second, we have
	\be
	&&_{N}\!\Theta_\ba \left(\frac{u}{c\tau+d}, \frac{a\tau+b}{c\tau+d}\right)^{12} \\
		&=& _{N}\!\Theta \left(\frac{u}{c\tau+d}-a_1\cdot \frac{a\tau+b}{c\tau+d}-a_2, \frac{a\tau+b}{c\tau+d}\right)^{12}
	\quad (a=1+Nk_1, b=Nk_2, c=Nk_3, d=1+Nk_4)\\
	&=&   _{N}\!\Theta \left(\frac{u-a_1\tau-a_2}{c\tau+d}-\frac{a_1Nk_1\tau+a_2Nk_4}{c\tau+d}-\frac{a_2Nk_3 \tau + a_1Nk_2}{c\tau+d}, \frac{a\tau+b}{c\tau+d}\right)^{12} \\
         &=&  _{N}\!\Theta (u-a_1\tau-a_2 - a_1Nk_1\tau-a_2Nk_4-a_2Nk_3 \tau - a_1Nk_2, \tau)^{12} \\
	&=&   	_{N}\!\Theta_{\ba} (u,\tau)^{12} \quad (a_1Nk_1, a_2Nk_4,a_2Nk_3,a_1Nk_2 \in \Z)
	\ee
	for $\gamma=\begin{pmatrix} a & b \\ c & d \end{pmatrix} \in \Gamma(N)$.
	This implies that $ _{N}\!\Theta_{\ba} (u,\tau)^{12}$ is invariant under $\tilde \Gamma(N)=\Z^2 \rtimes \Gamma(N)$, since the action of $\tilde \Gamma(N)=\Z^2 \rtimes \Gamma(N)$ is generated by the above two actions of $\Gamma(N)$ and $\Lambda_\tau$.
\end{proof}

The functions $_{N}\!\Theta_\ba$ (for $N$ prime to 6) and $_{N}\!\Theta^{12}_\ba$ are meromorphic on $\tilde \fH$ (as explained in Definition \ref{mero}) and invariant under $\tilde \Gamma$ (Corollary \ref{periodic}).  Thus, they are meromorphic functions on the quotient complex manifold $\tilde \fH/\tilde \Gamma$. Similarly,  $_{N}\!\Theta^{12}_\ba$ is a meromorphic function on $\cE_N:=(\C \times \fH)/ \tilde \Gamma(N)$. 
We recall the natural quotient map \eqref{univell}
\be
\pi_N: \cE_N:=(\C \times \fH)/ \tilde \Gamma(N) \to Y_N:=\fH/\Gamma(N)
\ee
and the elliptic surface $\overline \cE(N)$ over $X(N)$ for $N\geq 3$ in the introduction; also recall $\cE(N):=Y(N)\times_{X(N)}\overline \cE(N)$, which is an elliptic curve over $Y(N)$.
We show that $_{N}\!\Theta^{12}_\ba:\cE_N\simeq \cE(N)(\C) \to \C$ extends to a meromorphic function on the compact manifold $\overline \cE(N)(\C)$; see Proposition \ref{rationality} in Appendix \ref{appendix}.
By GAGA,
for $N\geq 3$, the function $_{N}\!\Theta^{12}_\ba$ is a rational function on $\overline \cE(N)$, i.e. $_{N}\!\Theta^{12}_\ba$ belongs to $F_N=k_N(E(N))$.   

Let $N\geq 3$. In order to study the divisors of $_{N}\!\Theta^{12}$ and $_{N}\!\Theta^{12}_\ba$, we define
\bea \label{divs}
D_\bx:=\{[(x_1\tau+x_2, \tau)]_{\tilde \Gamma(N)} \in \cE_N: \tau \in \fH \} \subset \cE_N.
\eea
for $\bx \in A(N)$. Note that $D_\bx$ is the image of the $N$-torsion section $s_\bx$ for the map $\pi_N:\cE_N \to Y_N$ defined by $[\tau]_{\Gamma(N)} \mapsto s_\bx([\tau]_{\Gamma(N)}):=[(x_1\tau+x_2, \tau)]_{\tilde \Gamma(N)}$. Here $[\cdot ]_G$ means the $G$-coset.
 A crucial fact is that, according to the discussion above, \eqref{thetad}, and Corollary \ref{periodic}, $_{N}\!\Theta^{12}=\left( \Theta(u,\tau)^{N^2}/\Theta(Nu,\tau) \right)^{12}$ is a meromorphic function on $\cE_N$ with divisor given by
$$
12 \cdot \left( N^2\cdot D_{\mathbf{0}}  - \sum_{\bx\in A(N)} D_\bx\right)
$$
where $\mathbf{0} =(0+\Z,0+\Z) \in A(N)$, and thus $_{N}\!\Theta_{\ba}^{12}$  in \eqref{cdiv} is a meromorphic function on $\cE_N$ whose divisor is given by
\bea \label{divisors}
12 \cdot \left( N^2\cdot  D_\ba  - \sum_{\bx\in A(N)} {D_\bx}\right).
\eea.
 
 

\begin{proposition}\label{dcomputation}
	The function $(_{N}\!\Theta_{\ba}/_{N}\!\Theta_{\bb})^{12}$ defines a meromorphic function\footnote{If one defined $\Theta_{\ba}$ in a similar way without $N$, then $\Theta_{\ba}^{12}/\Theta_{\bb}^{12}$ would not be invariant under $\tilde \Gamma(N)$: for example, by Proposition \ref{Theta:transform}, we have
\be
\frac{\Theta_{\ba}(u+\tau,\tau)}{\Theta_{\bb}(u+\tau,\tau)}= \frac{e^{2\pi i (a_1 \tau +a_2 -\tau/2)}}{e^{2\pi i (b_1 \tau +b_2 -\tau/2)} }\frac{\Theta_{\ba}(u,\tau)}{\Theta_{\bb}(u,\tau)}\neq \frac{\Theta_{\ba}(u,\tau)}{\Theta_{\bb}(u,\tau)}.
\ee
}
on $\cE_N$ whose divisor is given by
	\bea\label{tdiv}
	(12N^2)\cdot \left( D_\ba - D_\bb \right)
	\eea
	where
	$D_\ba=\{[(a_1\tau+a_2, \tau)]_{\tilde \Gamma(N)} \in \cE_N: \tau \in \fH \} \subset \cE_N$ is defined in \eqref{divs}.
	
\end{proposition}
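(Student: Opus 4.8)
The plan is to deduce everything directly from the divisor formula \eqref{divisors} that has already been established for $_{N}\!\Theta_{\ba}^{12}$ on $\cE_N$. First I would record that by Corollary \ref{periodic}(3) both $_{N}\!\Theta_{\ba}^{12}$ and $_{N}\!\Theta_{\bb}^{12}$ are $\tilde\Gamma(N)$-invariant meromorphic functions on $\C\times\fH$, hence descend to meromorphic functions on the quotient complex manifold $\cE_N=(\C\times\fH)/\tilde\Gamma(N)$; neither is identically zero since $\Theta$ is not, so the ratio $(_{N}\!\Theta_{\ba}/_{N}\!\Theta_{\bb})^{12} = {}_{N}\!\Theta_{\ba}^{12}/{}_{N}\!\Theta_{\bb}^{12}$ is a well-defined (non-zero) meromorphic function on $\cE_N$. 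This takes care of the meromorphy claim.

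Next I would compute its divisor. Since the divisor of a ratio is the difference of divisors, I apply \eqref{divisors} twice:
\bea
\operatorname{Div}\left( (_{N}\!\Theta_{\ba}/_{N}\!\Theta_{\bb})^{12} \right)
&=& \operatorname{Div}({}_{N}\!\Theta_{\ba}^{12}) - \operatorname{Div}({}_{N}\!\Theta_{\bb}^{12}) \nonumber \\
&=& 12\left( N^2 D_\ba - \sum_{\bx\in A(N)} D_\bx \right) - 12\left( N^2 D_\bb - \sum_{\bx\in A(N)} D_\bx \right) \nonumber \\
&=& 12 N^2 (D_\ba - D_\bb). \nonumber
\eea
The sums $\sum_{\bx} D_\bx$ cancel, which is the whole point of passing from $_{N}\!\Theta^{12}$ to the normalized ratio, and $12N^2\cdot(D_\ba - D_\bb)$ is exactly \eqref{tdiv}.

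Honestly, there is no real obstacle here: the proposition is a bookkeeping consequence of the divisor computation for $_{N}\!\Theta_{\ba}^{12}$ done just before the statement, together with the $\tilde\Gamma(N)$-invariance from Corollary \ref{periodic}(3). The one point worth a sentence of care is the footnote's concern: I would note explicitly that this cancellation genuinely requires the factor $_{N}\!\Theta$ built with the $N$ (i.e.\ $\Theta(u,\tau)^{N^2}/\Theta(Nu,\tau)$) rather than a naive $\Theta_{\ba}^{12}/\Theta_{\bb}^{12}$, since the latter fails to be $\tilde\Gamma(N)$-invariant by the quasi-periodicity in Proposition \ref{Theta:transform}, so the ratio would not even define a function on $\cE_N$. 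With that remark in place, the proof is complete; it amounts to invoking \eqref{divisors} and subtracting.
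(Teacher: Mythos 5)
Your argument is correct and is exactly the paper's argument: the paper's proof simply says "This is clear from Corollary \ref{periodic} and \eqref{divisors}," and you have spelled out precisely that — invariance under $\tilde\Gamma(N)$ from Corollary \ref{periodic}(3) gives a well-defined meromorphic function on $\cE_N$, and subtracting two instances of \eqref{divisors} gives the divisor $12N^2(D_\ba - D_\bb)$.
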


\begin{proof}
This is clear from Corollary \ref{periodic} and \eqref{divisors}.
\end{proof}

%

\begin{lemma}\label{Theta:siegel} For $\ba \in A(N)-(0,0)$ and $(a_1,a_2) \in \frac{1}{N}\Z^2$ the least non-negative representative of $\ba$, we have
	\begin{itemize}
		\item[\rm (a)]  $\displaystyle -e^{\pi i a_2} q^{\frac{a_1^2}{2}}\cdot \Theta\left(a_1\tau + a_2,\tau \right) = g_\ba(\tau);$ and
		\item[\rm (b)]  $\displaystyle \frac{\Theta(Nu + Na_1\tau+Na_2, \tau)}{ \Theta(Nu,\tau)} = (-1)^{Na_2} (-t^{N})^{-Na_1}q^{-\frac{N^2a_1^2}{2}}$, where $t=e^{2\pi iu}$ and $q=e^{2\pi i \tau}$.
	\end{itemize}
	
\end{lemma}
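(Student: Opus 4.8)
The plan is to prove both parts by a direct computation with the product expansion of $\Theta$, with part (b) reducing immediately to the quasi-periodicity already established in Proposition \ref{Theta:transform}.

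For part (a), I would substitute $u = a_1\tau + a_2$ into
$$
\Theta(u,\tau) = q^{\frac{1}{12}}(t^{1/2}-t^{-1/2})\prod_{n=1}^\infty (1-q^nt)(1-q^nt^{-1}),
$$
and use $t = e^{2\pi i(a_1\tau+a_2)} = q^{a_1}e^{2\pi i a_2}$, so that $t^{1/2} = q^{a_1/2}e^{\pi i a_2}$, $q^nt = q^{n+a_1}e^{2\pi i a_2}$, and $q^nt^{-1} = q^{n-a_1}e^{-2\pi i a_2}$. The leading binomial factors as $t^{1/2}-t^{-1/2} = -q^{-a_1/2}e^{-\pi i a_2}\bigl(1 - q^{a_1}e^{2\pi i a_2}\bigr)$, and the factor $1 - q^{a_1}e^{2\pi i a_2}$ is precisely the $n=0$ term missing from the first product in Definition \ref{Siegelunits}; absorbing it converts $\prod_{n\ge 1}$ into $\prod_{n\ge 0}$. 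Hence $\Theta(a_1\tau+a_2,\tau)$ equals $-e^{-\pi i a_2}q^{\frac{1}{12}-\frac{a_1}{2}}$ times the same double product that occurs in $g_\ba$. Dividing, the identity (a) reduces to the scalar equality $\frac{1}{2}B_2(a_1) = \frac{a_1^2}{2} + \frac{1}{12} - \frac{a_1}{2}$, which holds because $B_2(x) = x^2 - x + \frac{1}{6}$; rearranging gives $-e^{\pi i a_2}q^{a_1^2/2}\Theta(a_1\tau+a_2,\tau) = g_\ba(\tau)$.

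For part (b), since $\ba \in A(N)-(0,0)$ its least non-negative representative satisfies $(a_1,a_2)\in\frac{1}{N}\Z^2$, so $\omega := Na_1\tau + Na_2$ lies in $\Lambda_\tau = \Z\tau + \Z$ with $r := Na_1$ and $s := Na_2$ integers. Applying Proposition \ref{Theta:transform} with the variable $u$ replaced by $Nu$, and noting that then $e^{2\pi i(Nu)} = t^N$, yields directly
$$
\frac{\Theta(Nu+Na_1\tau+Na_2,\tau)}{\Theta(Nu,\tau)} = (-1)^{Na_2}(-t^N)^{-Na_1}q^{-(Na_1)^2/2},
$$
which is exactly the asserted formula.

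I do not expect any serious obstacle: part (b) is an immediate specialization of Proposition \ref{Theta:transform}, and the only bookkeeping in part (a) is matching the three $q$-exponents (the $\tfrac{1}{12}$ coming from $\Theta$, the target $\tfrac{a_1^2}{2}$, and the $\tfrac{1}{2}B_2(a_1)$ in $g_\ba$), which collapses to the one-line identity above. The genuine content of the lemma is simply isolating the precise normalization constants relating Siegel units to $\Theta$, which will be needed later to keep track of denominators in the divisor and residue computations.
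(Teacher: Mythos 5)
Your proof is correct and follows essentially the same route as the paper: for (a) you substitute $u=a_1\tau+a_2$, factor $t^{1/2}-t^{-1/2}=-q^{-a_1/2}e^{-\pi i a_2}(1-q^{a_1}e^{2\pi i a_2})$ to absorb the $n=0$ term into the first product, and match the remaining $q$-exponent via $\tfrac12 B_2(a_1)=\tfrac{a_1^2}{2}-\tfrac{a_1}{2}+\tfrac1{12}$; for (b) you specialize Proposition \ref{Theta:transform} with $u\mapsto Nu$ and $\omega=Na_1\tau+Na_2\in\Lambda_\tau$. This matches the paper's proof exactly.
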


\begin{proof}  Both of these are straightforward calculations.   For (a) we have  
	$$
	\begin{array}{rcl}
		\Theta\left(a_1\tau + a_2,\tau \right) &=&\displaystyle q^{\frac{1}{12}}\left( e^{\pi i a_2}q^{\frac{a_1}{2}} - e^{-\pi i a_2}q^{\frac{-a_1}{2}}\right)
		\prod_{n\ge 1}\left( 1-q^{n+a_1}e^{2\pi i a_2}\right)\left( 1-q^{n-a_1}e^{-2\pi i a_2}\right)\\
		&=&\displaystyle -q^{\frac{1}{12}}e^{-\pi i a_2}q^{\frac{-a_1}{2}} \prod_{n\ge 0}\left( 1-q^{n+a_1}e^{2\pi i a_2}\right)\prod_{n\ge 1} \left( 1-q^{n-a_1}e^{-2\pi i a_2}\right)\\
		&=&\displaystyle -e^{-\pi i a_2}q^{-\frac{a_1^2}{2}}q^{\frac{1}{2}(a_1^2-a_1+\frac{1}{6})} \prod_{n\ge 0}\left( 1-q^{n+a_1}e^{2\pi i a_2}\right)\prod_{n\ge 1} \left( 1-q^{n-a_1}e^{-2\pi i a_2}\right)\\
		&=&-e^{-\pi i a_2}q^{-\frac{a_1^2}{2}}g_\ba
	\end{array}
	$$
	The proof of (b) is a simple substitution in Proposition \ref{Theta:transform} using the fact that $Na_1, Na_2\in \Z$. 
\end{proof}

\begin{proposition}\label{vartheta:siegelspecial}
For any non-zero $\ba,\bb \in A(N)$ the meromorphic function  $(_{N}\!\Theta_{\ba}/_{N}\!\Theta_{\bb})^{12}$ on $\cE_N$ is holomorphic on the $\mathbf{0}$-section $s_{\mathbf{0}}$, and its pullback to $Y_N$ along $s_{\mathbf{0}}$ is
$
 \left(g_\ba / g_\bb\right)^{12N^2}.
$
\end{proposition}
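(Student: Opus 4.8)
The plan is to treat the two assertions separately: holomorphy along $s_{\mathbf 0}$ comes for free from the divisor computation already in hand, while the identification of the pullback requires a short local analysis at $u=0$ on $\tfH$. For holomorphy: by Proposition~\ref{dcomputation} the meromorphic function $({}_{N}\!\Theta_{\ba}/{}_{N}\!\Theta_{\bb})^{12}$ on $\cE_N$ has divisor $(12N^2)(D_\ba-D_\bb)$, which is supported on $D_\ba\cup D_\bb$. Since $\ba,\bb$ are nonzero in $A(N)$ and the sections $D_\bx$, $\bx\in A(N)$, are pairwise disjoint (as was implicitly used when writing down the divisor of ${}_{N}\!\Theta^{12}$), both $D_\ba$ and $D_\bb$ are disjoint from the zero section $D_{\mathbf 0}=s_{\mathbf 0}(Y_N)$. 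Hence $({}_{N}\!\Theta_{\ba}/{}_{N}\!\Theta_{\bb})^{12}$ is holomorphic and nowhere vanishing along $s_{\mathbf 0}$, so its pullback $s_{\mathbf 0}^{*}$ is an invertible function on $Y_N$.

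To identify that function I would lift to $\tfH$, where $s_{\mathbf 0}$ becomes $\tau\mapsto(0,\tau)$; then $s_{\mathbf 0}^{*}\bigl(({}_{N}\!\Theta_{\ba}/{}_{N}\!\Theta_{\bb})^{12}\bigr)$ is the $\Gamma(N)$-invariant function on $\fH$ sending $\tau$ to $\lim_{u\to 0}\bigl({}_{N}\!\Theta_{\ba}(u,\tau)/{}_{N}\!\Theta_{\bb}(u,\tau)\bigr)^{12}$, the limit existing by the previous paragraph. I would compute the limit from the factorisation
\[
\frac{{}_{N}\!\Theta_{\ba}(u,\tau)}{{}_{N}\!\Theta_{\bb}(u,\tau)}
=\left(\frac{\Theta(u-a_1\tau-a_2,\tau)}{\Theta(u-b_1\tau-b_2,\tau)}\right)^{N^2}
\cdot\frac{\Theta(Nu-Nb_1\tau-Nb_2,\tau)/\Theta(Nu,\tau)}{\Theta(Nu-Na_1\tau-Na_2,\tau)/\Theta(Nu,\tau)},
\]
taking $(a_i),(b_i)$ to be the least nonnegative representatives, which is harmless since ${}_{N}\!\Theta_{\ba}$ is $\Lambda_\tau$-periodic in $u$. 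The first factor is holomorphic and nonzero at $u=0$ (because $\ba,\bb\ne 0$); using that $\Theta$ is odd in $u$ and applying Lemma~\ref{Theta:siegel}(a), its value there is $(\text{root of unity})\cdot q^{-N^2(a_1^2-b_1^2)/2}(g_\ba/g_\bb)^{N^2}$. In the second factor the numerator and denominator of the raw quotient each have a simple zero at $u=0$, but after renormalising each by $\Theta(Nu,\tau)$ they become nowhere vanishing holomorphic functions of $u$ given explicitly by Lemma~\ref{Theta:siegel}(b) (equivalently Proposition~\ref{Theta:transform}); evaluating at $u=0$, where $e^{2\pi i u}=1$, gives $(\text{root of unity})\cdot q^{+N^2(a_1^2-b_1^2)/2}$. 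Multiplying, the powers of $q$ cancel and one is left with $\zeta\,(g_\ba/g_\bb)^{N^2}$ for a single sign $\zeta\in\{\pm1\}$ depending only on $N,\ba,\bb$. Raising to the twelfth power kills $\zeta$ and produces $(g_\ba/g_\bb)^{12N^2}$, which lies in $\cR_N^{\times}\subset k_N^{\times}$ and hence represents a function on $Y_N$, as required.

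I expect the only delicate point to be this second step, and specifically the $0/0$ indeterminacy at the zero section: since the divisor of ${}_{N}\!\Theta_{\ba}^{12}$ contains the term $-12\,D_{\mathbf 0}$, neither ${}_{N}\!\Theta_{\ba}$ nor its denominator $\Theta(Nu,\tau)$ can be specialised at $u=0$ on its own. The way around this is to pass to the ratio first and to renormalise the $\Theta(Nu,\tau)$-factors before setting $u=0$; after that the remaining work is only the bookkeeping of checking that the spurious roots of unity disappear on taking the twelfth power and that the two powers of $q$ produced by the two factors are opposite, hence cancel. Everything else is a routine substitution into Proposition~\ref{Theta:transform} and Lemma~\ref{Theta:siegel}.
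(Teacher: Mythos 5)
Your proposal is correct and follows essentially the same route as the paper: split the ratio $({}_N\!\Theta_\ba/{}_N\!\Theta_\bb)$ into the factor $\left(\Theta(\cdot)/\Theta(\cdot)\right)^{N^2}$, handled by Lemma~\ref{Theta:siegel}(a), and the factor involving $\Theta(N\cdot)$, handled by Lemma~\ref{Theta:siegel}(b) (equivalently Proposition~\ref{Theta:transform}), then observe that the two resulting $q$-powers cancel and the leftover sign dies upon raising to the twelfth power. The paper's proof simply performs this limit computation without spelling out the preliminary holomorphy via the divisor, and it writes the argument with $u+a_1\tau+a_2$ rather than $u-a_1\tau-a_2$ (a harmless sign since $g_{-\ba}^{12N^2}=g_\ba^{12N^2}$); your explicit handling of the $0/0$ issue by dividing both $\Theta(N\cdot)$-terms by $\Theta(Nu,\tau)$ before setting $u=0$ is the same computation, just presented a little more transparently.
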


\begin{proof}  
	Recall the definition $_{N}\!\Theta(u,\tau) :=  \Theta(u,\tau)^{N^2}/\Theta(Nu,\tau)$.
	This proposition follows from the following computation: from lemma \ref{Theta:siegel} we have
	
	$$
	\begin{array}{rcl}
		\displaystyle \frac{_{N}\!\Theta(u+ a_1\tau + a_2,\tau)}{_{N}\!\Theta(u+b_1\tau+b_2,\tau)}\biggm|_{u=0} 
		&=& \displaystyle \frac{\Theta(a_1\tau + a_2,\tau)^{N^2}}{\Theta(b_1\tau +b_2,\tau)^{N^2}} \cdot \frac{\Theta(Nu+Nb_1\tau + Nb_2,\tau)}{\Theta(Nu + Na_1\tau +Na_2,\tau)}\biggm|_{u=0}\\
		&=& \displaystyle \left(\frac{e^{-\pi i a_2} q^{-\frac{a_1^2}{2}}g_\ba}{e^{-\pi i b_2} q^{-\frac{b_1^2}{2}}g_\bb}\right)^{N^2}\cdot \frac{\Theta(Nu+Nb_1\tau + Nb_2,\tau)}{\Theta(Nu + Na_1\tau +Na_2,\tau)}\biggm|_{u=0}\\
		&=& \displaystyle \left(\frac{e^{-\pi i a_2} q^{-\frac{a_1^2}{2}}g_\ba}{e^{-\pi i b_2} q^{-\frac{b_1^2}{2}}g_\bb}\right)^{N^2}\cdot 
		\frac{(-1)^{Nb_2} (-t^{N})^{-Nb_1}q^{-\frac{N^2b_1^2}{2}}}{(-1)^{Na_2} (-t^{N})^{-Na_1}q^{-\frac{N^2a_1^2}{2}}}\biggm|_{t=1}\\
		&=& \displaystyle (-1)^{N(a_1+a_2 + b_1+ b_2) + N^2(a_2+b_2)}\left(\frac{g_\ba}{g_\bb}\right)^{N^2}.\\
	\end{array}
	$$
\end{proof}
\begin{corollary}\label{TS}
	For $\ba=(a_1+\Z,a_2+\Z), \bb=(b_1+\Z,b_2+\Z) \in \frac{1}{N}\Z^2/\Z^2-(0,0)$ and $\bx=(x_1+\Z,x_2+\Z) \in \frac{1}{N}\Z^2/\Z^2-(0,0)$ which is different from $\ba,\bb$, we have
	\be
	\frac{_{N}\!\Theta ((x_1-a_1)\tau+(x_2-a_2), \tau)^{12}}{_{N}\!\Theta ((x_1-b_1)\tau+(x_2-b_2), \tau)^{12}} =\left(\frac{g_{\bx-\ba}(\tau)}{g_{\bx-\bb}(\tau)}\right)^{12N^2}.
	\ee
\end{corollary}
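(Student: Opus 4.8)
The plan is to deduce this from Proposition~\ref{vartheta:siegelspecial} by a translation on the elliptic curve $\cE_N$. First note that although the two factors ${}_{N}\!\Theta_{\ba}^{12}$ and ${}_{N}\!\Theta_{\bb}^{12}$ individually have poles of order $12$ along $D_{\bx}$ (so the left-hand side cannot be read as a naive pointwise quotient), the function $\bigl({}_{N}\!\Theta_{\ba}/{}_{N}\!\Theta_{\bb}\bigr)^{12}$ is meromorphic on $\cE_N$ with divisor $12N^2(D_{\ba}-D_{\bb})$ by \eqref{divisors}, and hence is holomorphic and nowhere vanishing along $D_{\bx}$ since $\bx\ne\ba,\bb$; its restriction to the section $s_{\bx}$ is therefore a well-defined element of $k_N^\times$, and that restriction is exactly the left-hand side of the statement, because $s_{\bx}$ is the image of $u=x_1\tau+x_2$ and ${}_{N}\!\Theta_{\ba}(x_1\tau+x_2,\tau)={}_{N}\!\Theta\bigl((x_1-a_1)\tau+(x_2-a_2),\tau\bigr)$ by the definition ${}_{N}\!\Theta_{\ba}(u,\tau)={}_{N}\!\Theta(u-a_1\tau-a_2,\tau)$ (Corollary~\ref{periodic}).

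Now I would translate by $s_{\bx}$: substituting $u\mapsto u+x_1\tau+x_2$ and using the $\Lambda_\tau$-periodicity of ${}_{N}\!\Theta^{12}$ (Corollary~\ref{periodic}(2)) to replace the representative $(a_1-x_1,a_2-x_2)$ of $\ba-\bx$ by its least non-negative one, one finds ${}_{N}\!\Theta_{\ba}(u+x_1\tau+x_2,\tau)^{12}={}_{N}\!\Theta_{\ba-\bx}(u,\tau)^{12}$, and likewise with $\bb$. Since translation by $s_{\bx}$ sends the zero-section $s_{\mathbf{0}}$ to $s_{\bx}$, the restriction of $\bigl({}_{N}\!\Theta_{\ba}/{}_{N}\!\Theta_{\bb}\bigr)^{12}$ along $s_{\bx}$ equals the restriction of $\bigl({}_{N}\!\Theta_{\ba-\bx}/{}_{N}\!\Theta_{\bb-\bx}\bigr)^{12}$ along $s_{\mathbf{0}}$. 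As $\bx\ne\ba$ and $\bx\ne\bb$, both $\ba-\bx$ and $\bb-\bx$ are nonzero in $A(N)$, so Proposition~\ref{vartheta:siegelspecial} applies verbatim and gives $\bigl(g_{\ba-\bx}/g_{\bb-\bx}\bigr)^{12N^2}$.

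Finally I would match this with $\bigl(g_{\bx-\ba}/g_{\bx-\bb}\bigr)^{12N^2}$, which reduces to the elementary fact $g_{\bc}^{12N^2}=g_{-\bc}^{12N^2}$ for every $\bc\in A(N)\setminus\{\mathbf{0}\}$: a direct computation from Definition~\ref{Siegelunits} (using $B_2(1-x)=B_2(x)$ and reindexing the two infinite products) shows $g_{-\bc}=g_{\bc}$ when the first coordinate of $\bc$ is nonzero, while $g_{-\bc}/g_{\bc}$ is a root of unity of order dividing $2N$ when that coordinate vanishes, and such a root of unity is killed on raising to the $12N^2$-th power. I do not expect a genuine obstacle here: all of the analytic input is contained in Proposition~\ref{vartheta:siegelspecial}, and the only points needing care are reading the left-hand side as the restriction of the quotient $\bigl({}_{N}\!\Theta_{\ba}/{}_{N}\!\Theta_{\bb}\bigr)^{12}$ (so that the poles along $D_{\bx}$ cancel before restricting) and checking that the Siegel-unit normalization matches on the two sides.
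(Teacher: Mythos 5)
Your proof is correct, and since the paper leaves Corollary~\ref{TS} without an explicit argument, it is a perfectly good derivation. It is morally the same as what the paper intends: everything reduces to the displayed computation in the proof of Proposition~\ref{vartheta:siegelspecial} together with the $\Lambda_\tau$-periodicity of ${}_N\!\Theta^{12}$. The only genuine difference is a small detour. The direct route is to notice that the left-hand side of the corollary, after replacing $(x_1-a_1,\,x_2-a_2)$ and $(x_1-b_1,\,x_2-b_2)$ by the least non-negative representatives of $\bx-\ba$ and $\bx-\bb$ (permissible by Corollary~\ref{periodic}(2)), is literally the quantity computed in the proof of Proposition~\ref{vartheta:siegelspecial} with $\ba,\bb$ replaced by $\bx-\ba,\bx-\bb$, and that the sign there dies on raising to the $12$th power; this delivers $(g_{\bx-\ba}/g_{\bx-\bb})^{12N^2}$ immediately. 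You instead translate by $s_{\bx}$, which naturally brings out the pair $\ba-\bx,\,\bb-\bx$ rather than $\bx-\ba,\,\bx-\bb$, so you get $(g_{\ba-\bx}/g_{\bb-\bx})^{12N^2}$ and then need the extra normalization fact $g_{\bc}^{12N^2}=g_{-\bc}^{12N^2}$ for nonzero $\bc\in A(N)$. That fact is true and your sketch of it is correct: when the first coordinate of $\bc$ is nonzero the two $q$-products reindex into each other and $B_2(1-x)=B_2(x)$ handles the $q$-power, giving $g_{-\bc}=g_\bc$ on the nose; when the first coordinate vanishes one finds $g_{-\bc}/g_\bc=-e^{-2\pi i c_2}$, a root of unity of order dividing $2N$, killed by the $12N^2$-th power (this is also consistent with the identity $g_{-\ba}=-e^{2\pi i a_2}g_\ba$ recorded in Section~\ref{ManinRelsProof}, up to a sign in the exponent that does not affect the conclusion). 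Your preliminary remark that the quotient $({}_N\!\Theta_\ba/{}_N\!\Theta_\bb)^{12}$ must be formed before restricting to $D_\bx$ so that the order-$12$ poles along $D_\bx$ cancel is a good point of care, and the hypothesis $\bx\ne\ba,\bb$ is used exactly where you use it (nonvanishing along $D_\bx$, and nonzero inputs to Proposition~\ref{vartheta:siegelspecial}). In short: correct, same underlying computation, one avoidable but harmless extra step.
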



\subsection{The Manin relations}\label{ManinRelsProof}
For simplicity of notation, we define:
\be
\theta_{\ba}(u, \tau):=_{N}\!\Theta_{\ba}(u,\tau)^{12},\quad
_{N}\!\Theta_{\ba}(u,\tau)=_{N}\!\Theta (u-a_1\tau-a_2, \tau)=\frac{\Theta(u-a_1\tau-a_2,\tau)^{N^2}}{\Theta(Nu-Na_1\tau-Na_2,\tau)},
\ee
where $_{N}\!\Theta_{\ba}(u,\tau)^{12}$ is an element of $F_N$ ($N\geq 3$) by Corollary \ref{periodic}.  
For $\ba, \bb \in A(N)$, we have
\be
\frac{\theta_{\ba}}{\theta_\bb} \in F_N^\times,  \quad
g_{\ba} \in \tilde K^M_1(k_N).
\ee


Since $g_{-\ba}=-e^{2 \pi i a_2} g_{\ba}$, we have $\{g_{-\ba}, g_{\bb}\}\equiv \{g_{\ba}, g_{\bb}\}$ in $\tilde K^M_2(k_N)$.
For $\ba \in A(N)$, the divisor
$D_\ba$
provides a discrete valuation on $F_N$ which is trivial on $k_N$. So we use the notation $\partial_\ba$ instead of $\partial_{v_\ba}$ where $v_\ba$ is the associated discrete valuation to $\ba$.

\begin{lemma} \label{bcomputation}
	For $N\geq 3$, let $\ba,\bb,\bc,\bx \in A(N)-(0,0)$ be distinct non-zero elements. We have the following identities in $\tilde K^M_2(k_N)$:
	$$
	\begin{array}{rclcl} \displaystyle
		\frac{1}{(12N^2)^3} {\partial}_\ba \left( \left\{\frac{\theta_{\ba}}{\theta_\bx},\frac{\theta_{\bb}}{\theta_\bx},\frac{\theta_{\bc}}{\theta_\bx}\right\}\right)
		&=& \displaystyle \left\{ \frac{g_{\ba-\bb}}{g_{\ba-\bx}}, \frac{g_{\bc-\ba}}{g_{\bx-\ba}} \right\} &=& \{g_{\ba-\bb},g_{\bc-\ba}\} - \{g_{\ba - \bb}, g_{\bx-\ba}\}-\{g_{\ba-\bx},g_{\bc-\ba}\} ;\\
		\\
		\displaystyle \frac{1}{(12N^2)^3} {\partial}_\bb \left( \left\{\frac{\theta_{\ba}}{\theta_\bx},\frac{\theta_{\bb}}{\theta_\bx},\frac{\theta_{\bc}}{\theta_\bx}\right\}\right)
		&=& \displaystyle \left\{ \frac{g_{\bb-\bc}}{g_{\bb-\bx}}, \frac{g_{\ba-\bb}}{g_{\bx-\bb}} \right\}
		&=&  \{g_{\bb-\bc}, g_{\ba-\bb}\} - \{g_{\bb-\bc}, g_{\bx-\bb}\} - \{g_{\bb-\bx},g_{\ba-\bb}\} ;\\
		\\
		\displaystyle \frac{1}{(12N^2)^3} {\partial}_\bc \left( \left\{\frac{\theta_{\ba}}{\theta_\bx},\frac{\theta_{\bb}}{\theta_\bx},\frac{\theta_{\bc}}{\theta_\bx}\right\}\right)
		&=&\displaystyle\left\{ \frac{g_{\bc-\ba}}{g_{\bc-\bx}}, \frac{g_{\bb-\bc}}{g_{\bx-\bc}} \right\} 
		&=& \{g_{\bc-\ba},g_{\bb-\bc} \} - \{g_{\bc -\ba},g_{\bx-\bc}\} - \{g_{\bc - \bx}, g_{\bb-\bc}\};\\
		\\
		\displaystyle \frac{1}{(12N^2)^3} {\partial}_\bx \left( \left\{\frac{\theta_{\ba}}{\theta_\bx},\frac{\theta_{\bb}}{\theta_\bx},\frac{\theta_{\bc}}{\theta_\bx}\right\}\right) 
		&=&\displaystyle \left\{ \frac{g_{\bx-\bc}} {g_{\bx-\ba}}, \frac{g_{\bx-\bb}}{g_{\bx-\ba}}\right\}
		&=&-\left\{ g_{\bx-\bc},g_{\ba- \bx}\right\} - \{g_{\bx -\ba}, g_{\bb-\bx}\} - \{ g_{\bx -\bb}, g_{\bc - \bx}\} \\
	\end{array}
	$$
	where $\partial_*: \tilde K^M_3(F_N) \to \tilde K^M_2(k(*))$ is the boundary map in Lemma \ref{boundary}.
	Consequently, we have
	$$
	\frac{1}{(12N^2)^3} {\Res}\left( \left\{\frac{\theta_{\ba}}{\theta_\bx},\frac{\theta_{\bb}}{\theta_\bx},\frac{\theta_{\bc}}{\theta_\bx}\right\}\right) = 
	\theta(\ba:\bb:\bc) - \theta(\ba:\bb:\bx) - \theta(\ba:\bx:\bc) - \theta(\bx:\bb:\bc)
	$$
	where $\Res:\tilde K^M_3(F_N) \to \tilde K^M_2(k_N)$ is the residue map in Theorem \ref{SRL} and we put 
	$$
	\theta(\ba:\bb:\bc) := \{g_{\ba-\bb},g_{\bc-\ba}\} +  \{g_{\bc-\ba},g_{\bb-\bc}\} +  \{g_{\bb-\bc},g_{\ba-\bb}\} \in \tilde K^M_2(k_N).
	$$
	
\end{lemma}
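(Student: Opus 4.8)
The plan is to evaluate each of the four boundary maps $\partial_\ba,\partial_\bb,\partial_\bc,\partial_\bx$ on the Steinberg symbol $\{\theta_\ba/\theta_\bx,\theta_\bb/\theta_\bx,\theta_\bc/\theta_\bx\}$ directly from the valuation-theoretic description of Lemma~\ref{boundary}, and then add them up via Theorem~\ref{SRL}. Write $f_\by:=\theta_\by/\theta_\bx$ for $\by\in\{\ba,\bb,\bc\}$. By \eqref{divisors} (equivalently Proposition~\ref{dcomputation}), $f_\by$ has divisor $12N^2(D_\by-D_\bx)$; hence, among the four relevant prime divisors, $f_\by$ is a unit everywhere except at $D_\by$, where $v_\by(f_\by)=12N^2$, and at $D_\bx$, where $v_\bx(f_\by)=-12N^2$. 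Recall that each $D_\by$ is the image of a section of $\pi_N$, so the associated discrete valuation of $F_N$ is trivial on $k_N$ with residue field $k_N$, and the reduction modulo $\mathfrak{m}_{v_\by}$ of a function regular and non-vanishing along $D_\by$ is simply its pullback along the section $s_\by$; for the relevant functions this pullback is computed explicitly by Corollary~\ref{TS}. Throughout we work in $\tilde K^M_\ast=K^M_\ast\otimes\Q$, so that $\{-1,\cdot\}$, $\{\zeta,\cdot\}$ (for $\zeta$ a root of unity), $\{x,x,\cdot\}$ and similar torsion terms vanish.

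For $\partial_\ba$: since $f_\bb,f_\bc$ are units at $D_\ba$, a cyclic (hence sign-free) permutation of the three entries followed by Lemma~\ref{boundary}(1) gives $\partial_\ba(\{f_\ba,f_\bb,f_\bc\})=12N^2\{\overline{f_\bb}|_\ba,\overline{f_\bc}|_\ba\}$, where $\overline{f_\by}|_\ba\in k_N^\times$ denotes the pullback of $f_\by$ along $s_\ba$. Corollary~\ref{TS} identifies $\overline{f_\bb}|_\ba=(g_{\ba-\bb}/g_{\ba-\bx})^{12N^2}$ and $\overline{f_\bc}|_\ba=(g_{\ba-\bc}/g_{\ba-\bx})^{12N^2}$; pulling the exponents out by multilinearity yields $\frac{1}{(12N^2)^3}\partial_\ba=\{g_{\ba-\bb}/g_{\ba-\bx},\,g_{\ba-\bc}/g_{\ba-\bx}\}$. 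Using $g_{-\ba}=-e^{2\pi i a_2}g_\ba$ (so negating an index changes $g$ only by $-1$ times a root of unity, both constants in $k_N=\C(Y(N))$, which therefore vanish as symbol entries in $\tilde K^M_2(k_N)$), this equals $\{g_{\ba-\bb}/g_{\ba-\bx},\,g_{\bc-\ba}/g_{\bx-\ba}\}$; expanding bilinearly and noting $\{g_{\ba-\bx},g_{\bx-\ba}\}=0$ (again $\{x,x\}=\{x,-1\}=0$) gives the first displayed formula. The cases $\partial_\bb$ and $\partial_\bc$ are the identical computation after permuting the roles of $\ba,\bb,\bc$.

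The case $\partial_\bx$ requires one extra manipulation, since all three of $f_\ba,f_\bb,f_\bc$ have nonzero valuation at $D_\bx$. First rewrite, by multilinearity, $\{f_\ba,f_\bb,f_\bc\}=\{f_\ba,\,f_\bb/f_\ba,\,f_\bc/f_\ba\}$ in $\tilde K^M_3(F_N)$: the extra terms each contain a repeated entry $f_\ba$, so are of the form $\{f_\ba,-1,\cdot\}$ (up to sign) and hence are $2$-torsion, so vanish after $\otimes\Q$. Now $f_\bb/f_\ba=\theta_\bb/\theta_\ba$ and $f_\bc/f_\ba=\theta_\bc/\theta_\ba$ are units at $D_\bx$, so a cyclic permutation together with Lemma~\ref{boundary}(1) gives $\partial_\bx=-12N^2\{\overline{\theta_\bb/\theta_\ba}|_\bx,\overline{\theta_\bc/\theta_\ba}|_\bx\}$, and Corollary~\ref{TS} converts $\frac{1}{(12N^2)^3}\partial_\bx$ into $-\{g_{\bx-\bb}/g_{\bx-\ba},g_{\bx-\bc}/g_{\bx-\ba}\}=\{g_{\bx-\bc}/g_{\bx-\ba},g_{\bx-\bb}/g_{\bx-\ba}\}$, which expands to the fourth displayed expression.

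Finally, since each $f_\by$ has divisor supported only on $D_\by$ and $D_\bx$, the symbol $\{f_\ba,f_\bb,f_\bc\}$ has trivial boundary at every discrete valuation of $F_N/k_N$ other than $v_\ba,v_\bb,v_\bc,v_\bx$ (at any such valuation all three entries are units, so Lemma~\ref{boundary}(1) gives $0$), and the residue fields at the four special divisors are $k_N$ with identity norm map; hence by Theorem~\ref{SRL} one gets $\Res(\{f_\ba,f_\bb,f_\bc\})=\partial_\ba+\partial_\bb+\partial_\bc+\partial_\bx$. Substituting the four expansions and collecting terms — the three "cyclic" terms $\{g_{\ba-\bb},g_{\bc-\ba}\}$, $\{g_{\bc-\ba},g_{\bb-\bc}\}$, $\{g_{\bb-\bc},g_{\ba-\bb}\}$ assemble into $\theta(\ba:\bb:\bc)$, and the remaining nine terms group three at a time, up to the harmless root-of-unity constants, into $-\theta(\ba:\bb:\bx)-\theta(\ba:\bx:\bc)-\theta(\bx:\bb:\bc)$ — yields the claimed identity. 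The only place a genuine idea enters is the ratio rewriting needed for $\partial_\bx$; the rest is careful bookkeeping of signs and roots of unity, all absorbed once we pass to $K^M\otimes\Q$, which I expect to be the most error-prone (though not conceptually hard) part.
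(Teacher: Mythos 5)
Your proof is correct and follows essentially the same route as the paper: compute each of the four nontrivial boundaries using the divisor data of Proposition~\ref{dcomputation}, identify the reductions along the torsion sections with Siegel units via Corollary~\ref{TS}, and sum over all prime divisors of $E(N)$ to get $\Res$ as the sum of the four boundaries. The only cosmetic difference is at $\partial_\bx$, where the paper invokes the general formula $\partial_v(\{\pi^{r_1}u_1,\pi^{r_2}u_2,\pi^{r_3}u_3\}) = r_1\{\bar u_2,\bar u_3\} - r_2\{\bar u_1,\bar u_3\} + r_3\{\bar u_1,\bar u_2\}$ directly, while you first replace $\{f_\ba,f_\bb,f_\bc\}$ by $\{f_\ba,f_\bb/f_\ba,f_\bc/f_\ba\}$ modulo $2$-torsion so that the single-unit case of Lemma~\ref{boundary}(1) applies; expanding your answer by bilinearity recovers the paper's three-term formula, so the two computations are equivalent. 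You also make explicit two points the paper treats more tersely --- that cyclic permutations of the triple symbol are sign-free, and that sign changes of the form $g_{-\ba}=-e^{2\pi i a_2}g_\ba$ only move symbols by root-of-unity constants which die in $\tilde K^M_2 = K^M_2\otimes\Q$ --- which the paper's displayed computation for $\partial_\ba$ uses implicitly in passing from $g_{\ba-\bc}/g_{\ba-\bx}$ to $g_{\bc-\ba}/g_{\bx-\ba}$.
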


\begin{proof} 
	This follows from calculations based on the definition of the residue map and the properties of the Steinberg symbol.
	More explicitly, we use the following computation
	\be
	\partial_v (\{\pi_v^{r_1} u_1, \pi_v^{r_2}u_2, \pi_v^{r_3}u_3 \})=
	r_1 \{\overline u_2, \overline u_3\}-r_2\{\overline u_1, \overline u_3 \} + r_3 \{ \overline u_1, \overline u_2\}
	\ee
	where $v$ is a discrete valuation on $F_N$ which is trivial on $k_N$ and $\pi_v$ is a uniformizer with $v$-adic units $u_1, u_2, u_3$ (with notations from Lemma \ref{boundary}).
    Note that discrete valuations on $F_N$ which are trivial on $k_N$ are in bijection with prime divisors of $E(N)$,
	 which is the elliptic curve (in particular, the algebraic normal irreducible variety) over $k_N$ in the introduction:
for an irreducible normal variety $X$ and a prime divisor $D \subset X$, $\cO_{X ,D}$ is the subring of the function field $\C(X)$ defined by
\be
\cO_{X ,D}:= \{\varphi \in \C(X ) \ | \ \varphi \text{ is defined on } U \subset X \text{ open with } U \cap D\neq \phi \}	
\ee	
which is a discrete valuation of $\C(X)$ corresponding to $D$.
	The first four identities follow from Proposition \ref{dcomputation} and Corollary \ref{TS}. For the readers' convenience, we include the proof of the first identity:
	{\small{
	\be
	  {\partial}_\ba \left( \left\{\frac{\theta_{\ba}}{\theta_\bx},\frac{\theta_{\bb}}{\theta_\bx},\frac{\theta_{\bc}}{\theta_\bx}\right\}\right) 
	&=& 12N^2 \left\{ \overline{\frac{\theta_{\bb}}{\theta_{\bx}}},  \overline{\frac{\theta_{\bc}}{\theta_{\bx}}} \right\}   \\
	&=& 12N^2 \left\{ \frac{_{N}\!\Theta(a_1\tau + a_2-b_1\tau-b_2,\tau)^{12}}{_{N}\!\Theta(a_1\tau+a_2-x_1\tau-x_2,\tau)^{12}} ,  \frac{_{N}\!\Theta( a_1\tau + a_2-c_1\tau-c_2,\tau)^{12}}{_{N}\!\Theta(a_1\tau+a_2-x_1\tau-x_2,\tau)^{12}} \right\}     \\
		&=&(12N^2)^3 \displaystyle \left\{ \frac{g_{\ba-\bb}}{g_{\ba-\bx}}, \frac{g_{\bc-\ba}}{g_{\bx-\ba}} \right\}.
	\ee
	}}
	Then we compute $\Res$ using these first four identities:
	\be
	&&\Res\left( \left\{\frac{\theta_{\ba}}{\theta_\bx},\frac{\theta_{\bb}}{\theta_\bx},\frac{\theta_{\bc}}{\theta_\bx}\right\}\right)\\
	&:=&\sum_{v \in \Sigma(F_N/k_N)} \partial_v \left(\left\{\frac{\theta_{\ba}}{\theta_\bx},\frac{\theta_{\bb}}{\theta_\bx},\frac{\theta_{\bc}}{\theta_\bx}\right\} \right) \\
	&=&{\partial}_\ba \left( \left\{\frac{\theta_{\ba}}{\theta_\bx},\frac{\theta_{\bb}}{\theta_\bx},\frac{\theta_{\bc}}{\theta_\bx}\right\}\right)
	+{\partial}_\bb \left( \left\{\frac{\theta_{\ba}}{\theta_\bx},\frac{\theta_{\bb}}{\theta_\bx},\frac{\theta_{\bc}}{\theta_\bx}\right\}\right)
	+{\partial}_\bc \left( \left\{\frac{\theta_{\ba}}{\theta_\bx},\frac{\theta_{\bb}}{\theta_\bx},\frac{\theta_{\bc}}{\theta_\bx}\right\}\right)+
	{\partial}_\bx \left( \left\{\frac{\theta_{\ba}}{\theta_\bx},\frac{\theta_{\bb}}{\theta_\bx},\frac{\theta_{\bc}}{\theta_\bx}\right\}\right) \\
	&=&
	(12N^2)^3 \left(\displaystyle \left\{ \frac{g_{\ba-\bb}}{g_{\ba-\bx}}, \frac{g_{\bc-\ba}}{g_{\bx-\ba}} \right\} +
	\displaystyle \left\{ \frac{g_{\bb-\bc}}{g_{\bb-\bx}}, \frac{g_{\ba-\bb}}{g_{\bx-\bb}} \right\} +
	\displaystyle\left\{ \frac{g_{\bc-\ba}}{g_{\bc-\bx}}, \frac{g_{\bb-\bc}}{g_{\bx-\bc}} \right\} +
	\displaystyle \left\{ \frac{g_{\bx-\bc}} {g_{\bx-\ba}}, \frac{g_{\bx-\bb}}{g_{\bx-\ba}}\right\}  \right) \\
	&=&
	(12N^2)^3 \biggl(\theta(\ba:\bb:\bc) - \theta(\ba:\bb:\bx) - \theta(\ba:\bx:\bc) - \theta(\bx:\bb:\bc)\biggr).
	\ee
\end{proof}

Next we state a lemma closely related to, but with slightly different conditions from, \cite[Lemma 2.11]{Gon} which we will apply in Proposition \ref{klem} to $\theta(\ba:\bb:\bx)$ and other terms.\footnote{The first condition in \cite[Lemma 2.11]{Gon} does not apply to our version of $\theta$ and does not appear to apply to the version of $\theta$ defined in \cite{Gon} either}

Let $A$ and $B$ be abelian groups. Let \begin{align*}
\psi: \wedge^3\Z[A]\to B, \>\>\> {\ba}\wedge{\bb}\wedge{\bc}\mapsto \psi(\ba:\bb:\bc)
\end{align*} 
be a group homomorphism.

\begin{lemma}\label{gonchvariantlemma}
Let $\ba,\bb,$ and $\bc$ be elements of $A$ (and $-\ba, -\bb,-\bc \in A$ are their inverses in $A$).
Assume that the group $A$ is finite and that the map $\psi$ satisfies
\begin{itemize}
	\item $\psi(-\ba:-\bb:-\bc)=\psi(\ba:\bb:\bc)$
	\item $\psi(\ba + \bx, \bb + \bx, \bc + \bx) = \psi(\ba , \bb , \bc )$ for any $\bx\in A$.
\end{itemize}
Then modulo 2-torsion one has \be
\sum_{\bx \in A} \psi(\ba:\bb:\bx)=\sum_{\bx \in A}\psi(\ba:\bx:\bc)=\sum_{\bx \in A}\psi(\bx:\bb:\bc) =0.
\ee
\end{lemma}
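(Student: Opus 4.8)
The plan is to reduce everything to a single sum. By the antisymmetry of $\psi$ and the fact that a cyclic permutation of three letters is even, we have $\sum_{\bx\in A}\psi(\ba:\bx:\bc)=-\sum_{\bx\in A}\psi(\ba:\bc:\bx)$ and $\sum_{\bx\in A}\psi(\bx:\bb:\bc)=\sum_{\bx\in A}\psi(\bb:\bc:\bx)$, and both right-hand sides have exactly the shape of $S:=\sum_{\bx\in A}\psi(\ba:\bb:\bx)$ with the two ``frozen'' arguments renamed. Since the two hypotheses on $\psi$ are universally quantified (they mention no particular elements of $A$), it suffices to prove that $S$ is killed by $2$.

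To show $2S=0$ I would argue as follows. Using the hypothesis $\psi(-\ba:-\bb:-\bc)=\psi(\ba:\bb:\bc)$ and the fact that $\bx\mapsto-\bx$ is a bijection of the finite group $A$,
\[
S=\sum_{\bx\in A}\psi(-\ba:-\bb:-\bx)=\sum_{\bx\in A}\psi(-\ba:-\bb:\bx).
\]
Now apply the translation-invariance hypothesis with translation vector $\ba+\bb\in A$, chosen so that it sends $-\ba\mapsto\bb$ and $-\bb\mapsto\ba$ simultaneously:
\[
\psi(-\ba:-\bb:\bx)=\psi(\bb:\ba:\bx+\ba+\bb).
\]
Re-indexing the finite sum along the bijection $\bx\mapsto\bx+\ba+\bb$ of $A$ gives $S=\sum_{\bx\in A}\psi(\bb:\ba:\bx)$, and antisymmetry in the first two slots yields $S=-\sum_{\bx\in A}\psi(\ba:\bb:\bx)=-S$, whence $2S=0$. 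Combined with the first paragraph, this shows each of the three sums vanishes modulo $2$-torsion.

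There is essentially no obstacle in this argument: the one piece of genuine content is the choice of the translation vector $\ba+\bb$, which carries the pair $(-\ba,-\bb)$ to the \emph{transposed} pair $(\bb,\ba)$ rather than back to $(\ba,\bb)$, so that the antisymmetry of $\psi$ produces the sign that forces $S=-S$. Finiteness of $A$ is used only to guarantee that the sums are finite and that the substitutions $\bx\mapsto-\bx$ and $\bx\mapsto\bx+\ba+\bb$ are legitimate re-indexings; no hypothesis on $B$ is needed.
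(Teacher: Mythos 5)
Your proof is correct and follows essentially the same route as the paper's: the paper also reduces to showing $\sum_{\bx\in A}\psi(\ba:\bb:\bx)=0$ and then applies translation by $\pm(\ba+\bb)$ (which swaps the first two slots up to inversion), the inversion-invariance hypothesis, and the alternation of $\psi$ to derive $S=-S$. The ordering of the three moves differs superficially, but the key idea—the translation vector $\ba+\bb$ producing a transposition and hence a sign—is identical.
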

\begin{proof}
The proof (if not the statement) is identical to the proof of \cite[Lemma 2.11]{Gon}.  Indeed, by the group homomorphism structure of $\psi$, it suffices to prove only 
$\sum_{\bx \in A} \psi(\ba:\bb:\bx)=0$.  But 
\be
\sum_{\bx \in A} \psi(\ba:\bb:\bx)
&=&\sum_{\bx \in A}\psi(\ba-(\ba+\bb):\bb-(\ba+\bb):\bx-(\ba+\bb))\\
&=&\sum_{\by \in A}\psi(-\bb:-\ba:-\by)=-\sum_{\by \in A}\psi(-\ba:-\bb:-\by)=-\sum_{\by \in A}\psi(\ba:\bb:\by).
\ee
\end{proof}
We apply the lemma when $A=A(N):= \frac{1}{N}\Z^2/\Z^2$.
\begin{proposition}\label{klem}
	Let $N\ge 3$ an integer, and let $\ba,\bb,\bc \in A(N)-(0,0)$ be distinct non-zero elements. We have
	\be
	\{g_{\ba-\bb},g_{\bc-\ba}\} +  \{g_{\bc-\ba},g_{\bb-\bc}\} +  \{g_{\bb-\bc},g_{\ba-\bb}\} =0 \quad \text{in} \quad \tilde K^M_2(k_N).
	\ee
\end{proposition}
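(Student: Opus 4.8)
The plan is to combine Lemma \ref{bcomputation} with Suslin's reciprocity law (Theorem \ref{SRL}) and the averaging trick of Lemma \ref{gonchvariantlemma}. First I would fix $N\ge 3$ and distinct non-zero $\ba,\bb,\bc\in A(N)-(0,0)$, and consider the element $\Phi := \left\{\frac{\theta_{\ba}}{\theta_\bx},\frac{\theta_{\bb}}{\theta_\bx},\frac{\theta_{\bc}}{\theta_\bx}\right\}$ of $\tilde K^M_3(F_N)$ (note $\bx$ here is a dummy variable; the honest object is obtained after summing over $\bx$, as in the definition of $\Phi_N$ in the introduction). Since $F_N$ is a field extension of $k_N$ of transcendence degree $1$, Suslin's reciprocity law gives $\Res(\Phi) = 0$ in $\tilde K^M_2(k_N)$. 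By the last displayed identity of Lemma \ref{bcomputation},
\[
0 = \frac{1}{(12N^2)^3}\Res(\Phi) = \theta(\ba:\bb:\bc) - \theta(\ba:\bb:\bx) - \theta(\ba:\bx:\bc) - \theta(\bx:\bb:\bc)
\]
for every choice of $\bx\in A(N)-(0,0)$ distinct from $\ba,\bb,\bc$.

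Next I would sum this identity over all admissible $\bx$. The point is to apply Lemma \ref{gonchvariantlemma} with $A=A(N)$ and $\psi=\theta$: one must check that $\theta$ extends to a group homomorphism $\wedge^3\Z[A(N)]\to \tilde K^M_2(k_N)$, that it is translation-invariant ($\theta(\ba+\bx:\bb+\bx:\bc+\bx)=\theta(\ba:\bb:\bc)$, which is immediate from the definition since the arguments of the $g$'s are differences), and that $\theta(-\ba:-\bb:-\bc)=\theta(\ba:\bb:\bc)$ modulo $2$-torsion, which follows from $g_{-\bv} = -e^{2\pi i v_2} g_{\bv}$ and the fact that $\{-1,y\}$ is $2$-torsion in $K^M_2$. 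Summing over $\bx$ ranging over all of $A(N)$ (the finitely many excluded degenerate values can be absorbed: either by noting $\theta$ vanishes or is controlled there, or by observing that the difference between summing over $A(N)$ and over the admissible subset is a bounded number of terms that one handles directly), the terms $\sum_\bx \theta(\ba:\bb:\bx)$, $\sum_\bx \theta(\ba:\bx:\bc)$, $\sum_\bx \theta(\bx:\bb:\bc)$ all vanish modulo $2$-torsion by Lemma \ref{gonchvariantlemma}, leaving $|A(N)|\cdot \theta(\ba:\bb:\bc)=0$ modulo $2$-torsion. Since we are working in $\tilde K^M_2(k_N) = K^M_2(k_N)\otimes\Q$, which is a $\Q$-vector space, both the nonzero integer factor $|A(N)| = N^2$ and all $2$-torsion ambiguity disappear, giving $\theta(\ba:\bb:\bc)=0$, which is exactly the claimed identity.

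I expect the main obstacle to be the careful bookkeeping around the degenerate values of $\bx$ (namely $\bx\in\{\mathbf 0,\ba,\bb,\bc\}$), where either the Steinberg symbols $\theta_{\bx}$ are not defined or the formula in Lemma \ref{bcomputation} does not directly apply, together with making precise the sense in which $\theta$ is a well-defined homomorphism on $\wedge^3\Z[A(N)]$ (in particular handling the origin and the $2$-torsion coming from the sign $-e^{2\pi i v_2}$ in $g_{-\bv}$). Everything else is a formal consequence of Suslin reciprocity plus the averaging lemma; the tensoring with $\Q$ is what makes the argument clean, since it kills both the integer multiplicities and the $2$-torsion that would otherwise obstruct the conclusion. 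Finally, the passage from $N\ge 3$ to all $N\ge 1$ in the statement \eqref{Maninrelation} is handled separately (e.g. by a norm/compatibility argument with a larger level), but is not needed for Proposition \ref{klem} itself, which is stated only for $N\ge 3$.
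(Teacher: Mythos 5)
Your argument is essentially the paper's own proof: apply Lemma \ref{bcomputation} for each fixed $\bx$, sum over $\bx\in A(N)$, kill the three $\bx$-dependent sums via Lemma \ref{gonchvariantlemma}, and conclude by Suslin reciprocity together with the fact that $\tilde K^M_2(k_N)$ is a $\Q$-vector space (so both the factor $N^2$ and all torsion ambiguity disappear). You also correctly flag, as genuine points requiring care, the verification that $\theta$ satisfies the hypotheses of Lemma \ref{gonchvariantlemma} (using $g_{-\bv}=-e^{2\pi i v_2}g_{\bv}$ and the triviality of $\{\zeta, y\}$ in $\tilde K^M_2$ for $\zeta$ a root of unity) and the handling of the degenerate values $\bx\in\{\mathbf 0,\ba,\bb,\bc\}$, both of which the paper's terse proof leaves implicit.
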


\begin{proof}  
	The proof follows from Lemma \ref{bcomputation}, and the following observation:
	Summing the terms in Lemma \ref{bcomputation} over $\bx \in A(N)$ and applying Lemma \ref{gonchvariantlemma} 
	to $\theta(\ba:\bb:\bx), \theta(\ba:\bx:\bc),$ and $\theta(\bx:\bb:\bc)$, we obtain
	$$
	\frac{1}{N^2(12N^2)^3} \sum_{\bx \in A(N)} {\Res}\left( \left\{\frac{\theta_{\ba}}{\theta_\bx},\frac{\theta_{\bb}}{\theta_\bx},\frac{\theta_{\bc}}{\theta_\bx}\right\}\right)=\{g_{\ba-\bb},g_{\bc-\ba}\} +  \{g_{\bc-\ba},g_{\bb-\bc}\} +  \{g_{\bb-\bc},g_{\ba-\bb}\}.
	$$
	Then, by Suslin's reciprocity law it follows that $\{g_{\ba-\bb},g_{\bc-\ba}\} +  \{g_{\bc-\ba},g_{\bb-\bc}\} +  \{g_{\bb-\bc},g_{\ba-\bb}\}= 0$.

\end{proof}

\begin{theorem}\label{sres}
	Let $\ba, \bb, \bc \in \Q^2/\Z^2-(0,0)$ and suppose $\ba + \bb + \bc = 0$.   Then
	$$
	\{g_\ba,g_\bb\} + \{g_\bb,g_\bc\} + \{g_\bc,g_\ba\} = 0\ \ \hbox{\rm in $\tilde K^M_2(k_\infty) $}.
	$$
\end{theorem}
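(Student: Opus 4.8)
The plan is to derive Theorem \ref{sres} from Proposition \ref{klem} by a linear change of variables, after reducing to a single finite level. Given $\ba,\bb,\bc\in\Q^2/\Z^2-(0,0)$, each has finite order, so there is an integer $N$ with $\ba,\bb,\bc\in A(N)=\frac1N\Z^2/\Z^2$; replacing $N$ by a multiple if necessary we may assume $N\ge 3$. The inclusion $k_N\hookrightarrow k_\infty$ induces a homomorphism $\tilde K^M_2(k_N)\to\tilde K^M_2(k_\infty)$ sending $g_\ba\mapsto g_\ba$ (and likewise for $\bb,\bc$), so it suffices to prove the asserted identity already in $\tilde K^M_2(k_N)$.

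Next I would exhibit $(\ba,\bb,\bc)$ as the triple of differences of a triple satisfying the hypotheses of Proposition \ref{klem}. For $\bx\in A(N)$ put
$$\ba':=\bx-\bb,\qquad \bb':=\bx+\bc,\qquad \bc':=\bx .$$
Using $\ba+\bb+\bc=0$ one gets
$$\ba'-\bb'=-\bb-\bc=\ba,\qquad \bc'-\ba'=\bb,\qquad \bb'-\bc'=\bc .$$
The elements $\ba',\bb',\bc'$ are pairwise distinct exactly when $\ba\ne 0$, $\bb\ne 0$ and $\bc\ne 0$, which holds by hypothesis, and they are all nonzero as soon as $\bx\notin\{0,\bb,-\bc\}$. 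Since $|A(N)|=N^2\ge 9$ while the excluded set has at most three elements, such an $\bx$ exists. For that choice Proposition \ref{klem} applied to $(\ba',\bb',\bc')$ reads
$$\{g_{\ba'-\bb'},g_{\bc'-\ba'}\}+\{g_{\bc'-\ba'},g_{\bb'-\bc'}\}+\{g_{\bb'-\bc'},g_{\ba'-\bb'}\}=0\quad\text{in }\tilde K^M_2(k_N),$$
and substituting the three differences computed above yields $\{g_\ba,g_\bb\}+\{g_\bb,g_\bc\}+\{g_\bc,g_\ba\}=0$ in $\tilde K^M_2(k_N)$, hence in $\tilde K^M_2(k_\infty)$.

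Essentially all the mathematical content is already contained in Proposition \ref{klem} (which rests on Lemma \ref{bcomputation}, Lemma \ref{gonchvariantlemma} and Suslin's reciprocity law, Theorem \ref{SRL}); here only the elementary linear algebra of the substitution requires attention. The one point to watch is that the nonvanishing and distinctness constraints of Proposition \ref{klem} concern $\ba',\bb',\bc'$, not the original data $\ba,\bb,\bc$ or their differences, so they are arranged by exploiting the freedom in the auxiliary parameter $\bx$ (and, if needed, by enlarging $N$) rather than by imposing extra restrictions on $\ba,\bb,\bc$ themselves. Beyond this bookkeeping and the routine compatibility of $g_\ba$ with change of level, no genuine obstacle remains.
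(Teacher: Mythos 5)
Your proof is correct and follows essentially the same route the paper has in mind: Theorem~\ref{sres} is a direct consequence of Proposition~\ref{klem}, and the two differ only by the elementary change of variables you carry out. The paper does not actually write this step out; your substitution $\ba':=\bx-\bb$, $\bb':=\bx+\bc$, $\bc':=\bx$ is a correct and clean way to do it, and your check that distinctness of $\ba',\bb',\bc'$ is equivalent to $\ba,\bb,\bc$ all being nonzero, while the nonvanishing of $\ba',\bb',\bc'$ costs only three choices of $\bx$ out of $N^2\ge 9$, is exactly the right bookkeeping. One small presentational difference from the paper: where you "replace $N$ by a multiple if necessary so that $N\ge 3$," the paper instead keeps the level fixed and disposes of the $N=1,2$ cases in Remark~\ref{srescomment} by an explicit comparison of $q$-expansions (e.g.\ $g_{(0,1/2)} = g_{(0,2/4)}$). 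Your approach absorbs that remark into the general argument; the implicit ingredient, which you correctly invoke, is that the Siegel unit $g_\ba$ of Definition~\ref{Siegelunits} depends only on $\ba\in\Q^2/\Z^2$ and is compatible with the inclusions $k_N\hookrightarrow k_{N'}\hookrightarrow k_\infty$, so passing to a multiple of $N$ is harmless. No gaps.
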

	\begin{remark}\label{srescomment}
	
	Note that Theorem \ref{sres} is still true for $\ba, \bb, \bc$  
	associated to $N=1,2$: the $N=1$ case holds trivially (as $g_{(0,0)}=1$) and the $N=2$ case has only one Manin relation, which follows directly from the $N=4$ case by observing that
	$g_{(0,1/2)}, g_{(1/2,0)}, g_{(1/2,1/2)}$
	have the same $q$-expansions as $g_{(0,2/4)}, g_{(2/4,0)}, g_{(2/4,2/4)}$ respectively.  
\end{remark}

\begin{remark} 
We remark that Theorem \ref{sres} is equivalent to 
\cite[Theorem 4.1]{Bru2} modulo torsion, which strengthens\footnote{It removes the assumption that $3$ does not divide $N$ in \cite{Bru}.} \cite[(7) Theorem 1.4, (5) Definition 1.2]{Bru}, the main result of \cite{Bru}.
Indeed:
\begin{itemize}
	\item Let $Y(N)^{(1)}$ denote the set of points of $Y(N)$ of codimension $1$. It follows e.g. from \cite[Section 2.1, especially (2.5)]{SV} that the kernel of the tame symbol map
	$K^M_2(k_N) \xrightarrow{\partial} \bigoplus_{D\in Y(N)^{(1)}} K_1^M(\C(D))$ is isomorphic to the motivic cohomology group $H^2(Y(N), 2)$.
	\item We identify motivic cohomology groups with the higher Chow groups 
	\be
	\mathrm{CH}^p(Y(N), n) = H^{2p-n} (Y(N), p)
        \ee
         as per standard practice in the literature, and we observe (e.g. \cite[p. 177]{Totaro} or \cite{Bloch}) that the generalized Grothendieck Riemann Roch theorem 
	\be
	K_n(Y(N)) \otimes \Q \simeq \bigoplus_p \mathrm{CH}^p(Y(N), n) \otimes \Q
	\ee
	induces an embedding 
		$H^2(Y(N),2)\otimes \Q \hookrightarrow K_2(Y(N))\otimes \Q$.
\end{itemize}
Since the $g_\ba$ are units (and hence have trivial divisor) for $\ba$ as above,  the tame symbol of $\{g_\ba, g_\bb\}$ is zero by definition, and so $\{g_\ba, g_\bb\}$ belongs to the Quillen $K_2$ of the modular curve of the appropriate level  (modulo tensoring with $\Q$).
In this paper we use the Quillen $K$-groups of the irreducible modular curve $Y(N)$ over $\C$, whereas  Brunault \cite{Bru} uses that of the irreducible modular curve $\cM(N)$ over $\Q$ (see \eqref{cm} and \cite[p. 2]{Bru}). There is a natural embedding from the function field $\Q(\cM(N))$ to $k_N=\C(Y(N))$, which induces a map $K_2^M(\Q(\cM(N))) \to K_2^M(k_N)$. According to \cite[Lemma 6.1.3, Chap 3]{Weibel}, for every field extension $F \subset E$, the kernel of $K_2^M(F) \to K_2^M(E)$
is a torsion subgroup; this implies that $K_2^M(\Q(\cM(N)))\otimes \Q \to K_2^M(k_N)\otimes \Q$ is injective. Through the composite map $$H^2(\cM(N),2)\otimes \Q \hookrightarrow K_2^M(\Q(\cM(N)))\otimes \Q \hookrightarrow K_2^M(k_N)\otimes \Q,$$ it follows that our Theorem \ref{sres} is equivalent to \cite[Theorem 4.1]{Bru2} modulo torsion.\footnote{Brunault's result is stated in $K_2^M(\Q(\cM(N)))\otimes \Z[\frac{1}{6N}]$.}

\end{remark}


\subsection{The construction and the proof}



We provide a proof for Theorem \ref{modularsymbol}.
We start from the following important result on the Beilinson-Kato distribution ${{\bm\mu}^2}$.
\begin{proposition}\label{wdefined}
The Beilinson-Kato distribution ${{\bm\mu}^2} \in \Dist(M_{2\times2}(\Q),\tilde K^M_2(k_\infty))$ has the following properties.
\begin{itemize}
\item[(a)] (Invariance under $T(\Q)$)  For any $t$ in the standard torus $T(\Q)$ of diagonal matrices of $\GL_2(\Q)$, we have $t^*({{\bm\mu}^2}) = {{\bm\mu}^2}$;

\item[(b)] (The Manin Relations)   For $S = \begin{pmatrix}0&-1\\ 1&0\end{pmatrix}$ and $R = \begin{pmatrix}0&-1\\ 1&-1\end{pmatrix}$ we have
$$
(1 + (S^{-1})^*){{\bm\mu}^2} = (1+(R^{-1})^* +(R^{-2})^*){{\bm\mu}^2} = 0.
$$
\end{itemize}

\end{proposition}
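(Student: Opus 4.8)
The plan is to unwind both statements into concrete identities about Steinberg symbols of Siegel units and then invoke the $\Q^\times$-invariance of the Siegel distribution $\bm\mu$ (Proposition~\ref{Sieg}) and the three-term relation already established in Theorem~\ref{sres}. For part~(a), I would first observe that it suffices to check invariance on test functions of the product form $\phi_1\otimes\phi_2$, since these span $\cS(M_{2\times2}(\Q))$, and then further reduce to characteristic functions $[\ba+\Z^2]\otimes[\bb+\Z^2]$ by the distribution (additivity) property. For $t=\operatorname{diag}(t_1,t_2)\in T(\Q)$, the pullback $(t^{-1})^*$ acts on $M_{2\times2}(\Q)$ by scaling the two rows by $t_1$ and $t_2$ respectively, so $t^*(\bm\mu^2)(\phi_1\otimes\phi_2)=\{\bm\mu(t_1^*\phi_1),\bm\mu(t_2^*\phi_2)\}$; here $t_i^*$ denotes the diagonal action of $t_i\in\Q^\times$ on $\cS(\Q^2)$. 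Now $\Q^\times$-invariance of $\bm\mu$ gives $\bm\mu(t_i^*\phi_i)=\bm\mu(\phi_i)$, hence $t^*(\bm\mu^2)=\bm\mu^2$. (This is precisely the point flagged in the footnote to Proposition~\ref{Sieg}.)

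For part~(b) I would again reduce to characteristic functions $\phi=[\ba+\Z^2]\otimes[\bb+\Z^2]$ by additivity, with $\ba,\bb$ ranging over $\Q^2/\Z^2$, so that $\bm\mu^2(\phi)=\{g_\ba,g_\bb\}$. The action of $(S^{-1})^*$ permutes the two rows of a $2\times2$ matrix (up to signs absorbed harmlessly into $\Q^\times$-invariance of $\bm\mu$ on the coefficient side, which acts trivially on $k_\infty$), so $(S^{-1})^*(\bm\mu^2)(\phi)=\{g_\bb,g_\ba\}$ up to such a change of sign of arguments; by antisymmetry of the Steinberg symbol $\{g_\ba,g_\bb\}+\{g_\bb,g_\ba\}=0$, giving the first relation. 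For the second relation, $R$ has order $3$ in $\PGL_2$, and $R^{-1},R^{-2}$ act on the rows of the matrix by cyclically permuting $\ba,\bb$ through $\ba,\ \bb,\ -\ba-\bb$ (again up to $\Z^2$-translations and $\Q^\times$-scalings that $\bm\mu$ does not see); thus $(1+(R^{-1})^*+(R^{-2})^*)(\bm\mu^2)$ evaluated on $\phi$ becomes, for appropriate $\ba',\bb',\bc'$ with $\ba'+\bb'+\bc'=0$,
\[
\{g_{\ba'},g_{\bb'}\}+\{g_{\bb'},g_{\bc'}\}+\{g_{\bc'},g_{\ba'}\},
\]
which vanishes by Theorem~\ref{sres}.

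The main obstacle I expect is the \emph{bookkeeping of the $\GL_2(\Q)$-action}: one must carefully track how $(S^{-1})^*$ and $(R^{-1})^*$ act on a characteristic function $[\ba+\Z^2]$ — namely that $(\gamma^{-1})^*[\ba+\Z^2]$ is again a sum of characteristic functions of $\Z^2$-cosets, indexed by the appropriate transforms of $\ba$ — and verify that the resulting arguments of the Siegel units differ from the desired ones only by a lattice translation (which does not change $g$) and by the diagonal $\Q^\times$-action (which $\bm\mu$ kills) and by signs like $g_{-\ba}=-e^{2\pi i a_2}g_\ba$ (which are trivial in $\tilde K^M_1(k_\infty)$ after the relevant Steinberg symbol is formed, or rather contribute nothing because they lie in the torsion-free tensored group and the sign/root of unity factor is torsion). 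Once this reduction is done cleanly, both relations are immediate consequences of antisymmetry and Theorem~\ref{sres}; so the work is entirely in setting up the matrix-level computation so that the hypotheses of those two results apply verbatim.
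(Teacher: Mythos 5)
Your proposal is correct and takes essentially the same approach as the paper, which simply asserts the three points — $T(\Q)$-invariance from $\Q^\times$-invariance of $\bm\mu$, the first Manin relation from skew-symmetry of the Steinberg symbol, and the second from Theorem~\ref{sres} — without elaboration; you have merely filled in the matrix-level bookkeeping that the authors leave implicit. The only small imprecision is the claim that the $\Z^2$-coset characteristic functions $[\ba+\Z^2]\otimes[\bb+\Z^2]$ span $\cS(M_{2\times 2}(\Q))$ by additivity alone; strictly one also needs the $\Q^\times$-scaling (already available from part~(a) applied to scalar matrices) to reduce general $[\ba + N\Z^2]$-type test functions to that case, a point your parenthetical "up to $\Q^\times$-scalings" already gestures at.
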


\begin{proof}  Invariance under $T(\Q)$ is an immediate consequence of the fact that the Siegel distribution is invariant under the action of $\Q^\times$.   The first Manin relation follows from the skew symmetry of the Steinberg symbol.  The second is equivalent to Theorem  \ref{sres}. 
\end{proof}

For any pair of distinct rational cusps $r,s\in \PP^1(\Q)$ of the upper half-plane we choose $\alpha\in \GL_2(\Q)$ so that $\alpha(\infty) = r$ and $\alpha(0) = s$ and define ${\bm\xi}(r\to s) \in \Dist(M_{2\times2}(\Q), \tilde K^M_2(k_\infty))$ by 
\bea \label{BKD}
{\bm\xi}(r\to s)={\bm\xi}( \alpha(\infty)\to \alpha(0)) := (\alpha^{-1})^*{{\bm\mu}^2} =(\alpha^{-1})^* \left( {\bm\xi}(\infty \to 0)\right)
\eea
where ${{\bm\mu}^2}$ is the Beilinson-Kato distribution. We note first of all that this definition does not depend on the choice of $\alpha$.  Indeed, if $\beta \in \GL_2(\Q)$ is any other element satisfying $\beta(\infty) = r$ and $\beta(0) = s$
then $\beta =  \alpha t$ for some $t\in T(\Q)$.  So by \textit{(a)} of Proposition \ref{wdefined} we have $(\beta^{-1})^* {{\bm\mu}^2} = (t^{-1} \alpha^{-1} )^*{{\bm\mu}^2}= (\alpha^{-1})^* ({t^{-1}})^*{{\bm\mu}^2} = (\alpha^{-1})^* {{\bm\mu}^2}$.

We have to prove that there is a unique $\GL_2(\Q)$-invariant modular symbol
$$
{\bm\xi} : \Delta_0 \lra \Dist(M_{2\times2}(\Q),\tilde K^M_2(k_\infty))
$$
with the property that 
$${\bm\xi}\biggl((0)-(\infty)\biggr) = {{\bm\mu}^2}. $$

\begin{proof} [The proof of Theorem \ref{modularsymbol}] The uniqueness is clear and the $\GL_2(\Q)$-invariance of ${\bm\xi}$ is an immediate consequence of the fact (due to \eqref{BKD}) that for any $\gamma\in \GL_2(\Q)$ and any pair of distinct cusps $r,s \in  \PP^1(\Q)$ we have 
$$
{\bm\xi}(\gamma r\to \gamma s) = (\gamma^{-1})^*\left( {\bm\xi}(r\to s) \right). 
$$
Consider the following exact sequence of $\Z$-modules
\be
0\to M\to \Z[\GL_2(\Q)] &\to& \triangle_0 \to 0\\
\gamma &\mapsto& \gamma\cdot (\infty \to 0),
\ee
where $M$ is the module of Manin relations; $\triangle_0$ is generated by $(\infty\to 0)$ as a $\Z[\GL_2(\Q)]$-module and 
the annihilator of $(\infty \to 0)$ is the left ideal generated by the set
$$
\left\{ \d -1 : \d\in T(\Q) \right\} \cup \left\{ 1+S, 1+ R +R^2 \right\}.
$$
%
So we only need to show that for any triple $r,s,t\in \PP^1(\Q)$ of distinct cusps we have 
$$
{\bm\xi}(r\to s) + {\bm\xi}(s\to r) = {\bm\xi}(r\to s) + {\bm\xi}(s\to t) + {\bm\xi}(t\to r) = 0.
$$
But there is a unique $\alpha \in \GL_2(\Q)$ that sends the ordered triple $(\infty, 0, 1)$ to $(r,s,t)$ and with that choice of $\alpha$ we have, by the Manin relations (\textit{(b)} of Proposition \ref{wdefined}),
$$
{\bm\xi}(r\to s) + {\bm\xi}(s\to r) = (\alpha^{-1})^*\bigl({\bm\xi}(\infty \to 0) + {\bm\xi}(0 \to \infty)\bigr) = (\alpha^{-1})^*(1+(S^{-1})^*){{\bm\mu}^2} = 0,
$$
and also
\be
{\bm\xi}(r\to s) + {\bm\xi}(s\to t) + {\bm\xi}(t\to r) 
&=& (\alpha^{-1})^*\bigl({\bm\xi}(\infty \to 0) + {\bm\xi}(0 \to 1) + {\bm\xi}(1 \to \infty)\bigr)\\
&=& (\alpha^{-1})^*(1+(R^{-1})^*+(R^{-2})^*){{\bm\mu}^2} = 0.
\ee
The theorem follows at once.
\end{proof}

\section{A generalization to $\GL_n(\Q)$-invariant modular symbols}\label{glngeneral}

\subsection{The statement for $\GL_n(\Q)$-invariant modular symbols}


Let $\operatorname{Vect}$ be the category whose objects are finite dimensional vector spaces over $\Q$ and whose morphisms are $\Q$-linear isomorphisms.
Let $\operatorname{Ab}$ be the category of abelian groups.
Given any linear isomorphism $\varphi:V_1 \xrightarrow{\simeq} V_2$ of two vector spaces $V_1, V_2$ over $\Q$, we define
\bea \label{test}
\varphi^*: \cS(V_2) \xrightarrow{\simeq} \cS(V_1), \quad \varphi^* (f) (x) := f(\varphi(x)).
\eea
This implies that $\cS: \operatorname{Vect} \to \operatorname{Ab}$ is a contravariant functor.\footnote{Note that if one uses $\Q$-linear maps instead of $\Q$-linear isomorphisms in the category $\operatorname{Vect}$, then $\cS$ does not form a functor.} This functor has a nice additive property: if $V\simeq V_1\oplus V_2$, then $\cS(V)\simeq \cS(V_1) \otimes_\Z \cS(V_2)$.
If $M$ is a covariant functor from $\operatorname{Vect}$ to $\operatorname{Ab}$, then any linear isomorphism $\varphi:V_1\xrightarrow{\simeq} V_2$ induces an isomorphism of abelian groups:
\bea \label{indcov}
\varphi_*: \operatorname{\Dist}(V_1, M) \to \operatorname{\Dist}(V_2, M), \quad \varphi_*(\mu) (f_2)
:=\left( \mu(\varphi^*(f_2)) \right),
\eea
where $f_2 \in \cS(V_2)$, $\mu\in \operatorname{\Dist}(V_1, M)$.
Therefore, the assignments $V \mapsto \operatorname{\Dist}(V, M)$ and $\varphi \mapsto \varphi_*$ give us a covariant functor.

Let $V$ be an $n$-dimensional vector space and $V^*=\Hom(V,\Q)$ the dual vector space. Denote $\Aut_\Q(V)$ by $G$. Let $\cB$ be the set of ordered bases of $V^*$.
Now we give a definition of $G$-invariant modular symbols following \cite{AR}.
Let $G$ act on $V$ on the left. We define a right action of $G$ on $V^*$ as follows:
\be
(\g^* \l)(u ) = \l (\g u), \quad \g \in G, \l \in V^*, u \in V,
\ee
and $G$ acts on $(V^*)^n$ diagonally.
Recall the distribution ${\bm\mu}^n$ in Definition \ref{glnSD}.
\begin{definition}\label{glnm}
For any $\ulambda = (\lambda_1,\lambda_2,\ldots,\lambda_n) \in (V^\ast)^n$, we define a distribution (using \eqref{indcov})
\bea \label{keydef}
\xi_\ulambda \in \Dist(V \times  V, \tilde K^M_n(k_\infty)), \quad 
\xi_\ulambda := \begin{cases}
\phi(\ulambda)^{-1}_\ast ({\bm\mu}^n)  & \hbox{ \rm if\  }\ulambda\in \cB\\
\\
			 0 &\hbox{\rm otherwise,}
\end{cases}
\eea
where $\phi(\ulambda): V\times V\xrightarrow{\simeq} M_{n\times 2}(\Q)$ is the $\Q$-linear isomorphism induced from $\ulambda$:
\be
 V\times V \ni (v_1, v_2) \mapsto 
\begin{pmatrix}
\l_1(v_1) & \l_1(v_2) \\
\l_2(v_1) & \l_2(v_2) \\
\vdots & \vdots \\
\l_n(v_1) & \l_n(v_2) 
\end{pmatrix} \in M_{n\times 2}(\Q).
\ee
In particular, we have 
$$
\xi_\ulambda ([a+ rL_{\ulambda}]\otimes [b+rL_{\ulambda}]) := \begin{cases}
\{g_{\lambda_1(a)/r, \lambda_1(b)/r}, \ldots,g_{\lambda_n(a)/r, \lambda_n(b)/r} \}  & \hbox{ \rm if\  }\ulambda\in \cB\\
\\
			 0 &\hbox{\rm otherwise,}
\end{cases}
$$
where $a, b \in V$, $r \in \N$, and $L_\ulambda =\{ v \in V : \ulambda (v)=(\lambda_1(v), \cdots, \lambda_n(v)) \in \Z^n\}$. 
\end{definition}

\begin{definition}\cite{AR}\label{AR}
For any (contravariant) $G$-module $M$, the map 
\be
\xi:  (V^*)^n \to M
\ee
 is called a $G$-invariant modular symbol with values in $M$, if $\xi$ satisfies the following properties:
\begin{itemize}
\item $\xi$ is $G$-equivariant, i.e. for all $\gamma\in G$ and $\ulambda\in \cB$ we have 
\bea \label{one}
\xi({\gamma^\ast \ulambda}) = \gamma^\ast \left( \xi(\ulambda)\right).
\eea
\item For all $\ulambda \in \left(V^*\right)^n$ and $(t_1,\ldots, t_n)\in \left(\Q^\times\right)^n$ we have
\bea \label{two}
\xi(t_1\lambda_1\ldots, t_n\lambda_n) = \xi(\lambda_1,\ldots,\lambda_n).
\eea
\item For any permutation $\sigma \in S_n$ and any $\ulambda = (\lambda_1,\ldots,\lambda_n) \in \left(V^*\right)^n$ we have
\bea \label{three}
\xi(\lambda_{\sigma(1)},\lambda_{\sigma(2)},\ldots, \lambda_{\sigma(n)}) = \sign(\sigma)\cdot 
\xi(\lambda_1, \lambda_2,\ldots,\lambda_n).
\eea
\item If $\ulambda\notin \cB$ (i.e. the $\lambda_1,\ldots,\lambda_n$ are linearly dependent in $V^*$), 
then 
\bea \label{five}
\xi(\ulambda)=0.
\eea
\item  For any $(n+1)$-tuple $(\lambda_0,\lambda_1,\ldots,\lambda_n) \in \left(V^*\right)^{n+1}$ we have
\bea \label{four}
\sum_{i=0}^n (-1)^i\xi(\lambda_0,\ldots,\hat \lambda_i,\ldots,\lambda_n) = 0.
\eea
\end{itemize}
\end{definition}

Recall that $G=\Aut_\Q(V)$. We equip $\tilde K^M_n(k_\infty)$ with a trivial contravariant $ G$-module structure, i.e. $G$ acts on $\Dist(V \times  V, \tilde K^M_n(k_\infty))$ as follows (see \eqref{distributionaction}):
\bea \label{taction}
\int \gamma^\ast f \cdot d\gamma^\ast\mu = \int f(x)d\mu(x), \quad (\g^\ast f)(v_1, v_2) = f(\g v_1, \g v_2), \quad x, (v_1,v_2) \in V \times V,
\eea
for any $\mu\in \Dist(V\times V,\tilde K^M_n(k_\infty))$, $f\in \cS(V\times V)$ and $ \gamma\in  G$.

\begin{theorem} \label{mainthm}
Let $\xi_V  : (V^\ast)^n  \lra  \Dist(V \times  V, \tilde K^M_n(k_\infty))$ be defined by
$$
\begin{array}{ccccc}
\xi_V  &: &(V^\ast)^n  &\lra&  \Dist(V \times  V, \tilde K^M_n(k_\infty))\\
&&\ulambda &\longmapsto& \xi_V(\ulambda)=\xi_\ulambda.\\
\end{array} 
$$
Then $\xi$ is a $G$-invariant modular symbol with values in $\Dist(V \times  V, \tilde K^M_n(k_\infty))$.
\end{theorem}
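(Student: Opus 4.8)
The plan is to verify, for the map $\xi_V\colon\ulambda\mapsto\xi_\ulambda$ of \eqref{keydef}, each of the five axioms of Definition \ref{AR}. Axiom \eqref{five} (vanishing on linearly dependent tuples) is built into the definition, so there is nothing to check. For the homogeneity axiom \eqref{two}, observe that $\phi(t_1\lambda_1,\dots,t_n\lambda_n)=\operatorname{diag}(t_1,\dots,t_n)\circ\phi(\lambda_1,\dots,\lambda_n)$ as linear isomorphisms $V\times V\to M_{n\times 2}(\Q)$; by functoriality of the pushforward \eqref{indcov} this reduces \eqref{two} to the invariance of $\bm\mu^n$ under the diagonal torus of $\GL_n(\Q)$. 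Since a diagonal matrix acts on $\cS(M_{n\times 2}(\Q))\simeq\cS(\Q^2)^{\otimes n}$ as the tensor product of the scalar pullbacks $t_i^\ast$, the definition of $\bm\mu^n$ (Definition \ref{glnSD}) and the $\Q^\times$-invariance of the Siegel distribution (Proposition \ref{Sieg}) give
$$\bm\mu^n\bigl((t_1^\ast\phi_1)\otimes\cdots\otimes(t_n^\ast\phi_n)\bigr)=\{\bm\mu(t_1^\ast\phi_1),\dots,\bm\mu(t_n^\ast\phi_n)\}=\{\bm\mu(\phi_1),\dots,\bm\mu(\phi_n)\}=\bm\mu^n(\phi_1\otimes\cdots\otimes\phi_n),$$
as needed.

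Axiom \eqref{three} is analogous: permuting $\ulambda$ by $\sigma\in S_n$ amounts to precomposing $\phi(\ulambda)$ with the row-permutation matrix $P_\sigma$, and permuting the tensor factors of $\cS(\Q^2)^{\otimes n}$ permutes the arguments of the Steinberg symbol in Definition \ref{glnSD}; since the Milnor symbol of degree-one classes is alternating (\cite[Lemma 1.1]{Sus}) and $G$ acts trivially on the coefficients (see \eqref{taction}), one gets $\xi_{\sigma\cdot\ulambda}=\sign(\sigma)\,\xi_\ulambda$. Axiom \eqref{one} is a direct unwinding of \eqref{indcov} and \eqref{taction}: $\phi(\gamma^\ast\ulambda)=\phi(\ulambda)\circ(\gamma\times\gamma)$ where $\gamma\times\gamma$ denotes the diagonal action of $\gamma$ on $V\times V$, so functoriality of the pushforward together with the (contravariant, trivial-on-coefficients) $G$-action on distributions yields $\xi_{\gamma^\ast\ulambda}=\gamma^\ast\bigl(\xi_\ulambda\bigr)$; for $\ulambda\notin\cB$ both sides vanish.

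The remaining axiom \eqref{four}, the coboundary relation, is the crux, and I would prove it by induction on $n$. For $n=1$ it is immediate from \eqref{two} (two non-zero linear forms on a line are proportional). For $n=2$, after relabelling by \eqref{three} one may assume $(\lambda_1,\lambda_2)$ is a basis, and by \eqref{one} and \eqref{two} normalise it to the dual basis $(e_1^\ast,e_2^\ast)$; under the identification of $\xi_2$ with the Beilinson--Kato modular symbol $\bm\xi$ (Proposition \ref{cgcom}) the three terms of \eqref{four} become Manin symbols $\bm\xi(r\to s),\ \bm\xi(s\to t),\ \bm\xi(t\to r)$ for a triple of cusps determined by $\lambda_0,\lambda_1,\lambda_2$, so \eqref{four} becomes exactly the $3$-term Manin relation of Proposition \ref{wdefined}(b) (equivalently Theorem \ref{sres}), degenerating to the $2$-term relation when $\lambda_0$ is proportional to $e_1^\ast$ or $e_2^\ast$. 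For the inductive step ($n\ge 3$), given $(\lambda_0,\dots,\lambda_n)\in(V^\ast)^{n+1}$: if the $\lambda_i$ span fewer than $n$ dimensions, every $n$-subtuple is dependent and all terms vanish by \eqref{five}; otherwise, relabelling by \eqref{three}, assume $(\lambda_1,\dots,\lambda_n)$ is a basis, normalise it to $(e_1^\ast,\dots,e_n^\ast)$ by \eqref{one}, and write $\lambda_0=\sum_i c_i e_i^\ast$. If some $c_j=0$ then \eqref{five} kills the term obtained by deleting $\lambda_j$ and, via the multilinearity of the Steinberg symbol, the surviving identity factors through the $(n-1)$-dimensional instance of \eqref{four}, which holds by induction. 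If all $c_i\ne0$ we rescale the $e_i^\ast$ (using \eqref{two}, which leaves every term unchanged) to arrange $\lambda_0=e_1^\ast+\cdots+e_n^\ast$; evaluating the resulting identity on the spanning test functions $[a+r\Z^n]\otimes[b+r\Z^n]$ of Definition \ref{glnm} turns \eqref{four} into an identity among Steinberg symbols $\{g_{P_{i_1}},\dots,g_{P_{i_{n-1}}}\}$ of Siegel units attached to torsion points $P_i\in\frac1N\Z^2/\Z^2$ with $P_0=\sum_{i=1}^n P_i$. Peeling off one functional by the bilinearity of the symbol expresses this identity in terms of the $(n-1)$-dimensional relation (true by induction) and correction terms governed by the $2$- and $3$-term Manin relations of Proposition \ref{wdefined} (cupped up into $\tilde K^M_n$), and assembling these proves \eqref{four} for $n$.

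The main obstacle is axiom \eqref{four}: everything else is formal manipulation with the functoriality of $\cS(-)$ and $\Dist(-,M)$ and with the elementary identities for the Steinberg symbol. By contrast, \eqref{four} is \emph{not} a consequence of the Milnor-$K$ relations alone — the Siegel units satisfy no multiplicative relation reflecting a linear relation among the associated torsion points (indeed, were $\ba\mapsto g_\ba$ additive the alternating sum would collapse with the wrong coefficient) — so its proof must feed in the $n=2$ Manin relations, and the real work is the combinatorial bookkeeping that reduces the $(n+1)$-term alternating sum, step by step, to instances of the $2$- and $3$-term relations of Proposition \ref{wdefined} together with the $(n-1)$-dimensional coboundary relation.
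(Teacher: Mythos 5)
Your plan matches the paper's proof in all essentials: axioms \eqref{one}, \eqref{two}, \eqref{three}, \eqref{five} are verified formally from the functoriality of $\cS$ and $\Dist$ and the multilinearity/antisymmetry of the Steinberg symbol, and the cocycle relation \eqref{four} is reduced, via evaluation on the spanning test functions $[a+rL]\otimes[b+rL]$ and normalization by the first four axioms, to a pure identity in $\tilde K^M_n(k_\infty)$ among Siegel units attached to $N$-torsion points summing to zero (the paper's Proposition \ref{cocyclep}), which the paper then establishes by induction on $n$ with the $n=2$ Manin relation of Theorem \ref{sres} as the one genuinely nontrivial input. The only place your sketch compresses the argument is the inductive step for that Siegel-unit identity: naively ``peeling off'' $g_{\ba_n}$ does not work, since the remaining $\ba_0,\ba_1,\dots,\ba_{n-1}$ no longer satisfy the required linear relation; the paper's trick is to introduce the auxiliary torsion point $\ba_0':=\ba_0-\ba_n$, apply the inductive hypothesis to $\ba_0',\ba_1,\dots,\ba_{n-1}$, cup the resulting vanishing sum with $g_{\ba_n}$ and separately with $g_{\ba_0}$, and subtract, at which point the $3$-term Manin relation for $(\ba_0,\ba_0',\ba_n)$ is exactly the ``correction term'' you allude to. With that step made precise your outline is the paper's proof.
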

The above theorem reduces to Theorem \ref{glnmodularsymbol} once one fixes an isomorphism between $(V^*, \ulambda)$ and $((\Q^n)^*, \{e_1^*, \cdots, e_n^*\})$:
\be
\phi(\ulambda)_\ast \left( \xi_V(\ulambda)\right)  = \bm\xi_n (e_1^*, \cdots, e_n^*)=\bm\mu^n, \quad \ulambda \in \cB,
\ee
using the $\Q$-linear isomorphism $\phi(\ulambda): V\times V\xrightarrow{\simeq} M_{n\times 2}(\Q)$ induced from $\ulambda$.

\begin{proposition}\label{cgcom}
The statement for $n=2$ of Theorem \ref{glnmodularsymbol} is equivalent to Theorem \ref{modularsymbol}.
\end{proposition}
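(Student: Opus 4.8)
The plan is to show that Theorem~\ref{modularsymbol} and the $n=2$ case of Theorem~\ref{glnmodularsymbol} are literally restatements of one another, by exhibiting a dictionary between the two ``index sets'' $\Delta_0=\operatorname{Div}^0(\BP^1(\Q))$ and $\bigl((\Q^2)^\ast\bigr)^2$ under which the classical modular-symbol condition of Theorem~\ref{modularsymbol} matches the Ash--Rudolph axioms \eqref{one}--\eqref{five} of Definition~\ref{AR}. Both theorems assert existence and uniqueness of an object valued in the \emph{same} group $\Dist(M_{2\times2}(\Q),\tilde K^M_2(k_\infty))$, normalized at the \emph{same} value $\bm\mu^2$; so it suffices to produce mutually inverse correspondences between the two kinds of data that respect the defining conditions and the normalization, and then uniqueness (and existence) transfer automatically.

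First I would set up the dictionary. To a nonzero functional $\lambda\in(\Q^2)^\ast$ attach the point $P(\lambda)\in\BP^1(\Q)$ cut out by the line $\ker\lambda\subset\Q^2$; the conventions force $P(e_1^\ast)=0$ and $P(e_2^\ast)=\infty$, since $\ker e_1^\ast=\Q e_2$ and $\ker e_2^\ast=\Q e_1$ are the lines corresponding to the cusps $0$ and $\infty$. The three facts to record are: $P(t\lambda)=P(\lambda)$ for $t\in\Q^\times$; $P(\gamma^\ast\lambda)=\gamma^{-1}\!\cdot P(\lambda)$, because $\ker(\gamma^\ast\lambda)=\gamma^{-1}\ker\lambda$ and $\GL_2(\Q)$ acts on $\BP^1(\Q)$ through lines in $\Q^2$; and $\lambda_1,\lambda_2$ are linearly independent exactly when $P(\lambda_1)\neq P(\lambda_2)$. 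The dictionary then sends a pair $(\lambda_1,\lambda_2)$ to the divisor $(P(\lambda_1))-(P(\lambda_2))\in\Delta_0$, the zero divisor if $\lambda_1,\lambda_2$ are dependent; in particular $(e_1^\ast,e_2^\ast)\mapsto(0)-(\infty)$, which matches the two normalizations. I would also check that the two $\GL_2(\Q)$-actions on the common target agree: the action of Theorem~\ref{modularsymbol} (namely $\alpha^\ast\delta(\phi)=\delta((\alpha^{-1})^\ast\phi)$ with $\alpha^\ast\phi(A)=\phi(\alpha A)$) coincides, under the identification $V\times V\cong M_{2\times2}(\Q)$ furnished by $\phi(e_1^\ast,e_2^\ast)$, with the action induced via \eqref{indcov}--\eqref{taction} from the diagonal $G$-action on $V\times V$; this is a short unwinding, using that left multiplication $A\mapsto\alpha A$ operates column-by-column.

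Next I would translate the axioms. Given $\bm\xi$ as in Theorem~\ref{modularsymbol}, set $\bm\xi_2(\lambda_1,\lambda_2):=\bm\xi\bigl((P(\lambda_1))-(P(\lambda_2))\bigr)$ for independent $\lambda_i$ and $\bm\xi_2(\ulambda):=0$ otherwise. Then \eqref{five} is the definition, \eqref{two} is $P(t\lambda)=P(\lambda)$, the transposition case of \eqref{three} is the identity $(P_1)-(P_2)=-\bigl((P_2)-(P_1)\bigr)$ in $\Delta_0$, and \eqref{four} in the case $n+1=3$ reads $\bm\xi((P_1)-(P_2))-\bm\xi((P_0)-(P_2))+\bm\xi((P_0)-(P_1))=0$ with $P_i=P(\lambda_i)$, which holds because those three divisors sum to zero and $\bm\xi$ is a homomorphism; finally \eqref{one} follows from $P(\gamma^\ast\lambda)=\gamma^{-1}\!\cdot P(\lambda)$, the rewritten modular-symbol identity $\bm\xi(\gamma^{-1}D)=\gamma^\ast\bm\xi(D)$, and the agreement of the target actions. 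Conversely, given $\bm\xi_2$ satisfying Definition~\ref{AR}, I would use freeness of $\Z[\BP^1(\Q)]$ to define a homomorphism $\bm\xi\colon\Delta_0\to\Dist(M_{2\times2}(\Q),\tilde K^M_2(k_\infty))$ on the generators $(P)-(\infty)$ by $\bm\xi((P)-(\infty)):=\bm\xi_2(\lambda',\lambda_\infty)$ with $P(\lambda_\infty)=\infty$, $P(\lambda')=P$ (well-defined by \eqref{two}), then use \eqref{two}, \eqref{three} and \eqref{four} to show $\bm\xi\bigl((P(\lambda_1))-(P(\lambda_2))\bigr)=\bm\xi_2(\lambda_1,\lambda_2)$ for \emph{all} pairs, not just those through $\infty$, and use \eqref{one} to get the modular-symbol condition. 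These two constructions are mutually inverse and preserve the normalization, so the existence and uniqueness assertions of Theorem~\ref{modularsymbol} and of the $n=2$ case of Theorem~\ref{glnmodularsymbol} are equivalent, which is the content of Proposition~\ref{cgcom}.

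The main obstacle is not conceptual but book-keeping: one must fix, once and for all, compatible conventions for (a) the orientation in the divisor $(P(\lambda_1))-(P(\lambda_2))$ versus the ``path'' notation $P(\lambda_1)\to P(\lambda_2)$ and the sign in $\bm\xi((0)-(\infty))=\bm\mu^2$; (b) the action of $\GL_2(\Q)$ on $\BP^1(\Q)$ induced by the contravariant action $\gamma^\ast$ on functionals, which is the \emph{inverse} of the naive one and accounts for the $\gamma^{-1}$ above; and (c) the identification of the two $\GL_2(\Q)$-actions on $\Dist(M_{2\times2}(\Q),\tilde K^M_2(k_\infty))$. Once these are pinned down consistently, each of the five axiom checks is a one-line verification, so the only place the argument could actually break is a mismatch of conventions fixed earlier in the paper.
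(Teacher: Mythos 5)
Your proposal is correct and takes essentially the same route as the paper: identify a pair $\ulambda=(\lambda_1,\lambda_2)$ with the degree-zero divisor $(P(\lambda_1))-(P(\lambda_2))$ where $P(\lambda)$ is the point of $\BP^1(\Q)$ cut out by $\ker\lambda$ (the paper phrases this via the slope function), verify this dictionary is $\GL_2(\Q)$-equivariant, matches the normalizations, and turns the Ash--Rudolph axioms into the Manin relations, then go back via the presentation of $\Delta_0$. Your write-up is in fact somewhat more explicit than the paper's (which dispatches most of this with ``Unravelling the definitions, we have \eqref{rln}''), particularly in verifying the agreement of the two target actions; the only minor discrepancy is that the paper's converse invokes $\Delta_0$ as a cyclic $\Z[\GL_2(\Q)]$-module modulo Manin relations, whereas you work directly with generators $(P)-(\infty)$ of $\Delta_0$ as an abelian group, but both are standard reformulations of the same fact.
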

\begin{proof}
Define the slope of a line (one-dimensional subspace) $\ell$ of $\Q^2$ to be
\be
\mathrm{slope}(\ell):=\left\{\frac{x}{y}\in \Q \sqcup \{\infty\} : (x,y) \in \ell \subset \Q^2 \right\}.
\ee
We identify
\be
\BP^1(\Q):=\{ \ell \subset \Q^2 :  \text{ $\ell$ is a line of $\Q^2$} \}
 &\xrightarrow{\sim}&   \Q \sqcup \{\infty\}\\
\ell &\mapsto& \mathrm{slope}(\ell).
\ee
Note that $\GL_2(\Q)$ acts on $\Q \sqcup \{\infty\}$:
\be
\begin{pmatrix}
a & b\\
c& d
\end{pmatrix} 
\cdot z = \frac{az+b}{cz+d}, \quad z \in \Q \sqcup \{\infty\}, \begin{pmatrix}
a & b\\
c& d
\end{pmatrix}  \in \GL_2(\Q).
\ee
Let $\cB_2$ be the set of ordered basis of $\Hom(\Q^2,\Q)$.
Then $(\Q^\times)^2$ acts on $\cB_2$:
\be
(x_1,x_2)^*(\lambda_1,\lambda_2)= (x_1^*\lambda_1, x_2^*\lambda_2),
\quad (x^* \lambda)(v)=\lambda(x\cdot v)
\ee
where $x_1, x_2,x \in \Q^\times$ and $(\lambda_1,\lambda_2) \in \cB_2$.
Here $[(\lambda_1,\lambda_2)]$ denotes the equivalence class of $(\lambda_1,\lambda_2)$ under the $(\Q^\times)^2$-action.
Then the assignment
\bea \label{assingn}
(\lambda_1,\lambda_2) \mapsto (\mathrm{slope}(\ker(\lambda_1))-\mathrm{slope}(\ker(\lambda_2)))
\eea
induces a map 
$$
\nu: \cB_2/(\Q^\times)^2 \to \Delta_0=\mathrm{Div}^0(\Q\sqcup \{\infty\})) 
$$
such that  $\nu (\g^* [(\lambda_1, \lambda_2)]) = \g^{-1}\cdot \nu([(\lambda_1,\lambda_2)])$ for $\g \in \GL_2(\Q)$. Note that if $(\lambda_1, \lambda_2)$ is not in $\cB_2$, $(\lambda_1, \lambda_2)$ goes to zero under $\nu$.
Unravelling the definitions, we have
\bea \label{rln}
 \bm\xi_2([\lambda_1, \lambda_2]) = (\phi(\ulambda)_\ast)^{-1} \bm\xi(\nu([\lambda_1, \lambda_2])).
\eea
The Manin relations for $\bm\xi$ correspond to \eqref{three} and \eqref{four} of $\bm\xi_2$. Thus we can construct $\bm\xi_2$ from $\bm\xi$.
Conversely, we can reverse the process to construct $\bm\xi$ from $\bm\xi_2$ by using the fact that $\Delta_0$ is generated by $(\infty \to 0)$ as a left $\Z[\GL_2(\Q)]$-module modulo the Manin relations.
Thus it follows from the uniqueness of both constructions that $\bm\xi_2$ is equivalent to $\bm\xi$.
\end{proof}

\subsection{$G$-equivariance}

We will prove \eqref{one}.
Since $\phi(\gamma^\ast \ulambda) = \phi(\ulambda \circ \gamma)$,
we have 
\bea \label{middle}
\phi(\gamma^\ast \ulambda)^{-1} = \gamma^{-1} \circ \phi(\ulambda)^{-1}: M_{n\times 2}(\Q) \to V \times V.
\eea
If $\ulambda \notin\cB$, the $G$-equivariance is obvious.
For $\ulambda \in \cB$, the following computation gives \eqref{one}:
\be 
\xi_V({\gamma^\ast \ulambda}) (f)  &=& \left( (\phi(\gamma^\ast \ulambda)^{-1} )_\ast ({\bm\mu}^n) \right) (f) \quad \text{by} \quad \eqref{keydef}\\
&=& {\bm\mu}^n (f \circ \phi(\gamma^\ast \ulambda)^{-1})  \quad \text{by} \quad \eqref{test}, \eqref{indcov}\\
&=& {\bm\mu}^n ( \left(f \circ \gamma^{-1}\right) \circ \phi(\ulambda)^{-1})  \quad \text{by} \quad \eqref{middle}  \\
&=& (\phi(\ulambda)^{-1}_\ast({\bm\mu}^n)) (f \circ \gamma^{-1}) \quad \text{by} \quad  \eqref{test}, \eqref{indcov},\eqref{keydef} \\
 &=&\left( \gamma^\ast \left( \xi_V(\ulambda)\right)\right)(f) \quad \text{by} \quad \eqref{taction}.
\ee

\subsection{Other basic properties of $\xi_V$}
For the proof of \eqref{two}, observe that \eqref{keydef} implies, for $a,b \in V$, $t_1, \ldots, t_n \in \Q^\times$ and $\ulambda \in \cB$, 
\be
\xi_\ulambda ([a+rL_{t_1\lambda_1, \ldots, t_n \lambda_n}]\times [b+rL_{t_1\lambda_1, \ldots, t_n \lambda_n}])=\{g_{\lambda_1(at_1/r), \lambda_1(bt_1/r)}, \ldots,g_{\lambda_n(at_n/r), \lambda_n(bt_n/r)}  \} 
\ee
where $
L_{t_1\lambda_1, \ldots, t_n \lambda_n}=\{x \in V : t_i\lambda_i(x) \in \Z, \quad i=1, \ldots,n \}.$
Since \eqref{keydef} also implies that
\be
\xi_{t_1\lambda_1, \ldots, t_n \lambda_n} ([a+rL_{t_1\lambda_1, \ldots, t_n \lambda_n}]\times [b+rL_{t_1\lambda_1, \ldots, t_n \lambda_n}])=\{g_{\lambda_1(at_1/r), \lambda_1(bt_1/r)}, \ldots,g_{\lambda_n(at_n/r), \lambda_n(bt_n/r)}  \},
\ee
\eqref{two} is proved.

\noindent Equation \eqref{three} follows from the antisymmetry of the Milnor $K_n$-group, and  \eqref{five} follows from the  definition of $\xi_V$.

\subsection{The cocycle relation}

{

\begin{proposition}\label{cocyclep}
Let $\ba_j \in A(N)-(0,0)$ for $j=0, \ldots, n$.
If $\ba_0=\ba_1 + \cdots + \ba_n$, then
\bea \label{cocycle}
\sum_{i=0}^n(-1)^i \{g_{\ba_0}, \ldots, \widehat{g_{\ba_i}}, \ldots, g_{\ba_n}  \} =0
\eea
in $\tilde K^M_n(k_\infty)$. 
\end{proposition}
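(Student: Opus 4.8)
The plan is to induct on $n$, with Theorem~\ref{sres} (equivalently, the case $n=2$) as the only non-formal input. Two preliminary remarks organize the bookkeeping. First, $g_{-\mathbf{a}}/g_{\mathbf{a}}$ is a root of unity by Definition~\ref{Siegelunits}, hence torsion in $k_\infty^\times$, so $g_{-\mathbf{a}}=g_{\mathbf{a}}$ already in $\tilde K^M_1(k_\infty)=k_\infty^\times\otimes_\Z\Q$; consequently Theorem~\ref{sres} may be recast as follows: whenever $\mathbf{u},\mathbf{v},\mathbf{w}\in A(N)-(0,0)$ satisfy $\mathbf{u}=\mathbf{v}+\mathbf{w}$, then
\[
\{\,g_{\mathbf{u}}g_{\mathbf{v}}^{-1},\ g_{\mathbf{w}}\,\}=\{\,g_{\mathbf{u}},g_{\mathbf{v}}\,\}\qquad\text{in }\tilde K^M_2(k_\infty),
\]
where $g_{\mathbf{u}}g_{\mathbf{v}}^{-1}$ denotes the difference in the $\Q$-vector space $\tilde K^M_1(k_\infty)$. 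Second, in $\tilde K^M_m(k_\infty)$ with $m\ge 2$ any Steinberg symbol with two equal entries vanishes, because $\{x,x\}=\{x,-1\}$ is $2$-torsion. The base cases are then immediate: for $n=1$ the left side of \eqref{cocycle} is $\{g_{\mathbf{a}_1}\}-\{g_{\mathbf{a}_0}\}=0$ since $\mathbf{a}_0=\mathbf{a}_1$, and for $n=2$ it is exactly Theorem~\ref{sres} after applying $g_{-\mathbf{a}}=g_{\mathbf{a}}$.

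Now let $n\ge 3$ and assume \eqref{cocycle} for $n-1$. I would first observe that the left side of \eqref{cocycle} is antisymmetric in $\mathbf{a}_1,\dots,\mathbf{a}_n$ — the alternating sum of faces is compatible with the antisymmetry of Steinberg symbols — while the hypothesis $\mathbf{a}_0=\mathbf{a}_1+\cdots+\mathbf{a}_n$ is symmetric in $\mathbf{a}_1,\dots,\mathbf{a}_n$; hence the $\mathbf{a}_i$ may be reordered freely. If $\sum_{j\ne k}\mathbf{a}_j=0$ for every $k\in\{1,\dots,n\}$, then $\mathbf{a}_k=\mathbf{a}_0$ for all $k$, and every term of \eqref{cocycle} is an $n$-fold symbol all of whose entries equal $g_{\mathbf{a}_0}$, hence zero. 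Otherwise, reorder so that $\mathbf{c}:=\mathbf{a}_1+\cdots+\mathbf{a}_{n-1}$ is nonzero; then $\mathbf{a}_0=\mathbf{c}+\mathbf{a}_n$ with $\mathbf{a}_0,\mathbf{c},\mathbf{a}_n\in A(N)-(0,0)$.

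Applying the inductive hypothesis to the $(n-1)$-tuple $(\mathbf{c};\mathbf{a}_1,\dots,\mathbf{a}_{n-1})$ and then taking the cup product on the right with $g_{\mathbf{a}_n}$ rewrites the $i=0$ term $\{g_{\mathbf{a}_1},\dots,g_{\mathbf{a}_n}\}$ of \eqref{cocycle} as $\sum_{i=1}^{n-1}(-1)^{i-1}\{g_{\mathbf{c}},g_{\mathbf{a}_1},\dots,\widehat{g_{\mathbf{a}_i}},\dots,g_{\mathbf{a}_{n-1}},g_{\mathbf{a}_n}\}$. Substituting this in and grouping each new $g_{\mathbf{c}}$-term with the matching $g_{\mathbf{a}_0}$-term, the left side of \eqref{cocycle} becomes a sum over $1\le i\le n-1$ of differences of the shape $\{g_{\mathbf{a}_0},\dots,g_{\mathbf{a}_n}\}-\{g_{\mathbf{c}},\dots,g_{\mathbf{a}_n}\}$ with identical middle entries, plus the single term $(-1)^n\{g_{\mathbf{a}_0},g_{\mathbf{a}_1},\dots,g_{\mathbf{a}_{n-1}}\}$. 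In each difference I would use bilinearity in the first slot to collapse it to one symbol with leading entry $g_{\mathbf{a}_0}g_{\mathbf{c}}^{-1}$, move $g_{\mathbf{a}_n}$ next to it at a sign cost of $(-1)^{n-2}=(-1)^n$, and apply the recast Theorem~\ref{sres} to those first two entries, converting the difference into $(-1)^n\{g_{\mathbf{a}_0},g_{\mathbf{c}},g_{\mathbf{a}_1},\dots,\widehat{g_{\mathbf{a}_i}},\dots,g_{\mathbf{a}_{n-1}}\}$. Pulling $\{g_{\mathbf{a}_0}\}$ out of the whole expression leaves $(-1)^n\{g_{\mathbf{a}_0}\}$ times precisely the left side of \eqref{cocycle} for $(\mathbf{c};\mathbf{a}_1,\dots,\mathbf{a}_{n-1})$, which vanishes by the inductive hypothesis; this closes the induction.

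The routine work is the sign accounting and the verification that the alternating face sum really is antisymmetric in $\mathbf{a}_1,\dots,\mathbf{a}_n$. The step that needs genuine care — and which I expect to be the main obstacle to write down cleanly — is the passage between Theorem~\ref{sres}, whose hypothesis is a vanishing sum $\mathbf{a}+\mathbf{b}+\mathbf{c}=0$, and the ``additive'' reformulation used above (with $\mathbf{a}_0=\mathbf{c}+\mathbf{a}_n$), together with correctly isolating the single exceptional configuration $\mathbf{a}_1=\cdots=\mathbf{a}_n=\mathbf{a}_0$, where the inductive splitting is unavailable but every symbol collapses for trivial reasons.
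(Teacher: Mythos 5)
Your proof is correct and takes essentially the same route as the paper's: both induct on $n$ with the $n=2$ Manin relation as the only non-formal input, apply the inductive hypothesis to the auxiliary tuple headed by $\bc = \ba_0 - \ba_n$ (the paper's $\ba_0'$), and use the $K_2$ identity coming from $\ba_0 = \bc + \ba_n$ to close the induction, so that your rewriting of the $i=0$ term and regrouping is algebraically the same as the paper's computation of $A-B$. One genuine improvement in your write-up: you explicitly isolate the degenerate case $\bc = 0$ via reordering (with the residual unreorderable case $\ba_1 = \cdots = \ba_n = \ba_0$, where every summand has a repeated entry and therefore vanishes in $\tilde K^M_n$), whereas the paper's inductive step silently applies the hypothesis to $\ba_0' = \ba_0 - \ba_{m+1}$ without noting that this element could equal $(0,0)$ in $A(N)$, a configuration the proposition's hypotheses exclude.
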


\begin{proof}
The proof goes by induction on $n$. The case $n=1$ is obvious.
The case $n=2$ follows from the Manin relation (Proposition \ref{klem}): if $-\ba_0 + \ba_1 +\ba_2 =0$, then one can easily check that \eqref{cocycle} is equivalent to
Theorem \ref{sres} by using the fact that $\{g_{-\ba_0},g_{\ba_i}\} \equiv \{g_{\ba_0}, g_{\ba_i}\}\equiv - \{g_{\ba_i}, g_{\ba_0} \}$ for $i=1,2$.

Now suppose $m\geq 2$ and the result is true for $n = m$. 
We will prove the result for $n=m+1$. 
Set $\ba_0=\ba_1+\cdots+\ba_{m+1}$, which we rewrite as
 $\ba_0'=\ba_1'+\cdots+ \ba_m'$ where
\be
\ba_r':=
\begin{cases}
\ba_0 -\ba_{m+1}, & \text{ if } r=0 \\
\ba_r, &  \text{ if }  1 \leq r \leq m.
\end{cases}
\ee

Then by the induction hypothesis for $n=m$ we have
\begin{eqnarray}\label{ih}
\sum_{i=0}^m (-1)^i \left\{g_{\ba_0'}, \ldots, \widehat{g_{\ba_i'}}, \ldots, g_{\ba_m'} \right\} =0.
\end{eqnarray}
Multiplying \eqref{ih} by $\{g_{\ba_{m+1}}\}$ and $\{g_{\ba_0}\}$, we get
\begin{eqnarray*}
A&:=&\sum_{i=0}^m (-1)^i \left\{g_{\ba_{m+1}}, g_{\ba_0'}, \ldots, \widehat{g_{\ba_i'}}, \ldots, g_{\ba_m'}  \right\} = 0,\\
B&:=&\sum_{i=0}^m (-1)^i \left\{g_{\ba_0}, g_{\ba_0'}, \ldots, \widehat{g_{\ba_i'}}, \ldots, g_{\ba_m'}  \right\} = 0.
\end{eqnarray*}
Using the fact that  $\ba_0=\ba_0'+\ba_{m+1}$ we obtain
\bea\label{stsymbol}
\{g_{\ba_0'}, g_{\ba_{m+1}}\} - \{g_{\ba_0},g_{\ba_{m+1} }\} + \{g_{\ba_0},g_{ \ba_0'}\} =0.
\eea
We now compute $A-B$:
\be
A-B &=&
\left\{g_{\ba_{m+1}}, g_{\ba_1}, \ldots, g_{\ba_m} \right\}  - \left\{g_{\ba_0}, g_{\ba_1}, \ldots, g_{\ba_m} \right\}\\
&&+ \sum_{i=1}^m (-1)^i \left( \{ g_{\ba_{m+1}}, g_{\ba_0'}\} - \{g_{\ba_0},g_{ \ba_0'}\}\right) \cdot \left\{g_{\ba_1}, \ldots, \widehat{g_{\ba_i}}, \ldots, g_{\ba_{m}}  \right\}
\\
&\stackrel{\eqref{stsymbol}}{=}&\left\{g_{\ba_{m+1}}, g_{\ba_1}, \ldots, g_{\ba_m} \right\} - \left\{g_{\ba_0}, g_{\ba_1}, \ldots, g_{\ba_m} \right\} +\sum_{i=1}^m (-1)^i \left\{g_{\ba_{m+1}}, g_{\ba_0}, \ldots, \widehat{g_{\ba_i}}, \ldots, g_{\ba_{m}}  \right\}\\
&=&(-1)^m  \sum_{i=0}^{m+1} (-1)^i \left\{g_{\ba_0}, \ldots, \widehat{g_{\ba_i}}, \ldots, g_{\ba_{m+1}} \right\}
\ee
Since $A-B=0$, we get the desired assertion for $n=m+1$.
Thus the proposition follows by induction.
\end{proof}

We need to prove \eqref{four} of Theorem \ref{mainthm}. 
\begin{definition}
A set $\{v_1, \ldots, v_r\}$ of vectors in a $\Q$-vector space is said to be in general position if
the condition  
\be
c_1 v_1 + \cdots +c_r v_r = 0, \quad c_1, \ldots, c_r \in \Q
\ee
implies that either $c_1=\cdots =c_r =0$ or $c_1 \cdots c_r \neq 0$.
\end{definition}
This is equivalent to saying that any proper subset of $\{v_1, \ldots, v_r\}$ is linearly independent.
\begin{lemma}\label{flem}
If $\{\l_0, \ldots, \l_n\} \subset V^*$ is in general position and $\l_0=\l_1 +\cdots + \l_n$, then the cocycle relation \eqref{four} holds.
\end{lemma}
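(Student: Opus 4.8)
The plan is to verify the distributional identity \eqref{four} by evaluating both sides on a convenient $\Z$-spanning family of test functions in $\cS(V\times V)=\cS(V)\otimes\cS(V)$. Write $\ulambda^{(i)}:=(\lambda_0,\dots,\widehat{\lambda_i},\dots,\lambda_n)$ for the $i$-th deleted tuple, and set $L:=\{v\in V:\lambda_j(v)\in\Z\text{ for }j=1,\dots,n\}$; since any proper subset of a family in general position is linearly independent, $\{\lambda_1,\dots,\lambda_n\}$ is a basis of $V^\ast$ and $L$ is a lattice in $V$. First I would note that the functions $[a+rL]\otimes[b+rL]$, for $a,b\in V$ and $r\in\N$, span $\cS(V\times V)$ over $\Z$: each $\cS(V)$ is spanned by characteristic functions of lattice cosets, and every lattice in $V$ contains $rL$ for a suitable $r\in\N$, so each such coset function is a finite $\Z$-combination of functions $[a'+rL]$. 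Hence it suffices to show that $\sum_{i=0}^n(-1)^i\xi_{\ulambda^{(i)}}$ kills every $[a+rL]\otimes[b+rL]$.

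The point where the hypothesis $\lambda_0=\lambda_1+\dots+\lambda_n$ enters is the identification $L_{\ulambda^{(i)}}=L$ for all $i=0,\dots,n$, where $L_{\ulambda^{(i)}}$ is the lattice attached to $\ulambda^{(i)}$ in Definition \ref{glnm}. Indeed, each $\ulambda^{(i)}$ is a basis (again by general position), hence lies in $\cB$; and for $i\ge 1$, if $v$ satisfies $\lambda_0(v)\in\Z$ and $\lambda_j(v)\in\Z$ for $j\in\{1,\dots,n\}\setminus\{i\}$, then $\lambda_i(v)=\lambda_0(v)-\sum_{j\ne i}\lambda_j(v)\in\Z$, so $v\in L$; the reverse inclusion and the case $i=0$ are immediate. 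Fixing $a,b\in V$, $r\in\N$, and putting $\ba_j:=(\lambda_j(a)/r,\ \lambda_j(b)/r)+\Z^2\in\Q^2/\Z^2$, we have $\ba_0=\ba_1+\dots+\ba_n$, these all lie in $A(N)$ for a common $N$, and the explicit formula of Definition \ref{glnm} gives
$$
\Big(\textstyle\sum_{i=0}^n(-1)^i\xi_{\ulambda^{(i)}}\Big)\big([a+rL]\otimes[b+rL]\big)=\sum_{i=0}^n(-1)^i\{g_{\ba_0},\dots,\widehat{g_{\ba_i}},\dots,g_{\ba_n}\}.
$$
When every $\ba_j$ is nonzero, this vanishes by Proposition \ref{cocyclep}, which disposes of the main case.

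It remains to handle the degenerate test functions, for which some $\ba_{j_0}$ equals $(0,0)$. Since $g_{(0,0)}=1$, any Steinberg symbol in the alternating sum that still contains the slot $g_{\ba_{j_0}}$ vanishes by multilinearity, so the sum collapses — to $0$ outright if at least two of the $\ba_j$ vanish, and otherwise to a single symbol of the shape $\pm\{g_{\bc_1},\dots,g_{\bc_n}\}$ with $\bc_1+\dots+\bc_n=0$ (directly when $j_0=0$, and when $j_0\ge 1$ after rewriting $g_{\ba_0}$ as $g_{-\ba_0}$, which changes the symbol only by a root-of-unity symbol and hence not at all in $\tilde K^M_n(k_\infty)$, using $g_{-\ba}=-e^{2\pi i a_2}g_\ba$). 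So the degenerate case reduces to the claim that $\{g_{\bc_1},\dots,g_{\bc_n}\}=0$ in $\tilde K^M_n(k_\infty)$ whenever $\bc_1+\dots+\bc_n=0$. This in turn follows from Proposition \ref{cocyclep} applied one level lower to the relation $\bc_1+\dots+\bc_{n-1}=-\bc_n$ and then multiplied on the right by $\{g_{\bc_n}\}$: every correction term produced this way contains either a repeated entry $g_{\bc_n}$ or, via $g_{-\bc_n}=-e^{2\pi i (c_n)_2}g_{\bc_n}$, a root-of-unity entry, and such symbols are torsion, hence zero after $\otimes_\Z\Q$.

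The step I expect to be the main obstacle is organising the reduction so that the evaluation formula of Definition \ref{glnm} applies simultaneously to all $n+1$ deleted tuples — this is precisely what the lattice identification $L_{\ulambda^{(i)}}=L$ provides, and it is the only place where $\lambda_0=\lambda_1+\dots+\lambda_n$ is used — together with the bookkeeping in the degenerate case, where one must check that one always lands on a Steinberg symbol whose indices sum to zero, so that Proposition \ref{cocyclep} (or its one-symbol consequence above) can be invoked.
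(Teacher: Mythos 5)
Your proof is correct and follows the same reduction the paper itself uses: evaluate the alternating sum on the spanning family $[a+rL]\otimes[b+rL]$ of test functions and reduce, via Definition \ref{glnm}, to the purely $K$-theoretic identity of Proposition \ref{cocyclep}. Beyond the paper's terse treatment you supply two clarifications that are genuinely needed. First, you verify explicitly that $L_{\ulambda^{(i)}}$ is the \emph{same} lattice $L$ for every $i$ (using $\l_0=\l_1+\cdots+\l_n$); without this, the $n+1$ distributions $\xi_{\ulambda^{(i)}}$ would be evaluated at different test functions and the termwise comparison would not establish the cocycle relation. Second, and more substantively, you handle the degenerate test functions for which some $\ba_j=(0,0)$: Proposition \ref{cocyclep} as stated excludes this case, and yet such test functions cannot be avoided in any $\Z$-spanning set (take $u=v=0$, for instance). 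Your bookkeeping there is sound — with two or more vanishing $\ba_j$ every symbol in the alternating sum contains $g_{(0,0)}=1$ and dies; with exactly one vanishing $\ba_{j_0}$ a single term survives, its index set sums to zero after replacing $g_{\ba_0}$ by $g_{-\ba_0}$ when $j_0\ge1$, and that lone symbol vanishes by applying Proposition \ref{cocyclep} one degree lower and absorbing the corrections into torsion (repeated slots give $\{x,x\}=\{x,-1\}$, a $2$-torsion element, and $g_{-\ba}/g_\ba$ is a root of unity), which is killed in $\tilde K^M_n(k_\infty)=K^M_n(k_\infty)\otimes_\Z\Q$. This is a legitimate filling of a small gap in the paper's proof rather than a different approach.
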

\begin{proof}
For any subset $I:=\{i_1, \ldots, i_n\}\subset \{0, 1, \ldots, n\}$ of cardinality $n$, let $\l_I:=\{\l_{i_1}, \ldots, \l_{i_n}\} \subset V^*$.
Then $\l_I$ is linearly independent.
For any $u, v \in V$ and $r \in \Q^\times$, we have (see Definition \ref{glnm})
\bea \label{computation}
\xi_{\l_I} \left([u+rL_{\l_I}] \otimes [v+rL_{\l_I}]\right)
=\{g_{\ba_{i_1}}, \ldots, g_{\ba_{i_n}} \}, \quad
\ba_{i}:=(\l_{i}(u)/r,\l_{i}(v)/r) \in A(N), \quad i \in I,
\eea
where we recall $L_{\l_I}=\{ x \in V : \l_{i}(x) \in \Z, \forall i \in I\}$ and $[u+r L_{\l_I}] \otimes [v+r L_{\l_I}] \in \cS(V)\otimes \cS(V)$ is the tensor product of the characteristic functions on the affine lattices $u+r L_{\l_I}$ and $v+r L_{\l_I}$.
Note that $\{ [u+rL_{\l_I}] \otimes [v+rL_{\l_I}] : u, v\in V, r \in \Q^\times\}$ generates $\cS(V)\otimes \cS(V)$ as an abelian group and $\l_0=\l_1 +\cdots + \l_n$ implies that $\ba_0=\ba_1+ \cdots + \ba_n$. Therefore, \eqref{four} reduces to Proposition \ref{cocyclep}.
\end{proof}


We need to prove \eqref{four} when $\{\l_0, \ldots, \l_n\}$ is not in general position. To this end, we need the following lemma.
\begin{lemma}
There exists a subset $J =\{j_0, \ldots, j_k\} \subset \{0, \ldots, n\}$ such that $\l_J=\{\l_{j_0}, \ldots, \l_{j_k} \} \subset V^*$ is in general position and linearly dependent.
\end{lemma}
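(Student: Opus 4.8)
The plan is to recognise the statement as the standard fact from linear algebra (equivalently, matroid theory) that every linearly dependent family of vectors contains a \emph{circuit}, i.e. a minimal linearly dependent subfamily, and that a minimal linearly dependent family is exactly one that is linearly dependent and in general position (by the equivalence recorded immediately after the definition of general position). So the proof will be a one-line dimension count followed by a minimality argument.

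First I would note that the family $\{\l_0,\dots,\l_n\}$ is itself linearly dependent: either one invokes the relation $\l_0=\l_1+\cdots+\l_n$ directly (which exhibits a nontrivial linear relation), or one simply observes that $\dim_\Q V^*=\dim_\Q V=n<n+1$, so any $n+1$ elements of $V^*$ are linearly dependent. Hence the collection
\[
\cF := \bigl\{\, I \subseteq \{0,1,\dots,n\} \ :\ \{\l_i\}_{i\in I}\ \text{is linearly dependent}\,\bigr\}
\]
is nonempty, since it contains $\{0,1,\dots,n\}$. Now choose $J\in\cF$ of minimal cardinality and write $J=\{j_0,\dots,j_k\}$. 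By construction $\l_J=\{\l_{j_0},\dots,\l_{j_k}\}$ is linearly dependent. On the other hand, for every proper subset $J'\subsetneq J$ we have $|J'|<|J|$, so $J'\notin\cF$ by minimality, i.e. $\{\l_i\}_{i\in J'}$ is linearly independent. Thus every proper subset of $\l_J$ is linearly independent, which — by the stated equivalence — means precisely that $\l_J$ is in general position. Therefore $J$ has both required properties, and the lemma follows.

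There is essentially no obstacle here; the only two points worth making explicit are the dimension count (or the use of $\l_0=\l_1+\cdots+\l_n$) that forces the full family to be linearly dependent, and the already-recorded reformulation of "in general position" as "every proper subset is linearly independent", which is what makes the minimal-cardinality choice do the work.
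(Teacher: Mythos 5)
Your proof is correct and takes essentially the same approach as the paper: both observe that the full family is linearly dependent, pick a minimal linearly dependent subfamily, and conclude from minimality that every proper subset is independent, which is the recorded reformulation of general position.
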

\begin{proof}
Let us consider the set $S$ of all subsets $\l_J$ of $\{\l_0, \ldots, \l_n\}$ which are linearly dependent. Then $S$ is a finite non-empty set: $\{\l_0, \ldots, \l_n\}$ is always linearly dependent. Thus there exists a minimal linearly dependent subset $\l_J=\{\l_{j_0}, \ldots, \l_{j_k} \} \subset \{\l_0, \ldots, \l_n\}$. 
Because of the minimality, every proper subset of $ \l_J$ is linearly independent, which implies that $\l_J$ is in general position.
\end{proof}

Using this lemma, we choose a set $\{\l_{j_0}, \ldots, \l_{j_k} \} $ which is linearly dependent and in general position. Note that this set does not have to be unique and $k$ could be zero (this is the case if $0 \in \{\l_1, \ldots, \l_n\}$, and we get $\{\l_{j_0}\}=\{0\}$). By using the homogeneity \eqref{two} and the anti-symmetry \eqref{three}, we may assume that $\{\l_{0}, \ldots, \l_{k}\}$ is in general position and linearly dependent with relation $\l_0=\l_1+ \cdots +\l_k$ (if $k=0$, then $\l_0=0$).

%
%
If $k=n$, then Lemma \ref{flem} implies \eqref{four}.
If $k < n$, then $\{ \l_0,  \ldots, \hat \l_i, \ldots, \l_{n}\}$ is linearly dependent for any $ i \geq k+1$, because $\{\l_0, \ldots, \l_k\}$ is linearly dependent. Thus \eqref{five} says that
\bea \label{firstn}
\xi_{(\l_0, \ldots, \hat \l_i, \ldots, \l_{n})}:=\xi_V(\l_0,  \ldots, \hat \l_i, \ldots, \l_{n} )=0, \quad \forall i \geq k+1.
\eea
Let $\l^{(i)}=\{\l_0,  \ldots, \hat \l_i, \ldots, \l_{n}  \}$. For $i \leq k$, then (the same notation as \eqref{computation})
\bea \label{anothercon}
\xi_{(\l_0,  \ldots, \hat \l_i, \ldots, \l_{n} )}([u+rL_{ \l^{(i)}}]\otimes [v+rL_{ \l^{(i)}} ])=
\begin{cases}
0, & \text{ if } \l^{(i)} \text { is linearly dependent,} \\
\{g_{\ba_{0}}, \ldots, \hat g_{\ba_i}, \ldots, g_{\ba_n} \} &  \text{ otherwise. }
\end{cases}
\eea
Since the vector space generated by $\{\l_0, \ldots, \hat\l_i, \ldots, \l_k\}$ is independent of $i \leq k$, the vector space generated by $\l^{(i)}$ is also independent of $i \leq k$.

Since $\{\l_0, \ldots, \l_k\}$ is in general position and $\l_0=\l_1 +\cdots + \l_k$, a similar argument as in Lemma \ref{flem} implies that
\bea \label{secondn}
\sum_{i=0}^k (-1)^i  \{g_{\ba_{0}}, \ldots, \hat g_{\ba_i}, \ldots, g_{\ba_k} \}=0.
\eea

Finally, we compute
\be
&&\sum_{i=0}^n (-1)^i\xi_{(\lambda_0,\ldots,\hat \lambda_i,\ldots,\lambda_n)} ([u+rL_{ \l^{(i)}}]\otimes [v+rL_{ \l^{(i)}} ])\\
&\stackrel{\eqref{firstn}}{=}&\sum_{i=0}^k (-1)^i \xi_{(\l_0, \ldots, \hat \l_i, \ldots, \l_{n})}([u+rL_{ \l^{(i)}}]\otimes [v+rL_{ \l^{(i)}} ]) \\
&\stackrel{\eqref{anothercon}}{=}&\sum_{i=0}^k (-1)^i  \{g_{\ba_{0}}, \ldots, \hat g_{\ba_i}, \ldots, g_{\ba_n} \} \quad (\text {by using the comment below \eqref{anothercon}} \\
&=&\left( \sum_{i=0}^k (-1)^i  \{g_{\ba_{0}}, \ldots, \hat g_{\ba_i}, \ldots, g_{\ba_k} \}\right)\cdot\{g_{\ba_{k+1}}, \ldots, g_{\ba_n} \} 
\stackrel{\eqref{secondn}}{=}0.
\ee
From this, we conclude \eqref{four}.

\section{Appendix} \label{appendix}
We provide a short explanation on the algebraicity of $\pi_N:\cE_N \to Y_N$ in \eqref{univell}.
\begin{theorem}
    Let $N \geq 3$. 
    \begin{enumerate}[(1)]
            \item There is a smooth affine curve $Y(N)$ over $\C$ such that $Y(N)(\C)$ is isomorphic to the complex manifold $Y_N$, and $Y_N$ is bijective with the set of isomorphism classes of triples $(E,e_1,e_2)$ where $E$ is an elliptic curve over $\C$ and $(e_1,e_2)$ is a pair of sections of $E$ over $\Spec(\C)$ which forms a $\Z/N\Z$-basis of $\Ker(N:E \to E)$ such that the Weil pairing of $e_1$ and $e_2$ is a fixed $N$-th root of unity.
        The smooth compactification $X(N)$ of $Y(N)$ is a smooth projective curve over $\C$ such that  $X(N)(\C)$ is bijective with $Y_N \sqcup \{\Gamma(N)\text{-cusps}\}$.
        \item There is a smooth affine curve $Y(N)$ over $\C$ such that $Y(N)(\C)$ is isomorphic to the complex manifold $Y_N$. 
        The smooth compactification $X(N)$ of $Y(N)$ is a smooth projective curve over $\C$ such that  $X(N)(\C)$ is bijective with $Y_N \sqcup \{\Gamma(N)\text{-cusps}\}$.
    \item   There is an elliptic surface $\overline \cE(N)$ over $X(N)$ (in the sense of \cite[Definition on p 202]{Silverman}) such that $\cE_N:=\tilde \fH/\tilde \Gamma(N)$ is isomorphic to the complex manifold $\cE(N)(\C)$ associated to $\cE(N):=Y(N) \times_{X(N)} \overline \cE(N)$.
    \end{enumerate}
\end{theorem}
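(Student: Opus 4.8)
The plan is to reduce all three assertions to standard facts about modular curves, the Riemann existence theorem (GAGA for curves), representability of the full level-$N$ moduli problem, and the Kodaira--N\'eron construction. First I would treat the curves $Y(N)$ and $X(N)$. For $N\geq 3$ the group $\Gamma(N)$ acts freely and properly discontinuously on $\fH$, so $Y_N=\fH/\Gamma(N)$ is a connected noncompact Riemann surface; adjoining the finitely many $\Gamma(N)$-cusps with the usual local parameters (the parameter $q^{1/N}=e^{2\pi i\tau/N}$ at $\infty$ and its $\Gamma(N)$-translates elsewhere) gives a compact Riemann surface $X_N\supset Y_N$. By the Riemann existence theorem there is a unique smooth projective curve $X(N)$ over $\C$ with $X(N)(\C)\cong X_N$, and then $Y(N):=X(N)\setminus S$, with $S$ the finite cusp set, is a smooth affine curve (the complement of a nonempty finite set of points in a smooth projective curve is affine) with $Y(N)(\C)\cong Y_N$. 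The moduli description in (1) is the classical analytic one: the map $\tau\mapsto(\C/\Lambda_\tau,\ \tau/N+\Lambda_\tau,\ 1/N+\Lambda_\tau)$ with $\Lambda_\tau=\Z\tau+\Z$ realizes $Y_N$ as the set of isomorphism classes of triples $(E,e_1,e_2)$ of the stated type, the Weil pairing of the two marked points being the primitive $N$-th root of unity fixed by this normalization and ordering of the basis; here I would cite Shimura, Diamond--Shurman, or Katz--Mazur, including for the identification $X_N\setminus Y_N=\{\text{cusps}\}$. Statement (2) is just the part of (1) not mentioning the moduli interpretation.

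Next I would produce the universal elliptic curve over $Y(N)$. For $N\geq 3$ the moduli problem ``elliptic curve with full level-$N$ structure of fixed Weil pairing'' is rigid and representable, represented by the curve $Y(N)$ of the previous step, so there is a universal elliptic curve $p\colon\cE(N)\to Y(N)$ — a smooth proper morphism with geometrically connected genus-one fibers and a zero section — by Deligne--Rapoport / Katz--Mazur. To match this with the explicit analytic object I would observe that $\cE(N)(\C)\to Y_N$ is an analytic family of elliptic curves carrying a level-$N$ structure and is universal among such analytic families, while $\cE_N=\tilde\fH/\tilde\Gamma(N)=(\C\times\fH)/(\Z^2\rtimes\Gamma(N))\to Y_N$ is visibly such a family (fiber $\C/\Lambda_\tau$ over $[\tau]$, tautological level structure) and is likewise universal; hence $\cE(N)(\C)\cong\cE_N$ as complex manifolds over $Y_N$. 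One may alternatively argue by GAGA once the fibers and group structure are lined up.

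Finally, for (3) I would pass to the compactification via the Kodaira--N\'eron construction. Let $E(N)$ be the generic fiber of $p$, an elliptic curve over $k_N=\C(Y(N))=\C(X(N))$. By \cite[Chap.\ IV]{Silverman} there is a relatively minimal elliptic surface $\pi\colon\overline\cE(N)\to X(N)$ — a smooth projective surface surjecting onto the smooth projective curve $X(N)$, with generic fiber $E(N)$, a section, and no $(-1)$-curve contained in a fiber — unique up to isomorphism, and this is precisely an elliptic surface in the sense of \cite{Silverman}. Over the open locus $Y(N)\subset X(N)$ the curve $E(N)$ has good reduction at every point, since $p$ is already smooth and proper; there the minimal regular model coincides with the smooth proper model, so $\overline\cE(N)\times_{X(N)}Y(N)\cong\cE(N)$. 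Hence $\cE(N):=Y(N)\times_{X(N)}\overline\cE(N)$ has associated analytic space $\cE(N)(\C)\cong\cE_N=\tilde\fH/\tilde\Gamma(N)$ by the previous step, which is the content of (3).

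I expect the main obstacle to be the identification in the second step of the scheme-theoretic universal family $\cE(N)$ with the concrete analytic quotient $\tilde\fH/\tilde\Gamma(N)$: a clean argument requires carefully aligning the algebraic and analytic moduli functors through GAGA and their universal properties, keeping track of the Weil-pairing normalization. A secondary point needing care is verifying in the third step that the Kodaira--N\'eron surface literally meets Silverman's definition of an elliptic surface and that its restriction over $Y(N)$ recovers $\cE(N)$ and not some fibral blow-up of it — this is exactly where one uses that $\cE(N)\to Y(N)$ is already smooth and proper, so no blow-ups are introduced over $Y(N)$.
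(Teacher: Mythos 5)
Your proposal is correct, but it follows a route that differs from the paper's in two noticeable places, and the comparison is worth recording. For parts (1)--(2) the paper starts on the algebraic side: it takes Kato's moduli curve $\cM(N)$ over $\Q$ (representing the full level-$N$ moduli problem, via Deligne--Rapoport / Katz--Mazur), base-changes to $\C$, observes that $\cM(N)_\C$ is disconnected with components indexed by $(\Z/N\Z)^\times$, and defines $Y(N)$ to be the irreducible component whose $\C$-points are identified with $\fH/\Gamma(N)$ by the explicit map $\tau\mapsto(\C/\Lambda_\tau,\tau/N,1/N)$; $X(N)$ is then the corresponding compactification. You instead go analytic-first: build the compact Riemann surface $X_N$, invoke the Riemann existence theorem to algebraize it to $X(N)$, and set $Y(N)=X(N)\setminus\{\text{cusps}\}$, appealing to representability afterwards only to supply the moduli description. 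Both are valid; the paper's route hands you an arithmetic model (a curve over $\Q(\zeta_N)$) and matches Kato's conventions used elsewhere in the paper, while yours is more self-contained and avoids having to single out a component of a disconnected $\Q$-scheme after base change. For part (3) the paper simply quotes Kodaira's construction of elliptic modular surfaces, in the form given by Shioda, and identifies $\overline\cE(N)$ with Shioda's surface $B(N)$ over $\Delta(N)=X(N)$ and $\cE(N)$ with $B'(N)$, citing Shioda directly for $\cE(N)(\C)\cong\cE_N$. You instead obtain the universal elliptic curve over $Y(N)$ from representability, identify its analytification with $\tilde\fH/\tilde\Gamma(N)$ by comparing universal properties of the algebraic and analytic moduli, and then produce $\overline\cE(N)$ as the relatively minimal (Kodaira--N\'eron) model over $X(N)$, noting that over $Y(N)$ good reduction forces the minimal model to coincide with the smooth proper universal family. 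The two constructions yield the same surface (minimal elliptic fibration with a section is unique), but your argument is more modular, and correctly flags where care is needed: that Silverman's definition of elliptic surface is literally met, and that no fibral blow-ups are introduced over $Y(N)$. The paper sidesteps both concerns by pointing to Shioda's explicit description, which already provides the analytic identification on p.~39 of that reference.
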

\begin{proof}
{\it(1)}. Following \cite[Section 1.1]{Kat}, for $N\geq 3$, let $\cM(N)$ be the modular curve over $\Q$ of level $N$ without cusps, which represents the functor sending $S$ to the set of isomorphism classes of triples $(E, e_1,e_2)$ where $E$ is an elliptic curve over $S$
and $(e_1,e_2)$ is a pair of sections of $E$ over $S$ which forms a $\Z/N\Z$-basis of $\Ker (N : E\to E)$; the representability was proven in \cite{DR}; we also refer to \cite[Proposition 1.10.12, Theorem 4.7.0, Corollary 4.7.1, and Corollary 4.7.2]{KatzMazur} for details on representability.
Then $\cM(N)$ is a smooth irreducible affine curve over $\Q$. The total constant field of $\cM(N)$ (the field of all algebraic numbers in the affine ring $\cO(\cM(N))$) is not $\Q$, but is generated over $\Q$ by a primitive $N$-th root $\zeta_N$ of 1. Let $\overline\cM(N)$ be the smooth compactification of $\cM(N)$. By \cite[Section 1.8]{Kat}, the canonical map
\be
\nu:\fH \to \cM(N)(\C); \tau \mapsto (\C/\Z\tau+\Z,\tau/N\! \! \! \mod \Z\tau+\Z, 1/N\!\! \! \mod\Z\tau+\Z)
\ee
induces an isomorphism
\bea \label{cm}
(\Z/N\Z)^\times \times \fH/\Gamma(N) \xrightarrow{\simeq} \cM(N)(\C).
\eea
Note that the complex algebraic variety $\cM(N)_\C=\cM(N)\otimes_\Q \C$ is not irreducible; let $\cM(N)_\C^0$ be an irreducible component such that $\nu:\fH/\Gamma(N) \xrightarrow{\simeq} \cM(N)_\C^0(\C)$ is an isomorphism. 
Let $\overline\cM(N)_\C^{0}$ be the smooth compactification of $\cM(N)_\C^0$ corresponding to the compactification $\overline\cM(N)$ of $\cM(N)$.
We define
\begin{eqnarray*}
    Y(N):=\cM(N)_\C^0, \quad X(N):=\overline\cM(N)_\C^0.
\end{eqnarray*}
Then $X(N)$ and $Y(N)$ satisfy all the desired properties by the construction.

{\it(2)}. The existence of the elliptic surface $\overline \cE(N)$ over $X(N)$ (in the sense of \cite[Definition on p. 202]{Silverman}) was proved in \cite[Section 8, Theorem 11.5]{Kodaira} by Kodaira; this result was explicitly stated in \cite[Definition 4.1 and above]{Shioda} and \cite[Theorem 5.5]{Shioda}. 
In particular, this means that $\overline \cE(N)$ is an algebraic variety over $\C$.
Note that the smooth compactification $X(N)$ is isomorphic to $\Delta(N)$ in \cite[Theorem 5.5]{Shioda}; see \cite[(4.1) and below]{Shioda}.
Let $\overline \cE(N)$ be the elliptic surface $B(N)$ over $\Delta(N)$ in \cite[Theorem 5.5]{Shioda} and $\cE(N)$ be $B'(N)$ in the proof of \cite[Theorem 5.5]{Shioda}; we have the following cartesian diagram.
\[\begin{tikzcd}
	{\cE(N)}=B'(N) & {\overline \cE(N)=B(N)} \\
	{Y(N)} & {X(N)=\Delta(N).}
	\arrow[" ", from=1-1, to=1-2]
	\arrow["{}"', from=1-1, to=2-1]
	\arrow[from=1-2, to=2-2]
	\arrow[" ", from=2-1, to=2-2]
\end{tikzcd}\]

Then it was shown that $\cE(N)(\C) =B'(N)(\C)$ is isomorphic to $\cE_N:=\tilde \fH/\tilde \Gamma(N)$ in \cite[(5.7) and p. 39]{Shioda}.
\end{proof}


We recall $_{N}\!\Theta(u,\tau)^{12} :=  \Theta(u,\tau)^{12 N^2}/\Theta(Nu,\tau)^{12}$ from Definition \ref{mero} and the function field $F_N=k_N(E(N))$ in Definition \ref{fn}.
    By Corollary \ref{periodic}, $_{N}\!\Theta(u,\tau)^{12}$ is a meromorphic function on $\cE_N\simeq\cE(N)(\C)$.
    \begin{proposition} \label{rationality}
        For $N\geq 3$, the function $_{N}\!\Theta(u,\tau)^{12}$ extends to a meromorphic function on $\overline \cE(N)(\C)$. Consequently, $_{N}\!\Theta(u,\tau)^{12} \in F_N$ and $_{N}\!\Theta_\ba(u,\tau)^{12} \in F_N$ for $\ba \in A(N)$.
    \end{proposition}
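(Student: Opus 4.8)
The plan is to prove that $_{N}\!\Theta(u,\tau)^{12}$ extends to a meromorphic function on the compact complex surface $\overline\cE(N)(\C)$ and then to pass from ``meromorphic'' to ``rational'' by GAGA. By Corollary \ref{periodic}(2) the function $_{N}\!\Theta(u,\tau)^{12}$ is invariant under $\tilde\Gamma=\Z^2\rtimes\SL_2(\Z)$, hence a fortiori under $\tilde\Gamma(N)$, so it descends to a meromorphic function on $\cE_N=\tfH/\tilde\Gamma(N)\simeq\cE(N)(\C)$. Since $\cE(N)=Y(N)\times_{X(N)}\overline\cE(N)$, the complement $\overline\cE(N)(\C)\setminus\cE_N$ is precisely the union of the finitely many fibres of $\overline\cE(N)\to X(N)$ over the cusps of $X(N)$; each such fibre is a N\'eron polygon (a cycle of projective lines), so this complement is an analytic divisor $D$, and its singular locus $A\subset D$ (the nodes of these polygons) is a finite set. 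It remains to: (i) extend $_{N}\!\Theta^{12}$ meromorphically across $D\setminus A$; (ii) deduce a meromorphic extension across $A$, hence across all of $\overline\cE(N)(\C)$; (iii) apply GAGA; and (iv) deduce the statement for $_{N}\!\Theta_\ba^{12}$.

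For (i): since $_{N}\!\Theta^{12}$ is $\SL_2(\Z)$-invariant and every cusp of $X(N)$ is an $\SL_2(\Z)$-translate of the cusp $\infty$, it suffices to work near a point of the fibre over $\infty$ lying in $D\setminus A$. There a neighbourhood of that cuspidal fibre inside $\overline\cE(N)(\C)$ is an open part of the minimal (Kodaira--N\'eron) model of the Tate elliptic curve over the $q_N$-disk, where $q_N=e^{2\pi i\tau/N}$ is the local parameter on $X(N)$ at $\infty$ (so $q=q_N^{N}$) and $t=e^{2\pi i u}$ is the multiplicative coordinate on the torus $\C^\times$; every point of $D\setminus A$ has a neighbourhood analytically isomorphic to an annulus--disk $\{\tfrac12<|t|<2\}\times\{|q_N|<\varepsilon\}$ in which the cuspidal fibre corresponds to $\{q_N=0\}$, the coordinate $t$ on different such charts differing by multiplication by a power of $q$. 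In such a chart the product expansion of $\Theta$ from Definition \ref{mero} gives
\[
_{N}\!\Theta(u,\tau)^{12}=\frac{\Theta(u,\tau)^{12N^2}}{\Theta(Nu,\tau)^{12}}=q^{\,N^2-1}\cdot\frac{(t^{1/2}-t^{-1/2})^{12N^2}}{(t^{N/2}-t^{-N/2})^{12}}\cdot\prod_{n\ge 1}\frac{\bigl((1-q^{n}t)(1-q^{n}t^{-1})\bigr)^{12N^2}}{\bigl((1-q^{n}t^{N})(1-q^{n}t^{-N})\bigr)^{12}},
\]
where $(t^{1/2}-t^{-1/2})^{12N^2}=t^{-6N^2}(t-1)^{12N^2}$ and $(t^{N/2}-t^{-N/2})^{12}=t^{-6N}(t^{N}-1)^{12}$ are Laurent polynomials in $t$ and the infinite product converges to a nowhere-vanishing holomorphic function of $(t,q)$ on the chart. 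Since $q=q_N^{N}$, the right-hand side is manifestly meromorphic (indeed holomorphic when $\tfrac12<|t|<2$) across $q_N=0$, which proves (i).

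By (i) together with the invariance noted above, $_{N}\!\Theta^{12}$ is meromorphic on $\overline\cE(N)(\C)\setminus A$; since $\overline\cE(N)(\C)$ is a smooth complex surface and $A$ has codimension $2$, Levi's extension theorem for meromorphic functions (extension across an analytic subset of codimension $\ge 2$) provides a meromorphic extension to all of $\overline\cE(N)(\C)$, giving (ii). For (iii), $\overline\cE(N)$ is a projective $\C$-variety (an elliptic surface over the projective curve $X(N)$), so by GAGA every meromorphic function on $\overline\cE(N)(\C)$ is the analytification of a rational function; hence $_{N}\!\Theta^{12}\in\C(\overline\cE(N))=\C(E(N))=F_N$, the middle equality following from the cartesian square defining $E(N)$ together with $k_N=\C(Y(N))$. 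Finally, for (iv): by \eqref{cdiv}, $_{N}\!\Theta_\ba(u,\tau)^{12}=_{N}\!\Theta(u-a_1\tau-a_2,\tau)^{12}$, i.e.\ it is the pullback of $_{N}\!\Theta^{12}$ under translation by an $N$-torsion section of $\cE(N)/Y(N)$; such a section corresponds to a $k_N$-rational point of the elliptic curve $E(N)$, and translation by that point extends to a birational automorphism of $\overline\cE(N)$ over $X(N)$, inducing an automorphism of $\C(\overline\cE(N))=F_N$, so $_{N}\!\Theta_\ba^{12}\in F_N$. (Alternatively, $_{N}\!\Theta_\ba^{12}$ is $\tilde\Gamma(N)$-invariant by Corollary \ref{periodic}(3), hence meromorphic on $\cE_N$, and steps (i)--(iii) apply verbatim with $t$ replaced by $t\,q^{-a_1}e^{-2\pi i a_2}$.)

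The main obstacle is step (i): one must correctly match the complex-analytic description near the cusps --- the $q$-expansion of $\Theta^{12}$ --- with the algebraic local model of the elliptic surface $\overline\cE(N)$ near its bad fibres (the minimal model of the Tate curve, with fibres of type $I_N$), so as to be certain that no essential singularity is introduced along the cuspidal divisor. Steps (ii)--(iv) are then formal, resting on Levi's extension theorem, GAGA, and the functoriality of function fields under birational maps.
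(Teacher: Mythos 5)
Your proof is correct and follows essentially the same route as the paper: exhibit the $q$-expansion of $_{N}\!\Theta^{12}$ in the Kodaira--Shioda chart $(q_N,t)$ near a N\'eron $N$-gon fibre, observe that the formula is visibly meromorphic across $q_N=0$, and then invoke GAGA. The one place you go genuinely beyond the paper is the treatment of the nodes: the paper works only on $C_v^\#$ (the smooth locus of the bad fibre, covered by the charts $W_i$) and asserts the extension to all of $\overline\cE_N$ without comment, whereas you explicitly note that the nodes form a finite set of codimension $2$ and cite Levi's extension theorem to push the meromorphic function across them. That is a small but real gap-fill, since a meromorphic function on a surface minus a finite set does not automatically extend without such a theorem. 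You are also more explicit than the paper about the reduction to the cusp at $\infty$ via $\SL_2(\Z)$-invariance and about deducing $_{N}\!\Theta_\ba^{12}\in F_N$ from $_{N}\!\Theta^{12}\in F_N$ (translation by a torsion section), both of which the paper leaves implicit. One minor slip: the parenthetical ``indeed holomorphic when $\tfrac12<|t|<2$'' is false for $N\ge 3$, since the factor $(t^{N/2}-t^{-N/2})^{-12}$ has poles at the nontrivial $N$-th roots of unity, which lie on $|t|=1$; this matches the fact that $_{N}\!\Theta^{12}$ has divisor $12(N^2 D_{\mathbf 0}-\sum_{\bx}D_\bx)$ and so does have pole components meeting the cuspidal fibre. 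The argument only needs meromorphy, so nothing is broken, but the aside should be struck.
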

    \begin{proof}
    Let $\overline \pi_N:\overline \cE_N \to X_N$ be the universal generalized elliptic curve over $X_N$, where $\overline \cE_N=\overline \cE(N)(\C)$ and $X_N=X(N)(\C)$. 
    Then $X_N=Y_N \sqcup \{\Gamma(N)-\text{cusps}\}$.
    For a $\Gamma(N)$-cusp $v$, let $C_v:=\overline \pi_N^{-1}(v)$ be the singular fiber, which is the standard N\'eron $N$-gon by \cite{Kodaira}; see \cite[Proposition 4.2, Theorem 5.2 (iii), and Example 5.4]{Shioda} for details. 
    Following \cite{Shioda} and \cite{Kodaira}, let $\theta_{v,i}$ $(0 \leq i \leq N-1)$ be the irreducible components of the divisor $C_v$ and take $\theta_{v,0}$ to be the unique component of $C_v$ containing the identity $o(v)$. 
    Let $C_v^\#$ denote the set of points of multiplicity one on $C_v$, which is shown to be a commutative algebraic group with identity $o(v)$ by \cite[Section 9]{Kodaira}; $\theta_{v,0}^\# =\theta_{v,0}\cap C_v^\#$ is the connected component of the identity. 
    Then $\theta_{v,0}^\#$ is a multiplicative subgroup and $C_v^\#/\theta_{v,0}^\# \simeq \Z/N\Z$.
    
    In order to show the meromorphicity of $_{N}\!\Theta(u,\tau)^{12}$ around $C_v$, we use the local chart of $\overline \cE_N$ around the singular fiber $C_v^\#$, given in the proof of \cite[Theorem 5.5]{Shioda}. We may assume that $v$ is the cusp at infinity $v_0$ without loss of generality, and we use the coordinate on $C_v^\#$ in \cite[p. 39]{Shioda}
    \begin{eqnarray*}
        q_N=e^{2 \pi i \tau/N}, \quad t= e^{2 \pi i u}, \quad (u, \tau) \in \C \times \fH;
    \end{eqnarray*}
    $C_v^\#$ is covered by $N$ open sets $W_i \quad (0 \leq i \leq N-1)$ of $\overline \cE_N$ with coordinates $((q_N, t))_i \quad (0 \leq i \leq N-1)$ (cf. \cite[p. 597-600]{Kodaira}). 
    Recall the following formula
        \begin{eqnarray*}
           _{N}\!\Theta(u,\tau)^{12}=q^{N^2-1} \frac{(t^{1/2} - t^{-1/2})^{12N^2} \prod_{n=1}^\infty \left(1 - q^nt\right)^{12N^2} \left(1 - q^nt^{-1}\right)^{12N^2}}{ (t^{N/2} - t^{-N/2})^{12} \prod_{n=1}^\infty \left(1 - q^n t^{N}\right)^{12}\left(1 - q^nt^{-N}\right)^{12}}
        \end{eqnarray*}
        for $t=e^{2 \pi i u}$ and $q=e^{2 \pi i \tau}$ and $(u,\tau) \in \C\times \fH$. 
         From this formula and the coordinate chart, it is clear that $_{N}\!\Theta(u,\tau)^{12}$ is well defined on $C_v$ where $[\tau] \to [i \infty]=v$ and so $q_N \to 0$, whereas $t\not\to 0, \infty$. Thus, $_{N}\!\Theta(u,\tau)^{12}$ extends to a meromorphic function on $\overline\cE_N=\cE_N \sqcup \bigsqcup_{v}C_v$. 
    \end{proof}

\end{document}